\definecolor{myteal}{HTML}{004b6f}
\newtheorem{theorem}{Theorem}[section]
\newtheorem{proposition}[theorem]{Proposition}
\newtheorem{corollary}[theorem]{Corollary}
\newtheorem{lemma}[theorem]{Lemma}
\newtheorem{remark}[theorem]{Remark}
\newtheorem{definition}[theorem]{Definition}
\DeclareMathOperator{\Id}{Id}
\DeclareMathOperator{\Ai}{\mathcal{A}}
\DeclareMathOperator{\diff}{\mathrm{d}\!}
\DeclareMathOperator{\e}{\mathbf{e}}
\newcommand{\hatA}{\,\hat{\!\Ai\,}{\!}^n_t}
\DeclarePairedDelimiter\abs{\lvert}{\rvert}%
\newcommand{\mysetminusD}{\hbox{\tikz{\draw[line width=0.6pt,line cap=round] (3pt,0) -- (0,6pt);}}}
\newcommand{\mysetminusT}{\mysetminusD}
\newcommand{\mysetminusS}{\hbox{\tikz{\draw[line width=0.45pt,line cap=round] (2pt,0) -- (0,4pt);}}}
\newcommand{\mysetminusSS}{\hbox{\tikz{\draw[line width=0.4pt,line cap=round] (1.5pt,0) -- (0,3pt);}}}
\renewcommand{\setminus}{\mathbin{\mathchoice{\mysetminusD}{\mysetminusT}{\mysetminusS}{\mysetminusSS}}}
\author[1,2]{Félix Foutel-Rodier}
\author[1,2]{Amaury Lambert}
\author[1,2]{Emmanuel Schertzer}
\affil[1]{Laboratoire de Probabilités, Statistique et Modélisation
(LPSM), Sorbonne Université,\newline CNRS UMR 8001, Paris, France}
\affil[2]{Centre Interdisciplinaire de Recherche en Biologie (CIRB),
Collège de France,\newline PSL Research University, CNRS UMR 7241, Paris, France}
\title{Kingman's coalescent with erosion}
\begin{document}

\maketitle

\begin{abstract}
    Consider the Markov process taking values in the partitions of
    $\N$ such that each pair of blocks merges at rate one,
    and each integer is eroded, i.e., becomes a singleton block, at rate
    $d$. This is a special case of exchangeable
    fragmentation-coalescence process, called Kingman's coalescent with
    erosion. We provide a new construction of the stationary distribution
    of this process as a sample from a standard flow of bridges. This
    allows us to give a representation of the asymptotic frequencies of
    this stationary distribution in terms of a sequence of hierarchically
    independent diffusions. Moreover, we introduce a new process called
    Kingman's coalescent with immigration, where pairs of blocks coalesce
    at rate one, and new blocks of size one immigrate at rate $d$. By
    coupling Kingman's coalescents with erosion and with immigration, we
    are able to show that the size of a block chosen uniformly at random
    from the stationary distribution of the restriction of Kingman's coalescent with erosion
    to $\Set{1, \dots, n}$ converges to the total progeny of a critical binary branching
    process.
\end{abstract}

\section{Introduction}

    \subsection{Motivation}

In evolutionary biology, speciation refers to the event when two
populations from the same species lose the ability to exchange genetic
material, e.g.\ due to the formation of a new geographic barrier or 
accumulation of genetic incompatibilities. Even
if speciation is usually thought of as irreversible, 
related species can often still exchange genetic material through 
exceptional hybridization, migration events or sudden collapse of a
geographic barrier~\citep{roux_shedding_2016}.
This can lead to the transmission of chunks of DNA between
different species, a phenomenon
known as introgression, which is currently considered as a major evolutionary
force shaping the genomes of groups of related species~\citep{mallet_how_2016}.

Our study of Kingman's coalescent with erosion was first motivated by the
following simple model of speciation incorporating rare migration events,
depicted in \Cref{fig:model}. Consider a set of $N$ monomorphic species,
each harboring a genome of $n$ genes indexed by $\Set{1, \dots, n}$. We
model speciation by assuming that the dynamics of the species is
described by a Moran model: at rate one for each pair of species $(s_1, s_2)$, 
species $s_2$ dies, $s_1$ gives birth to a new species, replicates
its genome and sends it into the daughter species. We also model
introgression by assuming that at rate $d$ for each gene $g \in \Set{1, \dots, n}$ 
and each pair of species $(s_1, s_2)$, $g$ is replicated, the
new copy of $g$ is sent from $s_1$ to $s_2$ and replaces its homolog in
$s_2$.  This assumption is justified by the following view in terms of
individual migrants. Each time a migrant goes from $s_1$ to $s_2$, if
recombination is sufficiently strong, its genome rapidly gets washed out
by that of the resident species due to the frequent backcrosses (crosses
between descendants of the migrant and local residents) so that at most
one gene among $n$ reaches fixation. 
\begin{figure}
    \center
    \includegraphics{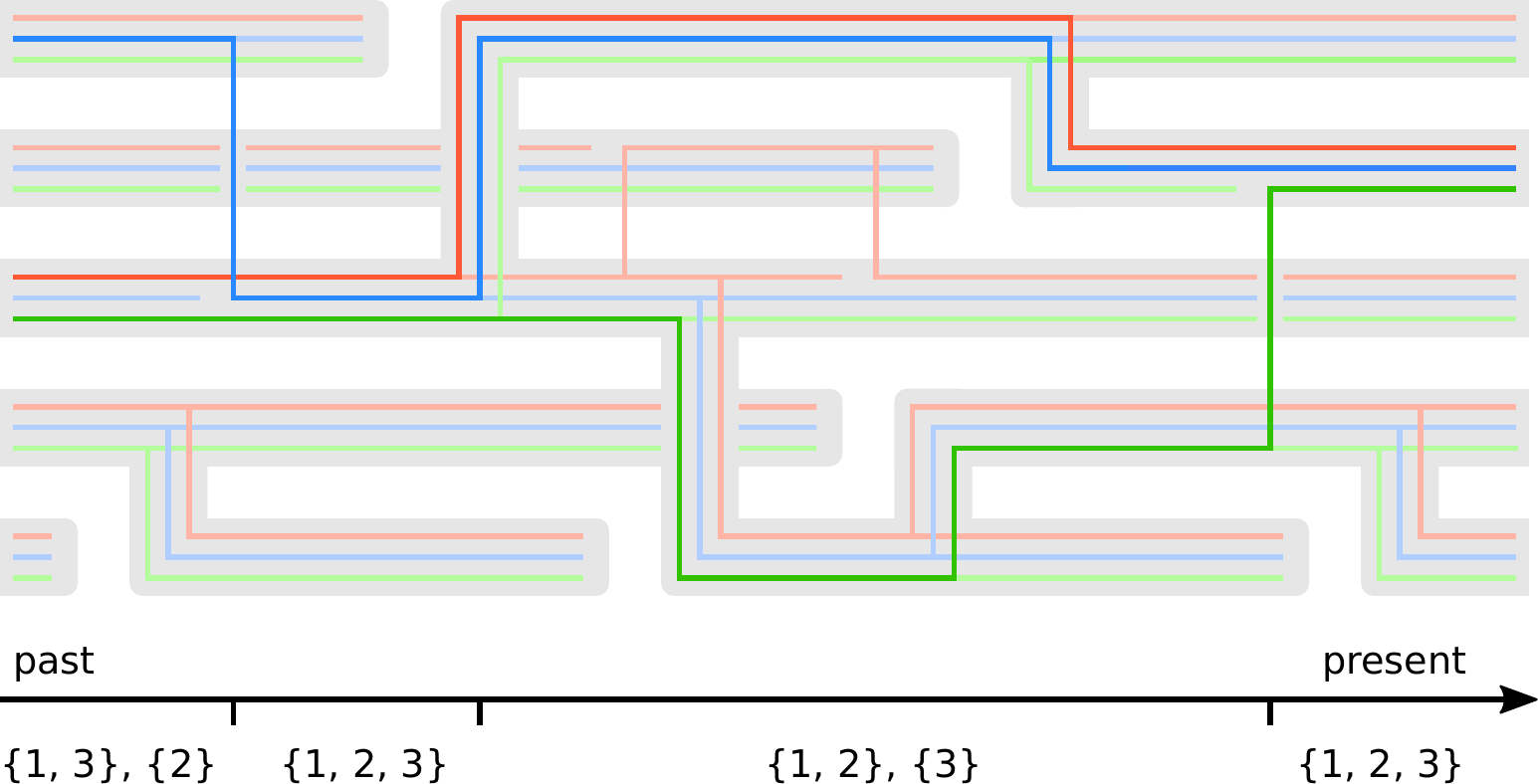}
    \caption{Illustration of the model with $N = 5$ species, represented
        by grey tubes, and $n = 3$ genes, represented by the colored
        lines inside the tubes. A species can split into two, simultaneously replicating its
        genome (speciation). A gene can replicate and move from one species to another and then replace its homologous copy in the recipient species (introgression). At present time
        a randomly chosen species is sampled: the ancestral lineages of its
        genes are represented with bolder colors. The green lineage
        is first subject to an introgression event and jumps to a new
        species. It is then brought back to the same species as the other
        genes by a coalescence event. The corresponding partition-valued
        process obtained by assigning the labels $1$, $2$ and
        $3$ to the red, blue and green gene respectively is given.}
    \label{fig:model}
\end{figure}

Now consider a fixed large time $T$, and sample uniformly one species
at that time. We follow backwards in time the ancestral lineages
of its genes and the ancestral species to which those genes
belong. This induces a process valued in the partitions of $\Set{1,
\dots, n}$ by declaring that $i$ and $j$ are in the same block at time
$t$ if the ancestral lineages of genes $i$ and $j$ sampled at $T$ lied in
the same ancestral species at time $T-t$. 

At first ($t=0$), all genes belong to the same ancestral species.
Eventually this species receives a successful migrant from another
species. Backwards in time, the gene that has been transmitted
during this event is removed from its original species and placed
in the migrant's original species. Such events occur at rate $(N-1)d$ for
each gene, and the migrant species is then chosen uniformly in the population.
Once genes belong to separate species, they can be brought back to the 
same species by coalescence events. Any two species find their
common ancestor at rate one, and at such an event the genes from the 
two merging species are placed back into the same species. 

This informal description 
shows that the partition-valued process has two kinds of transitions:
each pair of blocks merges at rate one; each gene is placed
in a new uniformly chosen species at rate $(N-1)d$.
Setting the introgression rate to $d_N = d/N$ and letting $N\to\infty$,
introgression events occur at rate $d$ for each gene. At each such event the gene is
sent to a new species that does not contain any of the 
other $n-1$ ancestral gene lineages, i.e., it is placed in a singleton
block. This is the description of Kingman's coalescent with erosion, that
we now more formally introduce.

    \subsection{Kingman's coalescent with erosion}

Let $n \ge 1$, we define the $n$-Kingman coalescent with erosion as a
Markov process $(\Pi^n_t)_{t \ge 0}$ taking values in the partitions of
$[n] \defas \Set{1, \dots, n}$. Its transition rates are the following.
Started from a partition $\pi$ of $[n]$, the process jumps to any
partition $\pi'$ obtained by merging two blocks of $\pi$ at rate $1$.
Moreover, at rate $d$ for each $i \le n$, the integer $i$ is ``eroded''.
This means that if $C$ is the block of $\pi$ containing $i$, then the
process jumps to the partition $\pi'$ obtained by replacing the block
$C$ by the blocks $C \setminus \Set{i}$ and $\Set{i}$. (Obviously if 
$C = \Set{i}$, i.e., if $i$ is in a singleton block, no such transition can
occur.)

Kingman's coalescent with erosion is a special case of the more general
class of partition-valued processes called 
\emph{exchangeable fragmentation-coalescence processes}, introduced and 
studied in~\cite{berestycki_exchangeable_2004}. 
These processes are a combination of the well-studied
fragmentation processes, where blocks can only split, and coalescence
processes, where blocks are only allowed to merge. The main new feature
of combining fragmentation and coalescence is that they can balance
each other so that fragmentation-coalescence processes display non-trivial
stationary distributions. In this work we will be interested into
describing the stationary distribution associated to Kingman's 
coalescent with erosion. The following proposition, which is a
direct consequence of Theorem~8 of~\cite{berestycki_exchangeable_2004}, 
provides the existence and uniqueness of this distribution.
\begin{proposition}[\citealt{berestycki_exchangeable_2004}]
    There exists a unique process $(\Pi_t)_{t \ge 0}$ valued in the partitions
    of $\N$ such that for all $n \ge 1$, the restriction of 
    $(\Pi_t)_{t \ge 0}$ to $[n]$ is distributed as the $n$-Kingman
    coalescent with erosion. Moreover, the process 
    $(\Pi_t)_{t \ge 0}$ has a unique stationary
    distribution $\Pi$.
\end{proposition}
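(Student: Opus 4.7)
The plan is to deduce the statement from the general theory of exchangeable fragmentation-coalescence processes developed in \cite{berestycki_exchangeable_2004}. The content is essentially a consistency check together with an identification of our process as a special case of that framework, after which the two assertions follow from results stated there.

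For the existence of $(\Pi_t)_{t\ge 0}$, I would first verify the consistency of the family $(\Pi^n_t)_{n\ge 1}$: if $\rho_n$ denotes the projection from partitions of $[n+1]$ onto partitions of $[n]$ obtained by intersecting each block with $[n]$, and if $\Pi^{n+1}_0$ projects to $\Pi^n_0$, then $(\rho_n(\Pi^{n+1}_t))_{t\ge 0}$ has the law of $(\Pi^n_t)_{t\ge 0}$. This is checked at the level of transition rates: a coalescence in $[n+1]$ involving two indices of $[n]$ projects to the same merger in $[n]$ at rate $1$, while a coalescence involving $n+1$ only modifies the block containing $n+1$ and leaves the trace unchanged; similarly, the erosion of an integer $i\le n$ projects to an erosion in $[n]$ at rate $d$, while the erosion of $n+1$ has no effect on the restriction. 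From consistency one obtains a process $(\Pi_t)_{t\ge 0}$ on the partitions of $\N$ by a Kolmogorov-type extension applied to cylinder sets, and uniqueness follows since the law of $(\Pi_t)$ is determined by the laws of its restrictions to $[n]$ for all $n \ge 1$.

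To obtain the stationary distribution $\Pi$, I would identify $(\Pi_t)$ as a particular exchangeable fragmentation-coalescence process in the sense of \cite{berestycki_exchangeable_2004}: the coalescence part is Kingman's coalescent, corresponding to the measure $\Lambda = \delta_0$ on $[0,1]$, while the fragmentation part has zero proper splitting measure and erosion coefficient equal to $d$. A direct comparison of the infinitesimal rates shows that the two descriptions coincide. Existence and uniqueness of the stationary distribution is then an immediate application of Theorem~8 of \cite{berestycki_exchangeable_2004}, whose hypothesis is satisfied here because the Kingman coalescent comes down from infinity and hence dominates the bounded erosion pressure, ensuring ergodicity of the process on the space of partitions of $\N$.

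The only step requiring actual work is the consistency verification, where the potential pitfall is simply the careful bookkeeping of which transitions of $\Pi^{n+1}$ project non-trivially onto $\Pi^n$; I expect no serious obstacle, since both assertions of the proposition are either contained in, or direct consequences of, results already available in \cite{berestycki_exchangeable_2004}.
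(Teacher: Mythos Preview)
Your proposal is correct and follows essentially the same approach as the paper, which simply cites the proposition as a direct consequence of Theorem~8 of \cite{berestycki_exchangeable_2004} without giving any further argument. You supply more detail than the paper does (the consistency check and the identification of the coalescence and fragmentation parameters), but the strategy---identify the process as an exchangeable fragmentation-coalescence process with $\Lambda=\delta_0$ and erosion $d$, then invoke Berestycki's results---is the same.
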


Kingman's coalescent with erosion is an exchangeable process
in the sense that for any finite permutation $\sigma$ of $\N$, 
\[
    (\sigma(\Pi_t))_{t \ge 0} \overset{\mathrm{(d)}}{=} (\Pi_t)_{t \ge 0}.
\]
It is then clear that the stationary distribution $\Pi$ is also
an exchangeable partition of $\N$. Exchangeable partitions of $\N$ 
are often studied through what is known as their asymptotic frequencies. Let
$\Pi = (C_1, C_2, \dots)$ be the blocks of the partition $\Pi$.
Then, Kingman's representation theorem \cite[see e.g.][]{bertoin_2006}
shows that for any $i$, the following limit exists a.s.
\[
    \lim_{n \to \infty} \frac{1}{n} \sum_{k = 1}^n \Indic{k \in C_i} =
    f_i.
\]
Let $(\beta_i)_{i \ge 1}$ be the non-increasing reordering of the
sequence $(f_i)_{i \ge 1}$. We call $(\beta_i)_{i \ge 1}$ the 
\emph{asymptotic frequencies} of $\Pi$. The sequence
$(\beta_i)_{i \ge 1}$ is such that
\[
    \beta_1 \ge \beta_2 \ge \dots \ge 0,\quad \sum_{i \ge 1} \beta_i \le 1.
\]
Such sequences are called \emph{mass-partitions}. Mass-partitions are
studied because exchangeable partitions are entirely characterized by
their asymptotic frequencies. The partition
$\Pi$ can be recovered from its asymptotic frequencies $(\beta_i)_{i \ge
1}$ through what is known as 
a \emph{paintbox procedure}. Conditionally on $(\beta_i)_{i \ge 1}$, let 
$(X_i)_{i \ge 1}$ be an independent sequence such that for $k \ge 1$, $\Prob{X_i = k} =
\beta_k$, and $\Prob{X_i = -i} = 1-\sum_{k \ge 1} \beta_k$.
Then the partition $\Pi'$ of $\N$ defined as
\[
    i \sim_{\Pi'} j \iff X_i = X_j
\]
is distributed as $\Pi$ \cite[see e.g.][]{bertoin_2006}. We see
that $i$ is in a singleton block if{f} $X_i = -i$. The set of all
singleton blocks is referred to as the \emph{dust} of $\Pi$, and the partition
$\Pi$ has dust if{f} $\sum_{i \ge 1} \beta_i < 1$.

The main characteristics of the asymptotic
frequencies of fragmentation-coalescence processes have already
been derived in~\cite{berestycki_exchangeable_2004}, 
see Theorem~8. In the case of Kingman's coalescent
with erosion, these results specialize to the following theorem.

\begin{theorem}[\citealt{berestycki_exchangeable_2004}]
    Let $(\beta_i)_{i \ge 1}$ be the asymptotic frequencies of $\Pi$, the
    stationary distribution of Kingman's coalescent with erosion.
    Then
    \[
        \sum_{i \ge 1} \beta_i = 1,\quad \forall i \ge 1,\; \beta_i > 0,\quad a.s.
    \]
    In other words, the partition $\Pi$ has infinitely many blocks, and
    no dust.
\end{theorem}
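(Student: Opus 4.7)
The theorem has two parts: (a) $\sum_i \beta_i = 1$ (no dust), and (b) every $\beta_i$ is strictly positive. The plan is to reduce both to two statements about $\Pi$ via its paintbox representation---(a) is equivalent to $\Prob{\{1\} \in \Pi} = 0$, and given (a), (b) is equivalent to $K(\Pi) = \infty$ almost surely---then to prove (a) by a generator-balance argument on the finite restrictions, and finally to deduce (b) by a PASTA-style argument at the erosion times of integer $1$.

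\emph{Step 1: no dust.} At stationarity, set $K_n = K(\Pi^n)$ and let $S_n$ denote the number of singleton blocks of $\Pi^n$. I will apply the generator of $\Pi^n$ to the statistic $K_n$: coalescence contributes $-\binom{K_n}{2}$ and each of the $n - S_n$ erosions of non-singleton integers contributes $+d$, so setting the expected drift to zero gives
\[
    \E\!\left[\binom{K_n}{2}\right] = d\,\E[n - S_n].
\]
Since each singleton is itself a block, $S_n \le K_n$, hence $\E[K_n^2] \le 2dn + \E[K_n]$, and Jensen's inequality then produces $\E[K_n] = O(\sqrt n)$. A fortiori $\E[S_n] = O(\sqrt n)$, and exchangeability yields $\E[S_n] = n\,\Prob{\{1\} \in \Pi^n}$, so $\Prob{\{1\} \in \Pi^n} = O(1/\sqrt n) \to 0$. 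Because the events $\{\{1\} \in \Pi^n\} = \{B_1 \cap [n] = \{1\}\}$ decrease in $n$ to $\{\{1\} \in \Pi\}$, monotone convergence yields $\Prob{\{1\} \in \Pi} = 0$, i.e.\ (a).

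\emph{Step 2: infinitely many blocks.} Let $Z_t = \Indic{\{1\} \in \Pi_t}$. By stationarity and (a), $\Prob{Z_t = 1} = 0$ for every $t$, so Fubini forces $Z_t = 0$ for Lebesgue-almost every $t$, almost surely. On the other hand $Z$ jumps from $0$ to $1$ at every erosion of integer $1$ occurring while $|B_1| \ge 2$, and under (a) this latter condition holds with probability one, so these jumps arrive at rate $d$. For a càdlàg process to be null almost everywhere while being pushed up at a positive rate, every sojourn of $Z$ at $1$ must have zero length, which forces the exit rate $K(\Pi_{\tau+}) - 1$ at each such erosion time $\tau$ to be infinite. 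By PASTA the Palm distribution of $\Pi_{\tau-}$ coincides with the stationary law of $\Pi$, and therefore $K(\Pi) = \infty$ almost surely, which is (b).

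\emph{Main obstacle.} The delicate point is Step 2, where one has to rigorously translate ``zero sojourn'' into ``infinite exit rate'' in a setting where the coalescence rate can itself be infinite; the cleanest way is probably a Palm-calculus argument on the marked Poisson process of erosions, pushing through the sequence of finite restrictions to control limits. Alternatively, as the excerpt points out, the whole statement is a specialization of Theorem~8 of \cite{berestycki_exchangeable_2004}, which covers general exchangeable fragmentation--coalescence processes and can simply be invoked.
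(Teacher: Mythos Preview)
The paper does not give its own proof of this theorem: it is stated as a specialization of Theorem~8 of \citet{berestycki_exchangeable_2004} and simply invoked. Your closing remark already notes this, and that is the only ``proof'' the paper offers.

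Your Step~1 is a clean, self-contained argument and is correct. The generator balance $\E\big[\binom{K_n}{2}\big] = d\,\E[n-S_n]$ is valid because $\Pi^n$ is a finite-state Markov chain, and the chain of inequalities leading to $\Prob{\{1\}\in\Pi}=0$ is sound. This is more elementary than quoting the general fragmentation--coalescence machinery and is worth keeping.

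Step~2, however, has a genuine gap beyond the one you flag. Your inference ``zero sojourn $\Rightarrow$ $K(\Pi_{\tau+})-1=\infty$'' tacitly assumes that the exit rate from $\{Z=1\}$ is governed by the number of blocks \emph{at the instant} $\tau+$. But the exit rate is the number of other blocks at the \emph{running} time, and that number is not frozen. If $K(\Pi_{\tau+})=K<\infty$, then each of the infinitely many integers $j\neq 1$ is eroded at rate $d$, so in any interval $(\tau,\tau+\epsilon)$ infinitely many new singleton blocks are created; on the $n$-restriction one has $K(\Pi^n_{\tau+s})\approx K+dns$ for small $s$, and hence $\Prob{\sigma_n>t}\lesssim \exp\!\big(-(K-1)t-\tfrac{dn}{2}t^2\big)\to 0$ for every fixed $t>0$. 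Consequently the sojourn of $\{1\}$ as a singleton is zero \emph{regardless} of whether $K(\Pi_{\tau+})$ is finite or infinite, so zero sojourn carries no information about $K(\Pi_{\tau+})$. The PASTA step is fine; it is the implication just before it that fails. To rescue (b) along elementary lines you would need a different observable---for instance, pushing the generator balance further to get a lower bound $K_n\to\infty$ in probability (the paper later proves $M^n/\sqrt{n}\to\sqrt{2d}$, which of course does the job), or simply invoking Berestycki as you suggest.
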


Before stating our main two results, let us motivate them.
Consider a partition $\hat{\Pi}$ obtained from a paintbox procedure on a 
random mass-partition $(\hat{\beta}_i)_{i \ge 1}$, and denote $\hat{\Pi}^n$ 
its restriction to $[n]$. There are two sources of randomness in
$\hat{\Pi}^n$. One originates from the fact that $(\hat{\beta}_i)_{i \ge 1}$
is random. Moreover, conditionally on $(\hat{\beta}_i)_{i \ge 1}$, 
$\hat{\Pi}^n$ is obtained by sampling a finite number of variables
with distribution $(\hat{\beta}_i)_{i \ge 1}$. Thus, in addition to the 
randomness of $(\hat{\beta}_i)_{i \ge 1}$, $\hat{\Pi}^n$ is subject to a 
finite sampling randomness.

Suppose that $\hat{\Pi}$ has finitely many blocks, say $N$, with asymptotic frequencies
$(\hat{\beta}_1, \dots, \hat{\beta}_N)$. When $n$ gets large, the finite sampling
effects vanish and the sizes of the blocks of $\hat{\Pi}^n$ resemble 
$(n \hat{\beta}_1, \dots, n \hat{\beta}_N)$. However, when $\hat{\Pi}$ has infinitely many
non-singleton blocks, 
there always exists a large enough $i$ such that the size of the block
with frequency $\hat{\beta}_i$ remains subject to finite sampling effects in 
$\hat{\Pi}^n$. In
this case it is not entirely straightforward to go from the asymptotic
frequencies $(\hat{\beta}_i)_{i \ge 1}$ to the size of the blocks of
$\hat{\Pi}^n$, as this involves a non-trivial sampling procedure.

In this work our task will be twofold. First, we will investigate the
size of the ``large blocks'' of $\Pi^n$ by describing the distribution
of the asymptotic frequencies $(\beta_i)_{i \ge 1}$. In order to get
an insight into the distribution of the ``small blocks'' of $\Pi^n$, we will
rather study the empirical distribution of the size of the blocks
of $\Pi^n$, for large $n$. Let us now state the corresponding results.

    \subsection{Main results}
    \label{SS:mainResults}

We show two main results in this work. One is concerned with the size
of the large blocks of Kingman's coalescent with erosion, and gives a
representation of its asymptotic frequencies in terms of an infinite
sequence of hierarchically independent diffusions. The other is concerned
with the size of the small blocks and provides the limit of the distribution of the
size of a block chosen uniformly from the stationary partition when
$n$ is large. Let us start with the former result.

\paragraph{Size of the large blocks.}
Let $(Y_i)_{i \ge 1}$ be an i.i.d.\ sequence of diffusions verifying
\[
    \forall i \ge 1,\; \diff Y_i = (1-Y_i) \diff t + \sqrt{Y_i(1-Y_i)}
    \diff W_i,
\]
started from $0$, and where $(W_i)_{i \ge 1}$ are independent Brownian
motions. It is known, see e.g.~\cite{lambert_population_2008}
Proposition~2.3.4, that each $Y_i$ is distributed as a 
Wright-Fisher diffusion conditioned on hitting $1$, and thus
we have
\[
    \forall i \ge 1,\; \lim_{t \to \infty} Y_i(t) = 1 \quad \text{a.s.}
\]
Accordingly, we set $Y_i(\infty) = 1$. We build inductively a sequence of
processes $(Z_i)_{i \ge 1}$ and time-changes $(\tau_i)_{i \ge 1}$ as
follows. Set
\[
    \forall t \ge 0,\; Z_1(t) = Y_1(t), 
    \quad \tau_1(t) = \int_0^t \frac{1}{1-Z_1(s)} \diff s.
\]
Then, suppose that $(Z_1, \dots, Z_i)$ and $(\tau_1, \dots, \tau_i)$ have
been defined, and set
\begin{gather*}
    \forall t \ge 0,\; 
    Z_{i+1}(t) = (1-Z_1(t) - \dots - Z_i(t)) Y_{i+1}(\tau_i(t)),\\
    \forall t \ge 0,\;
    \tau_{i+1}(t) = \int_0^t \frac{1}{1-Z_1(s) -\dots -Z_{i+1}(s)} \diff s.
\end{gather*}
Then we have the following representation of the asymptotic frequencies of
the stationary distribution of Kingman's coalescent with erosion.

\begin{theorem} \label{Thm:frequencies}
    Let $(Z_i)_{i \ge 1}$ be the sequence of diffusions
    defined previously. Then the non-increasing reordering of the sequence
    $(z_i)_{i \ge 1}$ defined as
    \[
        \forall i \ge 1,\; z_i = \int_0^\infty d e^{-dt} Z_i(t) \diff t,
    \]
    is distributed as the frequencies of the blocks of the stationary 
    distribution of Kingman's coalescent with erosion rate $d$.
\end{theorem}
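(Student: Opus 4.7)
My plan is to represent the stationary partition $\Pi$ through a standard Kingman flow of bridges (in the sense of Bertoin--Le Gall) coupled with independent erosion clocks, and then identify the asymptotic frequencies via an age-ordered stick-breaking analysis of that flow.

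\medskip
\noindent\textbf{Step 1: flow construction of the stationary partition.}
I would take a standard flow of Kingman bridges $(B_{s,t})_{-\infty < s < t < \infty}$ on the real line, together with independent sequences $(U_i)_{i \ge 1}$ of iid uniform variables on $[0,1]$ and $(T_i)_{i \ge 1}$ of iid $\mathrm{Exp}(d)$ variables, and define a random partition $\Pi^\star$ of $\mathbb{N}$ by
\[
    i \sim_{\Pi^\star} j \iff B_{-T_i,0}(U_i) = B_{-T_j,0}(U_j).
\]
A direct generator check should show that $\Pi^\star$ has the law of the stationary partition $\Pi$: uniforms colliding in the Kingman flow reproduce the rate-one pairwise coalescences, while the memoryless resampling of $T_i$ at rate $d$ reproduces the erosion of integer $i$.

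\medskip
\noindent\textbf{Step 2: frequencies as integrals over the erosion clock.}
Conditionally on the flow, $\Pi^\star$ is exchangeable, so its frequencies can be read off from a single integer. Fixing a flow-measurable ordering of the blocks (by the age of their most recent common position), and integrating out $(T_1, U_1)$, the asymptotic frequency of the $k$-th block becomes
\[
    \beta_k = \int_0^\infty d\,e^{-dt} Z_k(t) \diff t,
\]
where $Z_k(t)$ denotes the total mass at flow-depth $t$ carried by the $k$-th oldest atom of the bridge $B_{-t,0}$. The exponential weight is precisely the density of the erosion time $T_1$.

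\medskip
\noindent\textbf{Step 3: hierarchical diffusion structure of $(Z_k)$.}
The oldest atom is the lineage of the Kingman flow that, viewed forward in $t$, survives all the way out to $t = \infty$. Its mass $Z_1(t)$ satisfies the Fleming--Viot SDE for a tagged atom under Kingman dynamics, and conditioning on survival (equivalently, on being the ultimate ancestor) is a Doob $h$-transform that yields precisely the Wright--Fisher SDE conditioned on hitting $1$. Hence $Z_1 \stackrel{d}{=} Y_1$. To handle $Z_{k+1}$ given $Z_1, \dots, Z_k$, I would argue that the flow restricted to the complement of the $k$ oldest atoms, with masses renormalised by $1 - Z_1(t) - \dots - Z_k(t)$ and time-changed by $\tau_k$, is an independent standard Kingman flow. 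Applying the previous paragraph inside that renormalised residual flow then yields
\[
    Z_{k+1}(t) = \bigl(1 - Z_1(t) - \dots - Z_k(t)\bigr)\, Y_{k+1}\bigl(\tau_k(t)\bigr),
\]
with $Y_{k+1}$ a Wright--Fisher diffusion conditioned on hitting $1$, independent of $Y_1, \dots, Y_k$. This is exactly the recursion defining the $(Z_k)$ of the theorem.

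\medskip
\noindent\textbf{Main obstacle.}
The hard part is Step 3: rigorously establishing that excising the tagged oldest atom and applying the time change $\tau_k$ produces an \emph{independent} standard Kingman flow, and that the construction can be iterated consistently. The natural technical tool is an SDE description of the mass processes of tagged atoms in a Fleming--Viot flow, combined with a Dubins--Schwarz type time-change argument to produce the independent driving Brownian motions $W_1, W_2, \dots$. An alternative route, which the authors may well prefer, is to work in the dual Fleming--Viot process with immigration (matching the ``Kingman coalescent with immigration'' mentioned in the abstract): its atoms admit a natural birth-ordering by age, and one can read off the age-ordered mass SDEs directly from its generator, which is then identified with the hierarchical recursion defining $(Z_k)$.
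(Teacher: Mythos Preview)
Your outline is essentially the paper's proof. Step~1 is exactly their Corollary~2.6 (with the minor slip that one needs $B^{-1}_{-T_i,0}(U_i)$, not $B_{-T_i,0}(U_i)$), and Step~2 is their Lemma at the start of Section~5, where the ordering you describe is precisely the sequence of \emph{Eves} $(\e_i)_{i\ge 1}$ of the Fleming--Viot process, ranked by extinction time of their progeny.

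The one organizational difference worth flagging is in Step~3. You phrase the key claim at the level of the flow (``excising the oldest atom and time-changing yields an independent standard Kingman flow''), which is a strong measure-valued statement. The paper avoids proving this directly: it instead works entirely with \emph{finite}-dimensional Wright--Fisher diffusions. Concretely, it fixes $n$, takes an $(n{+}1)$-dimensional Wright--Fisher diffusion, conditions on the full extinction order $\{\zeta_{n+1}<\dots<\zeta_1\}$, and shows via a Doob $h$-transform plus an It\^o/Dubins--Schwarz time-change computation (your ``natural technical tool'') that the conditioned diffusion is exactly $(\tilde Z_1,\dots,\tilde Z_{n+1})$ built from the hierarchical recursion. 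The identification with the Eve masses $(\rho_t(\{\e_1\}),\dots,\rho_t(\{\e_n\}))$ is then done by a discrete approximation: partition $[0,1]$ into intervals of width $1/m$, note that the interval masses form an unconditioned Wright--Fisher diffusion, observe that conditioning on which intervals contain $\e_1,\dots,\e_n$ is the same as conditioning on the extinction order, and let $m\to\infty$ using the Feller entrance-from-$0$ property of the $Y_i$. This finite-dimensional detour buys concreteness and sidesteps the need to formulate the excision/independence claim at the flow level, at the cost of an extra limiting argument; your proposed direct flow-level argument would be cleaner if it can be made rigorous, but the paper does not attempt it.
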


Let us explain the intuition behind \Cref{Thm:frequencies}.
Kingman's coalescent is dual to a measure-valued process called the
Fleming-Viot process~\citep{etheridge_2000}. The Fleming-Viot process 
describes the offspring distribution of a 
population with constant size, while Kingman's coalescent gives the
genealogy of that population. By a classical duality argument, 
Kingman's coalescent at time $t$ can be obtained by sampling individuals
at time $t$ from a Fleming-Viot process and placing in the same block those
that have the same ancestor~\citep{bertoin_stochastic_2003}. The link
with \Cref{Thm:frequencies} is that the diffusions $(Z_i)_{i \ge 1}$
correspond to the sizes of the offspring of the individuals of 
a Fleming-Viot process, ordered by extinction time of their progeny, see
\Cref{S:eves}. The integral transformation is roughly due to the
fact that in Kingman's coalescent with erosion, one needs to place in the
same block the individuals that have the same ancestor at their last
erosion event, which is an exponential variable with parameter $d$. This
heuristical argument is made rigorous in \Cref{S:eves}, where
\Cref{Thm:frequencies} is proved.

\paragraph{Size of the small blocks.}
In order to capture the characteristics of the small blocks of $\Pi^n$,
we study the empirical measure of the size of the blocks of $\Pi^n$. 
Let $M^n$ be the total number of blocks of $\Pi^n$, and let $(\abs{C^n_1},
\dots, \abs{C^n_{M^n}})$ be their sizes. For each $k \ge 1$, we denote
\[
    \mu^n_k = \frac{1}{M^n} \Card(\Set{i \suchthat \abs{C^n_i} = k})
\]
the frequency of blocks of size $k$. The probability vector 
$(\mu^n_k)_{k \ge 1}$ is the empirical measure of the size of 
the blocks of $\Pi^n$. We give the following characterization of the
asymptotic law of $(\mu^n_k)_{k \ge 1}$ and $M^n$.

\begin{theorem} \label{prop:sizeErosion}
    \begin{enumerate}[label=\upshape(\roman*)]
    \item The following convergence holds in probability
    \[
        \lim_{n \to \infty} \frac{M^n}{\sqrt{n}} = \sqrt{2d}.
    \]
    \item Moreover, for each $k \ge 1$, the following convergence holds
    in probability 
    \[
        \lim_{n \to \infty} \mu^n_k = \frac{1}{2^{2k-1}} \frac{1}{k}
        \binom{2(k-1)}{k-1} = \Prob{J = k},
    \]
    where $J$ is the total progeny of a critical binary branching
    process.
    \end{enumerate}
\end{theorem}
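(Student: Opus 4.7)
My plan is to introduce an auxiliary process called Kingman's coalescent with immigration at rate $\lambda$ --- a partition-valued Markov process in which pairs of blocks coalesce at rate $1$ and singleton blocks immigrate at total rate $\lambda$ --- and to couple it with $\Pi^n$ taking $\lambda = nd$. The immigration process is tractable at stationarity because its block-count $M_t$ evolves as a birth--death chain with birth rate $\lambda$ and death rate $\binom{M_t}{2}$, whose stationary distribution concentrates on values near $\sqrt{2\lambda}$ for $\lambda$ large.

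\emph{Step 1: analysis of the immigration process.} Let $N_k$ denote the number of blocks of size $k$ at stationarity and set $M = \sum_k N_k$. Writing the mean-field balance equations, the rate at which blocks of size $k$ are created by coalescence of a size-$j$ block with a size-$(k-j)$ block must match the rate at which size-$k$ blocks are absorbed into a larger one, giving $E[N_1(M-1)] = \lambda$ and, for $k\ge 2$,
\[
    E[N_k(M-1)] = \tfrac{1}{2}\sum_{j=1}^{k-1} E[N_j N_{k-j}] + O(E[N_{k/2}]).
\]
Dividing by $E[M^2]\sim 2\lambda$ and using concentration of $M$ and $N_k/M$ around their means, the limit fractions $\mu_k = \lim_{\lambda\to\infty} E[N_k/M]$ satisfy $\mu_1 = 1/2$ and $\mu_k = \tfrac{1}{2}\sum_{j=1}^{k-1}\mu_j\mu_{k-j}$ for $k\ge 2$. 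The generating function $F(x) = \sum_{k\ge 1}\mu_k x^k$ then solves $F = x/2 + F^2/2$, so that $F(x) = 1 - \sqrt{1-x}$. Extracting coefficients recovers $\mu_k = \frac{1}{2^{2k-1}}\frac{1}{k}\binom{2(k-1)}{k-1}$, which is precisely the total progeny law of the critical binary branching process.

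\emph{Step 2: coupling with the erosion process.} Attach a rate-$d$ Poisson clock to each $i\in[n]$; when it rings, $i$ is eroded from its block in $\Pi^n$ and simultaneously a new singleton is produced in the immigration process, while coalescence events are driven by a shared family of rate-$1$ Poisson clocks, one per pair of currently-existing blocks in each process. The two dynamics then differ only because erosion removes one integer from its previous block, whereas immigration does not. Since a typical block has size of order $\sqrt{n}\to\infty$, a single-integer removal changes a block's size negligibly, and only a vanishing fraction of integers sit in singletons at any time, so the effective immigration rate matches $nd$ to leading order. This transfers the conclusion of Step 1 to $\Pi^n$, yielding part (ii), while part (i) follows by sandwiching $M^n$ between two birth--death block-count chains with immigration rates slightly below and above $nd$.

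\emph{Main obstacle.} The hardest part is to make the coupling quantitative. First, the balance equations only produce relations between first moments; to upgrade these to convergence in probability of $N_k/M$ in the immigration process, one needs uniform control of the covariances between $N_j$ and $N_k$, which is delicate because the block populations are globally constrained through $M$. Second, one must bound the distortion caused by erosion's removal of an integer from small blocks --- small blocks contribute the leading order of $\mu^n_k$ for small $k$ and are exactly where the discrepancy with immigration is not obviously $o(1)$. Obtaining the required concentration and coupling estimates is the technical core of the argument.
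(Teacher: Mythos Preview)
Your high-level strategy --- study an immigration process at rate $nd$, then couple to erosion --- is exactly the paper's. The execution of both steps, however, diverges.

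\textbf{Step 1.} The paper does not use stationary balance equations. Instead it introduces the \emph{ancestral process}: choose $p$ blocks uniformly at time $0$ and, going backward in time, count how many immigration-process blocks are ancestors of each. Reversibility of the block-count chain together with a combinatorial counting lemma shows this ancestral process is Markov, and after rescaling time by $1/\sqrt{n}$ it converges to $p$ independent critical binary birth--death processes. Hence the sizes of $p$ uniformly chosen blocks converge \emph{jointly} to i.i.d.\ copies of $J$, and the method of moments ($E[(\mu^n_k)^p]\to P(J=k)^p$) immediately upgrades this to convergence of $\mu^n_k$ in probability --- no separate covariance analysis is needed. Your balance-equation route does recover the correct recursion $\mu_k=\tfrac{1}{2}\sum_j\mu_j\mu_{k-j}$ at first-moment level, but the concentration you flag as the main obstacle is precisely what the paper's genealogical approach delivers for free.

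\textbf{Step 2.} Your shared-clock coupling is not well posed: as soon as an integer already sitting in a singleton is eroded, the erosion block count is unchanged while the immigration block count increases, so the two processes have different numbers of blocks and there is no bijection on which to hang ``one clock per pair''. The paper's coupling is different: run immigration at rate $nd$, attach to each immigrated particle an independent uniform mark in $[n]$, and recover $\Pi^n$ by letting $i\in[n]$ follow the \emph{last} particle bearing mark $i$. Then every erosion block sits inside a unique immigration block, and $|\bar C|\ne|C|$ only if some ancestor of $\bar C$ shares its mark with a later-immigrated particle. Because the ancestral tree of a fixed block has depth $O(1/\sqrt{n})$ (by Step~1) and only $O(\sqrt{n})$ particles immigrate in that window, a mark collision has probability $O(1/\sqrt{n})$; the transfer is thus a short estimate, not the technical core. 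Part~(i) follows similarly, not by sandwiching between two birth--death chains but from $E[(\bar M^n-M^n)/\bar M^n]=P(|C^n|=0)\to 0$ together with $\bar M^n/\sqrt{n}\to\sqrt{2d}$.

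One further caution: a \emph{uniformly chosen} block does not have size of order $\sqrt{n}$ --- that is the content of~(ii). It is the size-biased block (containing a uniform integer) that is large; the paper's argument does not rely on that intuition.
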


In the previous proposition and hereafter we call critical binary
branching process the Markov process on $\N$ starting from $1$ that jumps
from $k$ to $k+1$ and from $k$ to $k-1$ at rate $k$. Its progeny is the 
sum of the initial number of particles and of the total number of birth
events, i.e., of jumps from $k$ to $k+1$, before the process is absorbed at $0$.

\begin{remark}
    It is interesting to notice that the limiting distribution of the
    vector $(\mu^n_k)_{k \ge 1}$ is determinisitc and does not depend on
    the erosion coefficient $d$.
\end{remark}

\begin{remark}
    The convergence of the vector $(\mu^n_k)_{k \ge 1}$ is equivalent to
    the convergence in probability of the empirical measure of the size of the
    blocks of $\Pi^n$ to the distribution of $J$ in the weak topology.
\end{remark}

Let us again discuss briefly the heuristic of our proof of this result.
Erosion occurs at a rate proportional to the size of the blocks, i.e.,
a block of size $k$ is eroded at rate $k$, while
coalescence events do not take the sizes of the blocks into account. As
there are only few blocks with large size in $\Pi^n$, and many small blocks, most coalescence
events occur between small blocks, while most erosion events occur within
these few large blocks. When restricting our attention to small blocks,
we can neglect erosion, and
consider that pairs of blocks coalesce at rate $1$, and that new blocks
of size $1$ appear at constant rate due to the erosion of the large
blocks. 

This heuristic led us to consider a process analogous to
Kingman's coalescent with erosion, where pairs of blocks coalesce
at rate $1$, but new singleton blocks immigrate at constant rate $d$.
We call this process Kingman's coalescent with immigration.
We consider the genealogy of a block sampled uniformly from Kingman's
coalescent with immigration. We prove that this genealogy converges,
as the immigration rate goes to infinity, to a critical binary
birth-death process. See the forthcoming
\Cref{prop:convergenceImmigration}.

\paragraph{Outline.} The remainder of the paper is organized as follows.
In \Cref{S:defImmigration} we provide two constructions of
Kingman's coalescent with immigration, as well as a coupling between
Kingman's coalescents with erosion and immigration.
\Cref{S:immigration} is then devoted to giving the genealogy
of the blocks of Kingman's coalescent with immigration. We there
prove a result analogous to \Cref{prop:sizeErosion},
see \Cref{prop:blockSizeImmigration}. \Cref{prop:sizeErosion} is proved in
\Cref{S:proofSizeErosion}, where we carry out the coupling
between Kingman's coalescents with erosion and immigration.
Finally, we prove \Cref{Thm:frequencies} in \Cref{S:eves}.

\paragraph{Possible extensions.} As we have
mentioned, Kingman's coalescent is part of the more general class of
fragmentation-coalescence processes. We now briefly discuss potential 
extensions of our results to such processes.

The main ingredient of our study of the
size of small blocks is that fragmentation is faster for larger
blocks, while coalescence occurs at the same speed regardless of the
size of the blocks. This allows us to neglect fragmentation 
and consider a purely coalescing system where new blocks immigrate due to
the fragmentation of the large blocks. First, this picture remains
valid for $\Lambda$-coalescents with erosion, but the proofs
would be more involved because computations could no longer be 
made explictly. Morever, we believe that this picture also 
remains valid for a broad class of binary fragmentation measures.
The particles that are removed from the large block would no
longer be of size one, but should not have time to split 
on the time-scale when small blocks are formed, yielding a situation
similar to the erosion case.

\Cref{Thm:frequencies} relies on a construction of the
stationary distribution of Kingman's coalescent with erosion
from a Fleming-Viot process that can be directly generalized to $\Lambda$-coalescents
with erosion (and even to $\Xi$-coalescents with erosion) by 
using the corresponding $\Lambda$-Fleming-Viot process. However, the
explicit expression of the size of the blocks in terms of hierarchically
independent diffusions cannot be achieved in general. Nevertheless see
the end of \Cref{S:eves} for a discussion of a possible extension of
\Cref{Thm:frequencies} to Beta-coalescents with erosion.

Overall, the techniques and ideas we use in this work are not entirely 
specific to Kingman's coalescent with erosion. Nevertheless, in this 
case, the proofs are greatly simplified because all calculations can
be made explicitly. This reason led us to restrict our
attention to Kingman's coalescent with erosion in this work, and
to leave possible extensions for future work.

\section{Kingman's coalescent with immigration} 
\label{S:defImmigration}

In this section we construct Kingman's coalescent with
immigration as a partition-valued process such that 
pairs of blocks coalesce at rate $1$ and new blocks immigrate
at rate $d$. Then, we give an alternative construction of 
Kingman's coalescent with erosion from the flow of 
bridges of~\cite{bertoin_stochastic_2003}. 
Finally, the coupling between Kingman's coalescents
with erosion and with immigration is carried out in
\Cref{SS:couplingErosion}.

    \subsection{Definition}
    \label{SS:lookdown}

Consider a Poisson point process on $\R$ with intensity $d \diff t$,
and let $(T_i)_{i \in \Z}$ be its atoms labeled in increasing order
such that $T_{0} < 0 < T_1$. The sequence $(T_i)_{i \in \Z}$
corresponds to the immigration times of new particles in
the system.

Fix $N \in \Z$, we will first define Kingman's coalescent
with immigration for the particles that have a label larger
that $N$, and then extend it to all particles by consistency.
We do that in the following way. Initially, set
\[
    \forall t < T_N,\; \bar{\Pi}^N_t = \emptyset.
\]
We now extend $\bar{\Pi}^N_t$ to all real times by induction.
Suppose that $\bar{\Pi}^N_t$ has been defined on 
$\OpenInterval{-\infty, T_k}$, for $k \ge N$. We first set 
\[
    \bar\Pi^N_{T_k} = \bar\Pi^N_{T_k-} \cup \Set{k}   
\]
to represent the immigration of the new particle with label $k$.
We now let each pair of blocks of $\bar{\Pi}^N_t$ coalesce
at rate one for $T_k \le t < T_{k+1}$. One can achieve this
by considering, conditional on $\Set{\bar{\Pi}^N_{T_k} = \bar{\pi}_k}$,
an independent version $(\Pi^k_t)_{t \ge 0}$ of Kingman's coalescent
started from $\bar{\pi}_k$, and setting
\[
    \forall t < T_{k+1} - T_k,\; \bar{\Pi}^N_{T_k + t} = \Pi^k_t.
\]

We say that the process $(\bar{\Pi}^N_t)_{t \in \R}$ is 
the \emph{$N$-Kingman coalescent with immigration rate $d$}. The
following proposition shows that we can extend consistently
the $N$-Kingman's coalescent with immigration to a process taking
its values in the partitions of $\Z$, and
that it is a Markov process whose transitions coincide 
with our intuitive description of Kingman's coalescent
with immigration.

\begin{proposition}
    \begin{enumerate}[label=\upshape(\roman*)]
        \item There exists a unique process $(\bar{\Pi}_t)_{t \in \R}$,
        called Kingman's coalescent with immigration rate $d$,
        such that for all $N \in \Z$, its restriction 
        to $\Set{i \in \Z \suchthat i \ge N}$ is distributed as the
        $N$-Kingman coalescent with immigration.
        \item With probability one, $\bar{\Pi}_t$ has finitely many blocks
        for all $t \in \R$.
        \item The process $(\bar{\Pi}_t)_{t \in \R}$ is Markovian.
        Conditional on $\Set{\bar{\Pi}_t = \bar{\pi}}$, where $\bar{\pi}$
        is a partition of $\Set{i \in \Z \suchthat i \le n}$, each
        pair of blocks coalesce at rate $1$, and at rate $d$ the
        process goes to the partition $\bar\pi \cup \Set{n+1}$, i.e.,
        a new particle immigrates.
    \end{enumerate}
\end{proposition}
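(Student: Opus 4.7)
For part (i), the plan is to show that the family $(\bar\Pi^N)_{N \in \Z}$ is consistent under restriction to larger index sets, so that a unique projective limit exists. Concretely, for $N' < N$, I would verify that the restriction of $\bar\Pi^{N'}$ to labels in $\Set{k \in \Z \suchthat k \ge N}$ has the same law as $\bar\Pi^N$. Before $T_N$, no label $\ge N$ is present in $\bar\Pi^{N'}$, so the restriction is empty, matching the initial condition of $\bar\Pi^N$; at each $T_k$ with $k \ge N$, the singleton $\Set{k}$ is added in both processes; and between consecutive immigration times the evolution in both processes is Kingman's coalescent on the currently present labels. The key classical fact to invoke is that the restriction of Kingman's coalescent to a subset of its index set is itself a Kingman coalescent on that subset, which follows from the pair-based form of the merger rates. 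Uniqueness of $(\bar\Pi_t)_{t \in \R}$ then holds because its law is determined by the joint laws of all its restrictions.

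Claim (ii) is the substantive step. Fix $t \in \R$ and pick any $s < t$. Every block of $\bar\Pi_t$ either consists entirely of labels that immigrated during $(s, t]$, or contains at least one label already present at time $s$. The number of blocks of the first kind is at most the number of immigrants during $(s, t]$, which is Poisson with parameter $d(t-s)$ and hence a.s.\ finite. For the second kind, I would observe that the evolution between $s$ and $t$ consists of Kingman mergers and further immigrations, and only mergers can decrease the count of blocks containing an old label; therefore this count at time $t$ is bounded above by the number of blocks at time $t$ of a pure Kingman coalescent started at time $s$ from the (possibly infinite) partition present at that time. By the coming-down-from-infinity property of Kingman's coalescent --- which extends to any starting partition of a countable set via a direct relabelling of blocks by $\N$ --- this number is a.s.\ finite. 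Summing the two bounds yields the claim.

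Part (iii) is essentially a reading of the construction. Between consecutive immigration atoms, $(\bar\Pi_t)$ evolves as a Kingman coalescent on its current set of labels, which is Markov with pair-merger rate $1$; the immigration atoms themselves form a Poisson point process, independent of the coalescent pieces by construction. Given $\bar\Pi_t = \bar\pi$ with largest label $n$ and $m$ blocks (finite a.s.\ by (ii)), the future is determined by racing $\binom{m}{2}$ independent rate-$1$ exponentials --- one per pair of blocks, the corresponding pair merging --- against a single rate-$d$ exponential for the next immigration, which produces the new singleton $\Set{n+1}$. These clocks are independent of the past, yielding both the Markov property and the stated jump rates. The only genuinely non-routine ingredient across the three parts is the coming-down-from-infinity invocation in (ii); the remainder is direct bookkeeping on top of the construction.
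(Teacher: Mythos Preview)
Your proposal is correct and follows essentially the same approach as the paper: consistency via sampling consistency of Kingman's coalescent plus Kolmogorov extension for (i), coming-down-from-infinity plus local finiteness of immigration for (ii), and the Markov property of Kingman's coalescent together with the independent Poisson immigration clock for (iii). Your treatment of (ii) is in fact more explicit than the paper's one-line argument, since you spell out the decomposition into blocks consisting only of recent immigrants versus blocks containing an old label and bound each piece separately.
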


\begin{proof}
    (i) Let $(\bar{\Pi}^N_t)_{t \in \R}$ be a $N$-Kingman's coalescent
    with immigration. It is sufficient to
    show that the restriction $(\bar{\Pi}^{N+1}_t)_{t \in \R}$ of
    $(\bar{\Pi}^N_t)_{t \in \R}$ to $\Set{i \in \Z \suchthat i \ge N+1}$
    is distributed as a $N+1$-Kingman's coalescent with immigration, and
    the result will follow from Kolmogorov's extension theorem.
    Obviously, the immigration times of $(\bar{\Pi}^{N+1}_t)_{t \in \R}$
    have the desired distribution. The result is now a simple consequence
    of the sampling consistency of Kingman's coalescent.

    \medskip

    (ii) Let us now prove the second point. Kingman's coalescent has the
    property of coming down from infinity~\citep{kingman_coalescent_1982}.
    This means that even if Kingman's coalescent is started from a
    partition with an infinite number of blocks, then for all positive
    times it will have only finitely many blocks. Thus, as the number of
    immigrated particles is locally finite, Kingman's coalescent with
    immigration only has a finite number of blocks for all times a.s.

    \medskip

    (iii) That each $(\bar{\Pi}^N_t)_{t \in \R}$ is a Markov process is a
    direct consequence of the Markov property of Kingman's coalescent,
    and of the fact that the immigration times are distributed according
    to an independent Poisson point process with intensity $d$. This
    readily implies the Markov property of $(\bar{\Pi}_t)_{t \in \R}$.
\end{proof}

An interesting consequence of the last result is that the process
counting the number of blocks of Kingman's coalescent with immigration
is a Markov birth-death process. More precisely, for $t \in \R$, let
$M_t$ be the number of blocks of the partition $\bar{\Pi}_t$. Then
$(M_t)_{t \in \R}$ is a stationary birth-death process.
\begin{corollary} \label{cor:immigrationBlockMarkov}
    The process $(M_t)_{t \in \R}$ counting the number of blocks
    of Kingman's coalescent with immigration rate $d$ is a stationary
    Markov process. Conditional on $\Set{M_t = k}$, it jumps to
    \begin{itemize}
        \item $k+1$ at rate $d$.
        \item $k-1$ at rate $k(k-1)/2$.
    \end{itemize}
\end{corollary}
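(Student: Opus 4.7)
The plan is to deduce the stated corollary from the previous proposition by a standard lumpability / projection argument, followed by an invariance-under-time-shift argument for stationarity.

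First I would verify the Markov property of $(M_t)_{t \in \R}$. Since $(\bar{\Pi}_t)_{t \in \R}$ is Markov and $M_t$ is a deterministic function of $\bar{\Pi}_t$ (the number of blocks), it suffices to check that the transition rates of $\bar{\Pi}$ out of a partition $\bar{\pi}$ with $k$ blocks depend on $\bar{\pi}$ only through $k$. By part (iii) of the previous proposition, $\bar{\Pi}$ jumps from $\bar{\pi}$ by either (a) coalescing one of the $\binom{k}{2}$ pairs of blocks, each at rate $1$, yielding a partition with $k-1$ blocks, or (b) immigration at rate $d$, yielding a partition with $k+1$ blocks. Both aggregate rates depend solely on $k$, so the standard lumpability criterion applies and $(M_t)_{t \in \R}$ is Markov with jump rate $d$ to $k+1$ and rate $\binom{k}{2} = k(k-1)/2$ to $k-1$.

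Next I would establish stationarity by exhibiting time-translation invariance of the full partition-valued process $(\bar{\Pi}_t)_{t \in \R}$. The construction in \Cref{SS:lookdown} uses only two ingredients: a Poisson point process on $\R$ with intensity $d\,\diff t$, and an independent family of Kingman coalescent dynamics run between the immigration times. The Poisson process is translation invariant, and Kingman's coalescent is time-homogeneous, so for any $s \in \R$ the shifted process $(\bar{\Pi}_{t+s})_{t \in \R}$ is obtained from a Poisson point process with the same intensity and independent Kingman dynamics, whence it has the same law as $(\bar{\Pi}_t)_{t \in \R}$. Projecting onto the number of blocks gives stationarity of $(M_t)_{t \in \R}$.

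The only point requiring minor care is the rigorous justification of translation invariance: one should check that for each fixed $s$, the finite-dimensional distributions of $(\bar{\Pi}_{t+s})_{t \in \R}$ agree with those of $(\bar{\Pi}_t)_{t \in \R}$. This reduces to comparing the constructions of the $N$-Kingman coalescents with immigration at shifted times, which is immediate from the translation invariance of the driving Poisson process, together with the consistency established in part (i) of the previous proposition. No step is genuinely difficult here; the corollary is essentially a direct reading of the transition mechanism under the $k \mapsto M$ projection.
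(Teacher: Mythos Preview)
Your proposal is correct and matches the paper's approach: the paper states this corollary without proof, treating it as an immediate consequence of part~(iii) of the preceding proposition, and your lumpability argument together with the translation-invariance observation is precisely the natural way to fill in those details. Nothing further is needed.
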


\subsection{Preliminaries on flows of bridges} \label{SS:bridges}

The previous construction of the Kingman coalescent with immigration
is based on Kolmogorov's extension theorem.
The aim of the next two sections is to give an alternative
construction of Kingman's coalescent with immigration based on 
the flow of bridges of~\cite{bertoin_stochastic_2003}. 
This construction will only be needed in \Cref{S:proofSizeErosion}
for the proof of \Cref{Thm:frequencies}. In this section we recall
the material on flows of bridges that will be needed.

\paragraph{Bridges.}
We call bridge~\citep{bertoin_stochastic_2003} any random function of the form
\[
    \forall u \in [0, 1],\; B(u) = (1 - \sum_{i \ge 1} \beta_i) u + \sum_{i \ge 1} \beta_i \Indic{u \ge V_i},
\]
for some random mass-partition $(\beta_i)_{i \ge 1}$ and an independent
i.i.d.\ sequence of uniform $[0, 1]$ variables $(V_i)_{i \ge 1}$.
For a bridge $B$, we define its inverse $B^{-1}$ as
\[
    \forall u \in \COInterval{0, 1},\; B^{-1}(u) = \inf \Set{t \in [0, 1] \suchthat
    B(t) > u}, \quad B^{-1}(1) = 1.
\]
Let $(U_i)_{i \ge 1}$ be a sequence of i.i.d.\ uniform variables.
An exchangeable partition $\hat{\Pi}$ of $\N$ can be obtained from $B$
and $(U_i)_{i \ge 1}$ by setting
\[
    i \sim_{\hat{\Pi}} j \iff B^{-1}(U_i) = B^{-1}(U_j).
\]
Let $(C_1, C_2, \dots)$ be the blocks of $\hat{\Pi}$ labeled in decreasing
order of their least elements, i.e., such that 
\[
    i \le j \iff \min(C_i) \le \min(C_j).
\]
To each block $C_i$ is associated a unique random variable $V'_i$ defined 
as 
\[
    \forall j \in C_i,\; V'_i = B^{-1}(U_j).
\]
If $\hat\Pi$ has finitely many blocks, say $M$, for $i > M$ we set $V'_i = \tilde{V}'_i$
where $(\tilde{V}'_i)_{i \ge 1}$ is an independent sequence of i.i.d.\ uniform
random variables. The sequence $(V'_i)_{i \ge 1}$ will be referred to as 
the sequence of ancestors of the blocks of $\hat{\Pi}$.
The key results on bridges from~\cite{bertoin_stochastic_2003}
is their Lemma~2 that we state here for later use.
\begin{lemma}[\citealt{bertoin_stochastic_2003}] \label{lem:bridges}
    Consider a bridge $B$, and let $\hat{\Pi}$ and $(V'_i)$ be
    respectively the partition and sequence of ancestors obtained from
    $B$ as above. Then $(V'_i)_{i \ge 1}$ is independent of $\hat{\Pi}$, 
    and $(V'_i)_{i \ge 1}$ is a sequence of i.i.d.\ uniform variables.
\end{lemma}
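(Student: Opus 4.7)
The plan is to decompose the randomness in the construction so that the partition $\hat{\Pi}$ and the ancestors $(V'_i)$ depend on disjoint sources. The starting point is to condition on the mass-partition $(\beta_k)_{k\ge 1}$, after which the remaining randomness consists of two independent i.i.d.\ uniform sequences: the jump locations $(V_k)_{k\ge 1}$ and the sample points $(U_j)_{j\ge 1}$.

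First I would introduce a ``colour'' variable $c_j$ for each $j\ge 1$, defined by $c_j = k$ if $U_j$ lies in the jump interval of $B$ at $V_k$, and $c_j = 0$ if $U_j$ lies in the dust region. A direct calculation gives $\mathbb{P}(c_j = k \mid (V_\ell)_\ell) = \beta_k$ for $k \ge 1$; since this probability does not depend on $(V_\ell)$, the sequence $(c_j)_{j\ge 1}$ is conditionally i.i.d.\ and \emph{independent} of $(V_k)_{k\ge 1}$. The partition $\hat{\Pi}$ is then a deterministic function of $(c_j)$: indices sharing a common non-zero colour form a block, and indices with colour $0$ are dust singletons.

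Next I would read off the ancestors. A non-dust block $C_i$ has ancestor $V'_i = V_{k_i}$, where $k_i$ is determined by $(c_j)$; since $(V_k)$ is independent of $(c_j)$ and i.i.d.\ uniform, these ancestors are i.i.d.\ uniform conditionally on $(c_j)$. A dust singleton $\{j\}$ has ancestor $V'_i = B^{-1}(U_j)$; a change of variable on the linear segments of $B$ shows that conditionally on $c_j = 0$ and $(V_\ell)$, $B^{-1}(U_j)$ has Lebesgue density $1$ on $[0,1]$, so it is uniform and, up to a null set, independent of $(V_\ell)$. Combining the atom and dust cases, and using the fresh uniforms $\tilde{V}'_i$ to extend beyond the $M$-th block, $(V'_i)_{i\ge 1}$ is a collection of i.i.d.\ uniform variables conditionally on $(c_j)$, hence unconditionally i.i.d.\ uniform and independent of $\hat{\Pi}$.

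The main obstacle will be the dust case: one has to verify that $B^{-1}(U_j)$ conditional on $c_j = 0$ is genuinely uniform on $[0,1]$, and that its joint distribution with the atom ancestors $V_k$ factorises. The change of variable is elementary but must be written carefully since $B^{-1}$ is only piecewise linear and its slope $(1-\sum_k\beta_k)^{-1}$ depends on the entire mass-partition. In the dust-free case $\sum_k \beta_k = 1$ the argument collapses to the much simpler observation that every ancestor is some $V_{k_i}$ with $k_i$ a function of $(c_j)$ alone, which is trivially seen to be uniform and independent of $\hat{\Pi}$.
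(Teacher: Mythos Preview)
The paper does not supply its own proof of this lemma; it is quoted from \cite{bertoin_stochastic_2003} (their Lemma~2) and used as a black box. Your outline is essentially the argument given in that reference: condition on the mass-partition, introduce the colour variables $c_j$ so that $\hat{\Pi}$ becomes a deterministic function of $(c_j)$ alone, and then read off the ancestors as either jump locations $V_{k_i}$, dust preimages $B^{-1}(U_j)$, or auxiliary uniforms $\tilde{V}'_i$. The dust computation you flag as the main obstacle is exactly the point requiring care, and your change-of-variable on the linear pieces of $B$ is the right way to handle it; the only thing to write out explicitly is that, conditionally on $(\beta_k,c_j)$, the dust preimages have a law that does not depend on $(V_\ell)$, which is what yields joint independence from the atom ancestors $V_{k_i}$. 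In short, your proposal is correct and reproduces the original source's argument.
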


\paragraph{The standard flow of bridges.} A flow of bridges is defined as
follows.
\begin{definition}
    A flow of bridges is a family of bridges $(B_{s,t})_{s \le t}$ such
    that:
    \begin{enumerate}[label=\upshape(\roman*)]
        \item For any $s \le u \le t$, we have $B_{s,u} \circ B_{u,t} = B_{s,t}$.
        \item For $p \ge 1$ and $t_1 \le \dots \le t_p$, the bridges 
        $B_{t_1, t_2}, \dots, B_{t_{p-1}, t_p}$ are independent, and $B_{t_1, t_2}$
        is distributed as $B_{0, t_2-t_1}$.
        \item The limit $B_{0,t} \to \Id$ as $t \downarrow 0$ holds in
        probability in the Skorohod space.
    \end{enumerate}
\end{definition}
A flow of bridges encodes the dynamics of a population represented by the
interval $[0, 1]$. Let $t \in \R$ and $x < y$. If the interval $[x, y]$
is interpreted as a subfamily of the population at time $t$, then 
its progeny at time $s \le t$ is represented by the interval
$[B_{s,t}(x-), B_{s,t}(y)]$. (Notice that time is going backward:
if $t$ is the present, then $s \le t$ represents the future of the 
population.)

\medskip

By the independence and stationarity of the increments of the flow, the
distribution of a flow of bridges is entirely characterized by the distribution of $B_{0,t}$,
for $t \ge 0$. We will be particularly interested into the so-called
\emph{standard flow of bridges}, that can be described as follows.
Let $t \ge 0$ and consider the bridge
\[
    \forall u \in [0, 1],\; B_{0,t}(u) = \sum_{i = 1}^{N_t} \beta_i \Indic{V_i \le u},
\]
where
\begin{enumerate}[label=\upshape(\roman*)]
    \item The process $(N_t)_{t \ge 0}$ is distributed as a pure-death process started
    at $\infty$, and going from $k$ to $k-1$ at rate $k(k-1)/2$.
    \item Conditionally on $N_t$, $(\beta_1, \dots, \beta_{N_t})$ has
    a Dirichlet distribution with parameter $(1, \dots, 1)$.
    \item The variables $(V_i)_{i \ge 1}$ is an independent i.i.d.\ sequence of uniform
    variables.
\end{enumerate}
Then we know~\citep{bertoin_stochastic_2003} that there exists a flow
of bridges $(B_{s,t})_{s \le t}$ such that $B_{0,t}$ is distributed as
above. It is called the standard flow of bridges.

\medskip

Our interest in the standard flow of bridges is that is represents the
dynamics of a population whose genealogy is given by Kingman's
coalescent.  Let $(U_i)_{i \ge 1}$ be a sequence of i.i.d.\ uniform
variables, and let $\hat\Pi_t$ be the partition obtained from the bridge
$B_{0,t}$ and the sequence $(U_i)_{i \ge 1}$. We stress that the
\emph{same} sequence is used for all $t$. Then the process $(\hat\Pi_t)_{t \ge 0}$ 
is distributed as Kingman's coalescent started from the partition
of $\N$ into singletons~\citep{bertoin_stochastic_2003}.

\paragraph{The Fleming-Viot process.} One of the main advantages of flows
of bridges is that they couple a backward process, giving the genealogy 
of the population, and a forward process, giving the size of the progeny
of the individuals in the population. This forward process is often
encoded as a measure-valued process known as a Fleming-Viot process.

Let $(B_{s,t})_{s \le t}$ be a standard flow of bridges. For each $t \ge
0$, $B_{-t,0}$ is the distribution function of some random measure
$\rho_t$ on $[0, 1]$. The measure-valued process $(\rho_t)_{t \ge 0}$ is
called a Fleming-Viot process~\citep{etheridge_2000}. A well-known fact that we will use
is that the dynamics of the mass of a fixed interval 
is a Wright-Fisher diffusion. More precisely, let $x \in [0, 1]$ and 
$X_t = \rho_t([0, x])$. Then the process $(X_t)_{t \ge 0}$ is a Wright-Fisher
diffusion started from $x$, i.e., it is distributed as the unique solution to 
\[
    \diff X = \sqrt{X(1-X)} \diff W,\quad X_0 = x, 
\] 
where $W$ is a standard Brownian motion.

    \subsection{A flow of bridges construction of Kingman's coalescent
    with immigration}
    \label{SS:coupling}

Let $(B_{s,t})_{s \le t}$ be a standard flow of bridges. We now construct
a version of Kingman's coalescent with immigration from $(B_{s,t})_{s \le t}$. 
Consider a Poisson point process on $\R \times [0, 1]$ with
intensity $d \diff t \otimes \diff x$, and let $(T_i, U_i)_{i \in \Z}$ be
its atoms, labeled in increasing order of their first coordinate such
that $T_0 < 0 < T_1$.
Similarly to \Cref{SS:lookdown}, the times $(T_i)_{i \in \Z}$
correspond to immigration times of new particles. Here the sequence
$(U_i)_{i \in \Z}$ represents the location in the population of these
immigrated particles.

For each $t \in \R$, we define a partition $\bar{\Pi}_t$ of 
$\Set{i \in \Z \suchthat T_i \le t}$ by setting
\[
    i \sim_{\bar{\Pi}_t} j \iff B^{-1}_{T_i, t}(U_i) = B^{-1}_{T_j, t}(U_j).
\]
The following proposition shows that $(\bar{\Pi}_t)_{t \in \R}$
is distributed as Kingman's coalescent with immigration.
\begin{proposition} \label{prop:bridgeConstruction}
    The process $(\bar{\Pi}_t)_{t \in \R}$ defined from the flow 
    of bridges is a version of Kingman's coalescent with immigration rate $d$.
\end{proposition}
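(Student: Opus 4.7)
The plan is to verify, for every $N \in \Z$, that the restriction of $(\bar{\Pi}_t)_{t \ge T_N}$ to the indices $\{i \ge N\}$ agrees in law with the $N$-Kingman coalescent with immigration of \Cref{SS:lookdown}. Since the immigration times $(T_i)_{i \in \Z}$ are by construction the atoms of a Poisson point process of intensity $d$, this reduces to checking two facts: (a) at each $T_k$ the new index $k$ enters $\bar{\Pi}_{T_k}$ as a singleton block; (b) on each interval $[T_k, T_{k+1})$ the process $(\bar{\Pi}_t)$ evolves as a Kingman coalescent started from $\bar{\Pi}_{T_k}$, independently of its past.

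Fact (a) follows because $B_{T_k, T_k} = \Id$, so $B^{-1}_{T_k, T_k}(U_k) = U_k$, whereas for each $j < k$ the value $B^{-1}_{T_j, T_k}(U_j)$ lies in the countable set of locations of the atoms of $B_{T_j, T_k}$, which $U_k$ avoids almost surely by uniformity and independence. For fact (b), the flow cocycle $B_{T_i, T_k} \circ B_{T_k, t} = B_{T_i, t}$ (valid for $T_i \le T_k \le t$) yields $B^{-1}_{T_i, t} = B^{-1}_{T_k, t} \circ B^{-1}_{T_i, T_k}$, so setting $V^{(k)}_i := B^{-1}_{T_i, T_k}(U_i)$ for $i < k$ and $V^{(k)}_k := U_k$, we obtain, for all $t \in [T_k, T_{k+1})$,
\[
    i \sim_{\bar{\Pi}_t} j \iff B^{-1}_{T_k, t}\bigl(V^{(k)}_i\bigr) = B^{-1}_{T_k, t}\bigl(V^{(k)}_j\bigr).
\]
Two indices lie in the same block of $\bar{\Pi}_{T_k}$ iff $V^{(k)}_i = V^{(k)}_j$, so each block of $\bar{\Pi}_{T_k}$ carries a well-defined ancestor location in $[0,1]$, and the evolution on $[T_k, T_{k+1})$ coincides with the standard construction of Kingman's coalescent from the flow $(B_{T_k, t})_{t \ge T_k}$ driven by these ancestor locations.

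The main obstacle is to verify that, conditionally on $\bar{\Pi}_{T_k}$, the ancestor locations attached to its distinct blocks form an i.i.d.\ uniform family on $[0,1]$ independent of the flow after $T_k$; granted this, the previous observation produces a genuine Kingman coalescent on $[T_k, T_{k+1})$. I would prove it by induction on $k \ge N$. The base case $k = N$ is immediate, as the single block $\{N\}$ carries the ancestor $U_N$, uniform and independent of everything else by construction of the Poisson point process. For the inductive step, the ancestor locations at time $T_{k+1}-$ are obtained from those at $T_k$ by applying $B^{-1}_{T_k, T_{k+1}}$; by the induction hypothesis the input ancestors are i.i.d.\ uniform and independent of $B_{T_k, T_{k+1}}$ (itself independent of the pre-$T_k$ $\sigma$-algebra by independent increments of the flow). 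Applying \Cref{lem:bridges} to $B_{T_k, T_{k+1}}$, if necessary augmenting the finite input collection by independent uniforms so as to match its statement and then restricting back to the relevant blocks, shows that the ancestors at $T_{k+1}-$ are again i.i.d.\ uniform and independent of $\bar{\Pi}_{T_{k+1}-}$. Appending the fresh uniform $U_{k+1}$ as the ancestor of the immigrating block $\{k+1\}$ preserves this property, and independence from the post-$T_{k+1}$ flow follows again from the independent increments of $(B_{s,t})$. Combining (a), (b) and the inductive statement matches the construction of \Cref{SS:lookdown}, concluding the proof.
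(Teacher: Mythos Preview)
Your approach is essentially the paper's: restrict to $\{i\ge N\}$, use the cocycle identity $B^{-1}_{T_i,t}=B^{-1}_{T_k,t}\circ B^{-1}_{T_i,T_k}$ to reduce the evolution on $[T_k,T_{k+1})$ to a Kingman coalescent driven by ``ancestor locations'', and propagate by induction on $k$ the fact that these ancestors are i.i.d.\ uniform using \Cref{lem:bridges}, appending the fresh uniform $U_{k+1}$ at each immigration.

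The one place where the paper is more careful than your sketch is the independence claim in the inductive step. A single application of \Cref{lem:bridges} to $B_{T_k,T_{k+1}}$ gives that the ancestors at time $T_{k+1}-$ are independent of the partition $\bar\Pi_{T_{k+1}-}$, but not directly of the whole trajectory $(\bar\Pi_t)_{t\le T_{k+1}}$; yet you need the latter to conclude that the Kingman coalescents on the successive intervals are \emph{mutually} independent, as required by the definition in \Cref{SS:lookdown}. The paper closes this gap by an inner induction on finitely many intermediate times $T_k\le t_1<\dots<t_{p+1}\le T_{k+1}$, applying \Cref{lem:bridges} at each step to show that the sequence of ancestors at $t_{p+1}$ is independent of $\bigl((\bar\Pi^N_t)_{t\le T_k},\bar\Pi^N_{t_1},\dots,\bar\Pi^N_{t_{p+1}}\bigr)$. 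Strengthening your inductive hypothesis to ``the ancestors at $T_k$ are i.i.d.\ uniform and independent of $(\bar\Pi_t)_{t\le T_k}$'' and inserting this inner induction makes your argument complete and identical to the paper's.
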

\begin{proof}
    The proof almost identical to the proof of Corollary~1
    of \cite{bertoin_stochastic_2003}. The main difference is that 
    here the flow of bridges is sampled at various times $(T_i)_{i \in \Z}$
    while for the classical Kingman coalescent, the flow of bridges is
    only sampled at an initial time.
    
    We work conditionally on $(T_i)_{i \in \Z}$ and consider these
    times as fixed. It is sufficient to show that for all $N \in \Z$,
    between immigration times the blocks of $(\bar{\Pi}^N_t)_{t \in \R}$
    coalesce according to independent versions of Kingman's coalescent.

    Let $t \in \R$, and let $(C_1, \dots, C_{M_t})$
    be the blocks of $\bar{\Pi}^N_t$, where $M_t$ is the number of blocks, 
    and where the blocks are labeled such that
    \[
        i \le j \iff \min(C_i) \le \min(C_j).
    \]
    Similarly to \Cref{SS:bridges}, we can define the sequence of ancestors
    of $\bar{\Pi}^N_t$ by setting
    \[
        \forall j \in C_i,\; V'_i = B^{-1}_{T_j, t}(U_j),
    \]
    and supplementing it with an independent sequence of i.i.d.\ uniform
    variables $(\tilde{V}'_i)_{i \ge 1}$, i.e., defining $\forall i > M_t$, 
    $V'_i = \tilde{V}'_i$.

    \medskip

    Let us show by induction that for all $k \ge N$, 
    \begin{enumerate}[label=\upshape(\roman*)]        
        \item The ancestors $(V^{(k)}_i)_{i \ge 1}$ of $\bar{\Pi}^N_{T_k}$ are i.i.d.\ with
        uniform distribution.
        \item The sequence $(V^{(k)}_i)_{i \ge 1}$ is independent of
        $(\bar{\Pi}^N_t)_{t \le T_k}$.
        \item $(\bar{\Pi}^N_t)_{t \le T_k}$ is a version of the $N$-Kingman
        coalescent with immigration.
    \end{enumerate}

    Fix $T_k \le t_1 < \dots < t_{p+1} \le T_{k+1}$. By induction on $p$ we can
    suppose that the sequence of ancestors of $\bar{\Pi}^N_{t_p}$,
    denoted by $(V^{(t_p)}_i)_{i \ge 1}$, is independent of $\big( (\bar{\Pi}^N_t)_{t \le T_k},
    \bar{\Pi}^N_{t_1}, \dots, \bar{\Pi}^N_{t_p} \big)$. Then (i) and (ii)
    are proved if we can show that the sequence of ancestors of 
    $\bar{\Pi}^N_{t_{p+1}}$ is independent of $\big( (\bar{\Pi}^N_t)_{t \le T_k},
    \bar{\Pi}^N_{t_1}, \dots, \bar{\Pi}^N_{t_{p+1}} \big)$.

    Let us now call $\Pi^*$ the partition obtained from the bridge 
    $B_{t_p, t_{p+1}}$ and the sequence $(V^{(t_p)}_i)_{i \ge 1}$, i.e.,
    \[
        i \sim_{\Pi^*} j \iff 
        B^{-1}_{t_p, t_{p+1}}(V^{(t_p)}_i) = B^{-1}_{t_p, t_{p+1}}(V^{(t_p)}_j)     ,
    \] 
    and let $(V^*_i)_{i \ge 1}$ be the sequence of ancestors of $\Pi^*$,
    i.e.,
    \[
        \forall j \in C^*_i,\; V^*_i = B^{-1}_{t_p, t_{p+1}}(V^{(t_p)}_j),
    \]
    where $(C^*_1, C^*_2, \dots)$ denote the blocks of $\Pi^*$ labeled in 
    increasing order of their minimal elements as above.    
    Using the fact that for $u \le s \le t$, $B^{-1}_{u, t} = B^{-1}_{s, t} \circ B^{-1}_{u, s}$, 
    we get that for all $N \le i, j \le k$, 
    \begin{align}
        i \sim_{\bar{\Pi}_{t_{p+1}}} j &\iff 
        B^{-1}_{t_p, t_{p+1}}(B^{-1}_{T_i, t_p}(U_i)) = 
        B^{-1}_{t_p, t_{p+1}}(B^{-1}_{T_j, t_p}(U_j)) \nonumber\\
        &\iff 
        B^{-1}_{t_p, t_{p+1}}(V^{(t_p)}_{b(i)}) = 
        B^{-1}_{t_p, t_{p+1}}(V^{(t_p)}_{b(j)})\nonumber\\
        &\iff b(i) \sim_{\Pi^*} b(j) \label{eq:bridges}
    \end{align}
    where $b(i)$ denotes the label of the block of $\bar{\Pi}^N_{t_p}$
    to which $i$ belongs. 

    By independence of the increments of the flow of bridges, 
    the bridge $B_{t_p, t_{p+1}}$ is independent of the collection
    of variables $\big((\bar{\Pi}^N_t)_{t \le T_k}, \bar{\Pi}^N_{t_1},
    \dots, \bar{\Pi}^N_{t_p}, (V^{(t_p)}_i)_{i \ge 1}\big)$. 
    Thus, $(B_{t_p, t_{p+1}}, (V^{(t_p)}_i)_{i \ge 1})$
    are independent of $\big((\bar{\Pi}^N_t)_{t \le T_k}, \bar{\Pi}^N_{t_1},
    \dots, \bar{\Pi}^N_{t_p}\big)$, and hence $(\Pi^*, (V^*_i)_{i \ge 1})$ 
    are independent of $\big((\bar{\Pi}^N_t)_{t \le T_k}, \bar{\Pi}^N_{t_1},
    \dots, \bar{\Pi}^N_{t_p}\big)$.
    Using \Cref{lem:bridges}, we get that $\Pi^*$ is independent of $(V^*_i)_{i \ge 1}$.
    This shows that $(V^*_i)_{i \ge 1}$ is independent  
    $\big((\bar{\Pi}^N_t)_{t \le T_k}, \bar{\Pi}^N_{t_1}, \dots, \bar{\Pi}^N_{t_p}, \Pi^*\big)$. 
    Using~\eqref{eq:bridges}, we see that $\bar{\Pi}^N_{t_{p+1}}$ can be
    recovered from $\bar{\Pi}^N_{t_p}$ and $\Pi^*$. Thus, the variables
    $\big((\bar{\Pi}^N_t)_{t \le T_k}, \bar{\Pi}^N_{t_1}, \dots, \bar{\Pi}^N_{t_{p+1}}\big)$ 
    are independent of $(V^*_i)_{i \ge 1}$.
    
    In order to end the proof of the claim we need to distinguish two
    cases. First, suppose that $t_{p+1} < T_{k+1}$. Then, due to our
    labeling convention, we have that $(V^*_i)_{i \ge 1} =
    (V^{(t_{p+1})}_i)_{i \ge 1}$ (up to the auxiliary variables that play
    no role). Conversely, if $t_{p+1} = T_{k+1}$, then one of the 
    variables $(V^*_i)_{i \ge 1}$ has to be replaced by the ancestor 
    $U_{k+1}$ of the block $\Set{k+1}$. More precisely, if 
    $\bar{\Pi}^N_{T_{k+1}}$ has $M_{k+1}$ blocks, again by labeling
    convention, the block $\Set{k+1}$ has label $M_{k+1}$. Thus,
    $(V^{(t_{p+1})}_i)_{i \ge 1}$ is recovered by setting 
    $V^{(t_{p+1})}_i = V^*_i$ for $i \ne M_{k+1}$, and 
    $V^{(t_{p+1})}_i = U_{k+1}$ for $i = M_{k+1}$. It is straightforward
    to see that as $U_{k+1}$ is independent of all other variables,
    the sequence $(V^{(t_{p+1})}_i)_{i \ge 1}$ remains independent of
    $\big((\bar{\Pi}^N_t)_{t \le T_k}, \bar{\Pi}^N_{t_1}, \dots, \bar{\Pi}^N_{t_{p+1}}\big)$
    and thus that points (i) and (ii) of the claim hold.

    \medskip

    For $k \ge N$ and $t < T_{k+1} - T_k$ consider the partition
    $\Pi^k_t$ of $\Set{i \in \Z \suchthat N \le i \le k}$ defined as
    \[
        i \sim_{\Pi^k_t} j \iff B^{-1}_{T_k, T_k + t}(V^{(k)}_{b(i)}) 
        = B^{-1}_{T_k, T_k + t}(V^{(k)}_{b(j)}) 
    \]
    where $b(i)$ is the label of the block of $\bar{\Pi}^N_{T_k}$ to
    which $i$ belongs. As the sequence $(V^{(k)}_i)_{i \ge 1}$
    is i.i.d.\ uniform, the process $(\Pi^k_t)_{t < T_{k+1} - T_k}$
    is a version of Kingman's coalescent started from
    $\bar{\Pi}^N_{T_k}$. The that fact these coalescents are independent
    is a consequence of the previous induction. This proves (iii),
    and ends the proof of the result.
\end{proof}
    
    \subsection{Coupling erosion and immigration}
    \label{SS:couplingErosion}

We now explain the coupling between Kingman's coalescents with erosion
and with immigration. Let $n \ge 1$, consider a Poisson point process $P^n$
on $\R$ with intensity $nd \diff t$ and let $(T_i)_{i \in \Z}$ be
its atoms ordered increasingly such that $T_{0} < 0 < T_1$. To each atom of the process we
attach a uniform mark in $[n]$. We denote by $\ell_i$ the mark attached to $T_i$, so
that $(\ell_i)_{i \in \Z}$ is a sequence of i.i.d.\ uniform variables on
$[n]$.

Consider $t \in \R$. For each $k \in [n]$, let $\phi_t(k)$ be the label of the
last atom of $P^n$ with mark $k$, i.e., $\phi_t(k) \in \Z$ is the unique 
$i$ such that $\ell_i = k$ and there is no atom $T$ of $P^n$ with $T_i <
T \le t$ carrying mark $k$.
Let $(\bar{\Pi}_t)_{t \in \R}$ be Kingman's coalescent with immigration
rate $nd$ built from the Poisson process $(T_i)_{i \in \Z}$ as in
\Cref{SS:lookdown}. We define a partition $\Pi^n_t$ of
$[n]$ by setting
\[
    i \sim_{\Pi^n_t} j \iff \phi_t(i) \sim_{\bar{\Pi}_t} \phi_t(j).
\]
In words, $i$ and $j$ belong to the same block of $\Pi^n_t$ if{f} the last
particles of $(\bar{\Pi}_t)_{t \in \R}$ with marks $i$ and $j$ have coalesced before time $t$.
The key point of this construction is that $(\Pi^n_t)_{t \in \R}$ is 
distributed as Kingman's coalescent with erosion.
\begin{proposition} \label{prop:erosionImmigration}
    The process $(\Pi^n_t)_{t \in \R}$ is a stationary version of the 
    $n$-Kingman coalescent with erosion rate $d$.
\end{proposition}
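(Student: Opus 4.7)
My plan is to identify $(\Pi^n_t)_{t \in \R}$ as a stationary Markov chain on the partitions of $[n]$ whose jump rates agree with those of the $n$-Kingman coalescent with erosion — pair-mergers at rate $1$ and erosion of each element at rate $d$. Combined with \Cref{prop:bridgeConstruction} and the uniqueness of the stationary distribution stated in the introduction, this will pin down its law as the stationary version of that coalescent.

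The structural observation I would build on is that $\phi_t : [n] \to \Z$ is injective, so the blocks of $\Pi^n_t$ are in bijection with the blocks of $\bar{\Pi}_t$ meeting $\phi_t([n])$, via $B \mapsto \phi_t^{-1}(B)$. Between consecutive atoms of $P^n$, the map $\phi_t$ is frozen, so $\Pi^n_t$ can only change when two blocks of $\bar{\Pi}_t$ merge: a merger of two blocks both meeting $\phi_t([n])$ translates, through this bijection, into a merger of the corresponding pair of blocks of $\Pi^n_t$ at rate $1$, while a ``ghost'' merger involving at most one such block leaves $\Pi^n_t$ unchanged. When an atom $T_k$ with mark $\ell_k$ arrives, a new singleton $\Set{k}$ immigrates in $\bar{\Pi}_t$ and $\phi_t(\ell_k)$ jumps from its old value to $k$, so in $\Pi^n_t$ the element $\ell_k$ leaves its current block and becomes a singleton — exactly an erosion event for $\ell_k$. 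Since $P^n$ has intensity $n d$ and marks are i.i.d.\ uniform on $[n]$, each element is eroded at rate $d$, as required.

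These infinitesimal rates depend only on $\Pi^n_t$, and the randomness driving future transitions (the future of the marked Poisson process and the independent Kingman mergers interposed between its atoms) is independent of $\mathcal{F}^{\Pi^n}_t$, which will give the Markov property with the announced generator. The subtlety I expect to be the main work of the proof is justifying that the data in $(\bar{\Pi}_t, \phi_t)$ not already encoded by $\Pi^n_t$ does not leak into the future of $\Pi^n_t$: the ghost blocks of $\bar{\Pi}_t$ stay invisible because their mergers cannot alter $\Pi^n_t$, and the concrete values $\phi_t(k)$ are immaterial because labels of atoms in $P^n$ are never reused, so only the block structure of $\Pi^n_t$ matters for the subsequent dynamics. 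Stationarity will then come directly from the shift invariance of the construction: the marked Poisson process is stationary, and $(\bar{\Pi}_t)_{t \in \R}$ built from it is stationary, so the joint process $(\bar{\Pi}_t, (\phi_t(k))_{k \in [n]})$ is shift-invariant in law, and hence so is its measurable image $(\Pi^n_t)_{t \in \R}$.
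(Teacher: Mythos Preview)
Your proposal is correct and follows essentially the same approach as the paper: erosion arises from thinning the marked Poisson process (rate $d$ per element), pair-coalescence from the sampling consistency of Kingman's coalescent restricted to the last-mark particles (what you phrase as ghost mergers being invisible), and stationarity from shift-invariance of the Poisson construction. Your reference to \Cref{prop:bridgeConstruction} is superfluous here—the coupling in \Cref{SS:couplingErosion} is built from the construction of \Cref{SS:lookdown}, not the flow of bridges—but this does not affect the argument.
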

\begin{proof}
    Let $k \in [n]$. By thinning, the set of atoms of $P^n$ with mark $k$
    is a Poisson process on $\R$ with intensity $d \diff t$, and these
    processes are independent. Thus new
    atoms of $P^n$ with mark $k$ arrive at rate $d$. Let us consider
    what happens at such an arrival time. Suppose that 
    $\ell_i = k$. Then, by definition, we have $\phi_{T_i}(k) = i$,
    as the atom $T_i$ has mark $k$. Moreover, the particle $i$
    is a singleton of the partition $\bar{\Pi}_{T_i}$ (it is the 
    particle that has newly immigrated). Thus at time $T_i$, the
    integer $k$ is removed from its block and placed in a singleton
    block. This is the description of an erosion event, which occur
    at rate $d$.

    Let us now describe the dynamics between immigration times. The
    atoms of $P^n$ that are the last atoms with their marks form a 
    subset of the atoms $P^n$. By sampling consistency of Kingman's 
    coalescent, the restriction of the process $(\bar{\Pi}_t)_{t \in \R}$
    to these atoms is also distributed as Kingman's coalescent.
    Thus any two pairs of blocks of such atoms with a last mark coalesce
    at rate one, and so does the blocks of $(\Pi_t)_{t \in \R}$.

    The fact that $(\Pi_t)_{t \in \R}$ is stationary follows from
    the stationarity of the Poisson point process.
\end{proof}

Combined with the construction of Kingman's coalescent with immigration
from the standard flow of bridges, this coupling gives an interesting
construction of the stationary distribution of Kingman's coalescent with
erosion.
\begin{corollary} \label{cor:erosionLookdown}
    Let $(B_{s,t})_{s \le t}$ be a standard flow of bridges, 
    $(T_i)_{i \ge 1}$ and $(U_i)_{i \ge 1}$ be independent sequences
    of i.i.d.\ exponential variables with parameter $d$, and of uniform
    variables respectively. Then the partition $\Pi$ defined by
    \[
        i \sim_{\Pi} j \iff B^{-1}_{-T_i, 0}(U_i) = B^{-1}_{-T_j, 0}(U_j)
    \]
    has the stationary distribution of Kingman's coalescent with erosion
    rate $d$.
\end{corollary}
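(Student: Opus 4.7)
The plan is to combine Propositions~\ref{prop:bridgeConstruction} and~\ref{prop:erosionImmigration}, reducing the statement to a distributional identity for i.i.d.\ waiting-time-and-location pairs obtained from a marked Poisson point process. Fix $n \ge 1$; I will show that the restriction of $\Pi$ to $[n]$ has the law of the stationary $n$-Kingman coalescent with erosion, and since this holds for every $n$, the uniqueness statement in the first proposition of the paper identifies $\Pi$ as the claimed stationary distribution.

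First I would construct a joint coupling as follows. Take the standard flow of bridges $(B_{s,t})_{s \le t}$ independent of a Poisson point process $P^n$ on $\R$ of intensity $nd\diff t$, whose atoms carry independent uniform marks $\ell \in [n]$ and independent uniform locations $V \in [0,1]$. Projecting the atoms onto their times and locations yields a PPP on $\R \times [0,1]$ of intensity $nd\diff t \otimes \diff x$, so Proposition~\ref{prop:bridgeConstruction} builds from it a Kingman coalescent with immigration $(\bar\Pi_t)_{t \in \R}$ at rate $nd$. Applying Proposition~\ref{prop:erosionImmigration} with the marks of $P^n$ then produces a stationary version $(\Pi^n_t)_{t \in \R}$ of the $n$-Kingman coalescent with erosion rate $d$, under the rule that $i \sim_{\Pi^n_0} j$ iff the most recent atoms carrying marks $i$ and $j$ have coalesced in $\bar\Pi_0$.

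The second step is a thinning observation. By independent thinning along the mark values, for each $k \in [n]$ the atoms of $P^n$ carrying mark $k$ form, independently across $k$, a rate-$d$ PPP on $\R$ with independent uniform locations in $[0,1]$. Let $-T^\star_k$ and $U^\star_k$ denote the time and location of the most recent such atom before time $0$. Then $(T^\star_k, U^\star_k)_{k \in [n]}$ is i.i.d.\ with $T^\star_k \sim \mathrm{Exp}(d)$ and $U^\star_k$ uniform in $[0,1]$, and is independent of $(B_{s,t})$. By the flow-of-bridges characterisation of coalescence in Proposition~\ref{prop:bridgeConstruction}, the coupling rule becomes exactly
\[
    i \sim_{\Pi^n_0} j \iff B^{-1}_{-T^\star_i, 0}(U^\star_i) = B^{-1}_{-T^\star_j, 0}(U^\star_j).
\]
Since $(T_i, U_i)_{i \in [n]}$ in the statement of the corollary has the same joint law as $(T^\star_k, U^\star_k)_{k \in [n]}$ and is independent of the flow, the restriction $\Pi|_{[n]}$ therefore inherits the law of $\Pi^n_0$, and letting $n \to \infty$ concludes.

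I do not foresee any serious obstacle here: the only delicate point is bookkeeping the independences between the flow of bridges, the immigration times, and the uniform marks when stacking the two previous constructions together.
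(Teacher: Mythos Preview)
Your proposal is correct and follows essentially the same route as the paper: both arguments stack Proposition~\ref{prop:bridgeConstruction} and Proposition~\ref{prop:erosionImmigration} on a marked Poisson point process of intensity $nd$, identify the last atom with mark $k$ before time $0$ as having an independent $\mathrm{Exp}(d)$ time and uniform location, and match this with the $(T_i,U_i)$ of the statement. Your write-up is simply more explicit about the restriction to $[n]$ and the consistency step as $n\to\infty$, which the paper leaves implicit.
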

\begin{proof}
    Consider a Poisson process $P^n$ on $\R \times [0, 1]$ with intensity
    $n d \diff t \otimes \diff x$, and attach to each atom of $P^n$ a uniform mark
    on $[n]$. If $(T_i, U_i)$ denotes the last atom of $P^n$ with mark
    $i$ before $t=0$, then $T_i$ is exponentially distributed with parameter $d$, 
    $U_i$ is uniform on $[0, 1]$, and all these variables are
    independent. A combination of
    \Cref{prop:erosionImmigration} and
    \Cref{prop:bridgeConstruction} now proves the result.
\end{proof}

\begin{remark}
    The construction of Kingman's coalescent with immigration from
    \Cref{SS:lookdown} and the construction with the flow of bridges of
    \Cref{SS:coupling} only rely on the sampling consistency of 
    Kingman's coalescent. These constructions could be extended directly
    to a case where the coalescence events occur according to a 
    $\Lambda$-coalescent~\citep{pitman_coalescents_1999,
    sagitov_general_1999}.
    In particular, the construction of the stationary distribution
    of Kingman's coalescent with erosion of
    \Cref{cor:erosionLookdown} extends directly to
    $\Lambda$-coalescents with erosion if one replaces the standard
    flow of bridges by the corresponding $\Lambda$-flow of bridges.
\end{remark}

\section{Size of the blocks of Kingman's coalescent with immigration}
\label{S:immigration}

In this section we study Kingman's coalescent with immigration.
The main result we will show is the following.

\begin{proposition} \label{prop:blockSizeImmigration}
    Let $n \ge 1$ and consider $(\bar{\Pi}^n_t)_{t \in \R}$ a version of Kingman's
    coalescent with immigration rate $nd$. Let $(\abs{\bar{C}^n_1},
    \dots, \abs{\bar{C}^n_p})$ be the size of $p$ blocks chosen uniformly from
    $\bar{\Pi}^n_0$, then
    \[
        (\abs{\bar{C}^n_1}, \dots, \abs{\bar{C}^n_p}) \Longrightarrow (J_1, \dots, J_p)
    \]
    where $(J_1, \dots, J_p)$ are i.i.d.\ variables distributed as
    the total progeny of a critical binary branching process.
\end{proposition}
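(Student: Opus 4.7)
The plan is to analyse the backward genealogy of a uniformly sampled block and to show that, after an appropriate time rescaling, it converges to a critical binary birth-death process whose total progeny matches $|\bar C^n_1|$.

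First I would fix a tagged block $C = \bar C^n_1$ at time $0$ and define $\eta_s$ to be the number of blocks of $\bar\Pi^n_{-s}$ that are subsets of $C$; then $\eta_0 = 1$ and $|C|$ equals the total number of immigration events contributing integers to $C$. Analysing the transitions of $\eta_s$ reveals two relevant event types: a \emph{backward split} $\eta \mapsto \eta+1$, arising from a forward coalescence of two ancestors of $C$, and a \emph{backward death} $\eta \mapsto \eta-1$, arising from a forward immigration of an integer lying in $C$. The block size $|C|$ is then precisely the total number of backward deaths before $\eta$ is absorbed at $0$.

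Next I would rescale time by $s \mapsto s/\sqrt{2nd}$, so that $M_{-s}$ concentrates near its stationary value $\sqrt{2nd}$ (as read off Corollary \ref{cor:immigrationBlockMarkov}). Using the flow-of-bridges construction of Proposition \ref{prop:bridgeConstruction}, the key additional state variable is the total mass $\mathcal{B}_s$ of $C$'s ancestor jumps in $B_{-s,0}$; this mass evolves as a Wright-Fisher-type diffusion inherited from the Fleming-Viot structure of the flow. The main step---and the hardest one---will be to show that the backward split rate and the backward death rate balance in the limit, so that the embedded chain of $\eta$ at its event times converges to a simple symmetric random walk on $\mathbb{N}$ started at $1$ and absorbed at $0$. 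A delicate time-reversal argument using the joint distribution of $(\eta_s, M_{-s}, \mathcal{B}_s)$ under the Palm measure associated with a uniformly sampled block is needed here, because the direct identification of backward rates with forward rates fails (for instance, the backward split rate out of $\eta = 1$ cannot equal the forward rate $\binom{1}{2} = 0$, reflecting that the joint process is not reversible).

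Once this convergence is established, a standard ballot-problem computation identifies the distribution of the number of downward steps of such a walk as $\frac{1}{2^{2k-1}}\frac{1}{k}\binom{2(k-1)}{k-1}$, matching $\mathbb{P}(J = k)$ and yielding the single-block conclusion $|\bar C^n_1| \Rightarrow J$. For the $p$-block case, the sampled blocks correspond to $p$ distinct atom positions in the flow of bridges; since each atom's lineage carries mass of order $1/\sqrt{2nd}$, the interactions between distinct lineages vanish in the limit, and the joint convergence to $p$ i.i.d.\ copies of $J$ follows by combining the exchangeability of the standard flow with the independence of its increments.
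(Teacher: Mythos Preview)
Your overall strategy---trace the backward genealogy $\eta_s$ of a tagged block, show it converges after rescaling to a critical binary birth-death process, and read off $|\bar C^n_1|$ as its total progeny---is exactly the paper's. But you misdiagnose the key step. You assert that ``direct identification of backward rates with forward rates fails'' and propose to repair this via an auxiliary Wright--Fisher mass $\mathcal{B}_s$ from the flow of bridges together with a Palm-measure time-reversal. In fact the paper obtains the backward rates \emph{directly}: the block-counting process $(M_t)$ is reversible (an explicit detailed-balance computation, \Cref{lem:reversible}), and an exchangeability/path-counting argument (\Cref{lem:couting}) then shows that the full ancestral process $(\Ai_t(1),\dots,\Ai_t(p),N_t)$ is Markov with per-particle birth rate $nd/N_t$ and per-particle death rate $(N_t-1)/2$ (\Cref{prop:ancestralMarkov}, \Cref{cor:sampleMarkov}). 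Your parenthetical about $\binom{1}{2}=0$ conflates two different objects: the backward split rate is not a forward coalescence rate of $\eta$ internal blocks, but the rate at which \emph{some} pair among the $N_t$ present blocks coalesces into the tagged lineage, and this is nonzero even when $\eta=1$. The flow-of-bridges construction plays no role in this proposition; it is used only later for the coupling with erosion and for \Cref{Thm:frequencies}.

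Once the explicit rates are in hand, the convergence step is a routine density-dependent Markov-chain limit: one first shows $N^n_{t/\sqrt{n}}/\sqrt{n}\to\sqrt{2d}$ (tightness of the stationary law plus an ODE argument, \Cref{lem:tightness} and \Cref{prop:convergenceImmigration}), at which point both per-particle rates equal $\sqrt{d/2}$ in the limit and a Poisson-driven SDE representation yields the joint convergence of $(\Ai^n_{\cdot/\sqrt{n}}(1),\dots,\Ai^n_{\cdot/\sqrt{n}}(p))$ to $p$ independent critical binary birth-death processes. Asymptotic independence for the $p$-block case thus falls out of the Markov description and requires no separate bridge-mass argument. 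Your route via $\mathcal{B}_s$ and Palm measures may be salvageable, but as written it is vague precisely at the point that does all the work, and it bypasses the much simpler mechanism (reversibility of $M$ plus counting) that the paper exploits.
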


We prove this result by choosing $k$ blocks uniformly from $\bar{\Pi}^n_0$,
and counting backwards in time the number of blocks that are
ancestors of these blocks, i.e., that will further coalesce to
form these blocks. We show that this process converges, under
appropriate scaling, to $k$ independent critical binary branching
processes, yielding the result.

We first give a precise definition of the ancestral process counting the
number of blocks in \Cref{SS:ancestral}, along with its
basic properties. The convergence is then carried out in
\Cref{SS:ancestralConvergence}.

    \subsection{The ancestral process} \label{SS:ancestral}

Let $(\bar{\Pi}_t)_{t \in \R}$ be a version of Kingman's coalescent with
immigration rate $d$. The process $(\bar{\Pi}_t)_{t \in \R}$ is naturally
endowed with a notion of ancestry between its blocks. 
For $t \in \R$, let $M_t$ be the number of blocks of $\bar{\Pi}_t$.
Let $(\bar{C}_1, \dots, \bar{C}_{M_t})$ be an enumeration of the blocks
of $\bar{\Pi}_t$. We say that this enumeration is exchangeable
if conditional on $\Set{M_t = k}$, for any permutation $\sigma$ of
$[k]$, 
\[
    (\bar{C}_1, \dots, \bar{C}_{k}) \overset{\text{(d)}}{=} 
    (\bar{C}_{\sigma(1)}, \dots, \bar{C}_{\sigma(k)}).
\]
We can always consider an exchangeable enumeration of
the blocks of $\bar{\Pi}_t$ by changing the labels of any enumeration
according to an independent uniform permutation.

For $s \le t$, consider $\bar{\Pi}_t = (\bar{C}_1, \dots, \bar{C}_{M_t})$ and 
$\bar{\Pi}_s = (\bar{C}'_1, \dots, \bar{C}'_{M_s})$ an enumeration of the 
blocks of $\bar{\Pi}_t$ and $\bar{\Pi}_s$ respectively. In Kingman's
coalescent with immigration, a block present at time $s$ can only
coalesce with other blocks. Thus, for any block $\bar{C}'_i$, there is a
unique block $\bar{C}_j$ of $\bar{\Pi}_t$ such that $\bar{C}'_i \subseteq
\bar{C}_j$. We say that $\bar{C}'_i$ is an ancestor of $\bar{C}_j$.

\begin{definition}
    Let $(\bar{\Pi}_t)_{t \ge 0}$ be Kingman's coalescent with
    immigration, and let $(\bar{C}_1, \dots, \bar{C}_{M_0})$
    be the blocks of $\bar{\Pi}_0$ enumerated in an exchangeable
    order. For each $t \ge 0$ and $i \le M_0$, we define $\Ai_t(i)$
    to be the number of blocks of $\bar{\Pi}_{-t}$ that are ancestors of
    $\bar{C}_{i}$. We set $\Ai_t(i) = 0$ for $i > M_0$. Then defining
    $\Ai_t \defas (\Ai_t(1), \Ai_t(2), \dots)$, the process $(\Ai_t)_{t \ge 0}$ 
    is called the ancestral process associated to $(\bar{\Pi}_t)_{t \in \R}$.
\end{definition}

The definition of the ancestral process is illustrated in \Cref{fig:ancestral}.
The process $(\Ai_t)_{t \ge 0}$ can be seen as a particle system where at time $0$,
there are $M_0$ particles with distinct types, and $(\Ai_t(i))_{t \ge 0}$ 
records the number of particles with type $i$. As we have reversed
time, each coalescence event now corresponds to the birth of a new
particle, and each immigration event to the death of a particle.

\begin{figure}
    \center
    \includegraphics{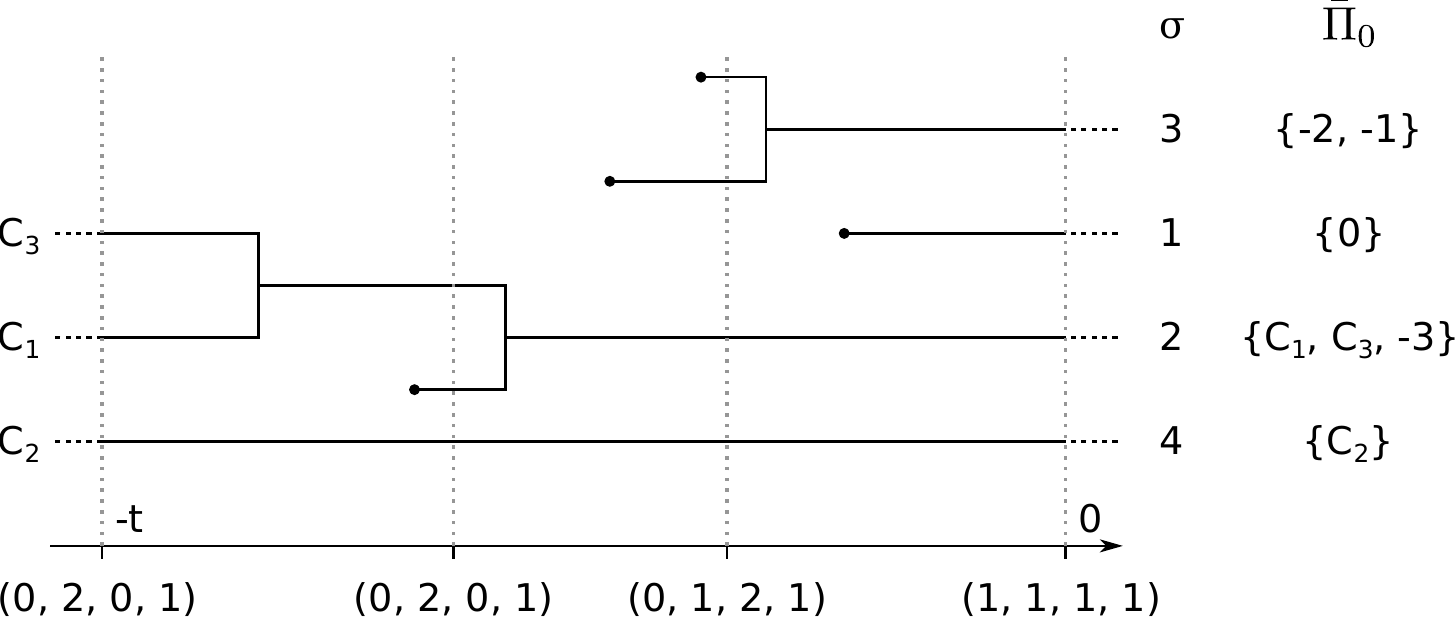}
    \caption{In this example, we have $\bar{\Pi}_{-t} = (C_1, C_2, C_3)$.
    Each black circle represents an immigration event, and the lines
    merge at the coalescence time of the blocks to which they correspond.
    At $t = 0$ the blocks of $\bar{\Pi}_0$ are labeled according to the 
    permutation $\sigma$, and the value of $(\Ai_t)_{t \ge 0}$ is given
    below for some times.}
    \label{fig:ancestral}
\end{figure}

\bigskip

Recall that $(M_t)_{t \in \R}$ stands for the number of blocks
of $(\bar\Pi_t)_{t \in \R}$ forward in time. For each $t \in \R$, we
define $N_t \defas M_{-t}$, the number of blocks of $(\bar{\Pi}_t)_{t \in
\R}$ backwards in time. The process $(N_t)_{t \ge 0}$ also gives the number of 
particles of the ancestral process $(\Ai_t)_{t \ge 0}$, that is we
have 
\[
    \forall t \ge 0,\; N_t = \sum_{i \ge 1} \Ai_t(i).
\] 
The following proposition shows that the ancestral process is Markovian.
This is a key feature that makes Kingman's coalescent with immigration
easier to study than Kingman's coalescent with erosion.

\begin{proposition} \label{prop:ancestralMarkov}
    Let $(\Ai_t)_{t \ge 0}$ be the ancestral process associated
    to Kingman's coalescent with immigration rate $d$, and
    let $(N_t)_{t \ge 0}$ be the number of particles of $(\Ai_t)_{t \ge
    0}$. Then $(\Ai_t)_{t \ge 0}$ is a Markov process with initial
    condition
    \[
        \forall i \le N_0,\; \Ai_0(i) = 1, 
        \quad \forall i > N_0,\; \Ai_0(i) = 0.
    \]
    Moreover, conditionally on $\Ai_t$:
    \begin{itemize}
        \item each particle gives birth to a new particle of its type
        at rate $d/N_t$.
        \item each particle dies at rate $(N_t-1)/2$.
    \end{itemize}
\end{proposition}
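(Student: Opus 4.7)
The plan is to derive the claim from the stationary Markov structure of $(\bar{\Pi}_t)_{t \in \R}$ and the exchangeability of the enumeration $(\bar C_1, \dots, \bar C_{M_0})$ of $\bar\Pi_0$. The initial condition is immediate: each block $\bar C_i$ of $\bar\Pi_0$ is its own unique ancestor in $\bar\Pi_0$, so $\Ai_0(i) = 1$ for $i \le N_0$ and $\Ai_0(i) = 0$ otherwise.

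For the Markov property, stationarity of $(\bar\Pi_t)_{t \in \R}$ implies that the time-reversed partition-valued process $(\bar\Pi_{-t})_{t \ge 0}$ is itself Markov. Since $\Ai_t$ is a deterministic function of $\bar\Pi_{-t}$ and of the typing inherited from the time-$0$ enumeration, and since exchangeability of this enumeration forces the conditional law of the types-to-blocks assignment given $\Ai_t$ to be uniform among the compatible assignments, the law of $(\Ai_s)_{s \ge t}$ depends on the past only through $\Ai_t$.

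The transition rates then follow from time-reversal of $(M_t)_{t \in \R}$. A short detailed balance computation (with $\pi(k+1)/\pi(k) = 2d/(k(k+1))$, which is summable) shows that the birth-death chain of \Cref{cor:immigrationBlockMarkov} is reversible, hence $N_t = M_{-t}$ has the same transition rates: from $k$, it jumps to $k+1$ at rate $d$ and to $k-1$ at rate $k(k-1)/2$. In backward time, a $k \to k+1$ transition is the reversal of a forward coalescence — one block of $\bar\Pi$ splits into two — and because any forward coalescence can only merge two blocks sharing the same time-$0$ descendant (otherwise the merged block could not sit inside a single $\bar C_j$), the two daughters produced by the split necessarily carry the parent's type. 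This matches the statement that the newborn particle is ``of its type''. A $k \to k-1$ transition is the reversal of a forward immigration, so one block simply disappears.

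It remains to distribute the total backward rates $d$ and $k(k-1)/2$ across the $N_t$ blocks. By the combined exchangeability of the time-$0$ enumeration and of the Poisson immigration process of \Cref{SS:lookdown}, the block involved in each event is uniform among the $N_t$ current blocks conditionally on $\Ai_t$, so it belongs to type $i$ with probability $\Ai_t(i)/N_t$. This yields total birth and death rates $d\,\Ai_t(i)/N_t$ and $\Ai_t(i)(N_t-1)/2$ within type $i$, i.e., the per-particle rates $d/N_t$ and $(N_t-1)/2$ as claimed. The main obstacle is justifying this uniform-pick step rigorously: exchangeability of the time-$0$ labeling constrains symmetry across types but one still needs to argue that the block selected in each backward event is uniform among the $N_t$ physical blocks. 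The cleanest route is probably to read the split and death events directly off the flow-of-bridges construction of \Cref{SS:coupling}, where this symmetry is manifest from the invariance of the underlying Poisson process and the exchangeability of the bridges.
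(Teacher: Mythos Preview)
Your outline is structurally correct and the reversibility computation for $(M_t)$ matches the paper's \Cref{lem:reversible}. You also correctly isolate the crux: why, in backward time, the block that splits or dies is uniform among the $N_t$ current blocks (and hence lands in type $i$ with probability $\Ai_t(i)/N_t$). You flag this yourself as ``the main obstacle'' and do not prove it, so the proposal is incomplete exactly where the real work lies. Note too that your Markov-property argument hides the same gap: the assertion that ``the conditional law of the types-to-blocks assignment given $\Ai_t$ is uniform among the compatible assignments'' is precisely what needs to be established, not assumed.

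The paper does \emph{not} resolve this via the flow-of-bridges symmetry you propose. Instead it proves a combinatorial counting lemma (\Cref{lem:couting}): fixing an ancestral sequence $(a_0,\dots,a_n)$ and a starting partition $\bar\pi_{-n}$ with $\abs{a_n}$ blocks, the number of forward trajectories $(\bar\Pi_{-n},\dots,\bar\Pi_0)$ together with labelings $\sigma$ of $\bar\Pi_0$ producing this ancestral sequence equals $\tfrac{\abs{a_n}!}{2^b}\,a_0(\ell_0)\cdots a_{n-1}(\ell_{n-1})$, where $\ell_k$ is the type affected at step $k$ and $b$ the number of births. Since every such trajectory has the same conditional probability given the block-count sequence $(\abs{a_0},\dots,\abs{a_n})$, one gets $\Prob{\Ai_0=a_0,\dots,\Ai_n=a_n} \propto \prod_k a_k(\ell_k)$ with a proportionality constant depending only on the $\abs{a_k}$. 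This product form delivers the Markov property and the uniform-pick statement in one stroke. Your symmetry heuristic is the right intuition, but the paper converts it into a proof by explicitly counting forward trajectories (equivalently, labelled forests) rather than by appealing to bridge exchangeability.
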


The proof of \Cref{prop:ancestralMarkov} can be found in
\Cref{A:counting}, we only sketch it here. The process $(M_t)_{t \in \R}$
is a stationary birth-death process, with rates given in
\Cref{cor:immigrationBlockMarkov}. A simple calculation
shows that it is actually a reversible process, i.e., with our notation,
that $(N_t)_{t \ge 0}$
is distributed as $(M_t)_{t \ge 0}$. When $(N_t)_{t \ge 0}$ jumps 
from $k$ to $k+1$, a particle has given birth to two particles. By
exchangeability of our system, the particle that gives birth is
chosen uniformly, i.e., each particle gives birth at the same rate 
$d/k$. Similarly, when $(N_t)_{t \ge 0}$ jumps from $k$ to $k-1$
a particle chosen uniformly from the population dies. Thus each
particle dies at rate $k(k-1)/(2k) = (k-1)/2$.

Making the above argument rigorous involves counting the number of
trajectories of $(\bar{\Pi}_t)_{t \in \R}$ yielding a given trajectory
of $(\Ai_t)_{t \ge 0}$. We postpone it until \Cref{A:counting}.

\bigskip

In order to prove \Cref{prop:blockSizeImmigration}, we need
to keep track of the number of ancestors of $k$ blocks chosen
uniformly from $\bar{\Pi}_0$. As we have chosen a uniform labeling of the 
blocks of $\bar{\Pi}_0$, this amounts to considering the process
$(\Ai_t(1), \dots, \Ai_t(k);\, t \ge 0)$.
\Cref{prop:ancestralMarkov} directly gives us the distribution
of this process.

\begin{corollary} \label{cor:sampleMarkov}
    The process $(\Ai_t(1), \dots, \Ai_t(p), N_t;\, t \ge 0)$ is 
    a Markov process such that conditional on 
    $\Set{\Ai_t(1) = a_1, \dots, \Ai_t(p) = a_p, N_t = k}$, the process
    jumps to:
    \begin{itemize}
        \item $(a_1, \dots, a_i + 1, \dots, a_p, k+1)$ at rate $\frac{d}{k} a_i$.
        \item $(a_1, \dots, a_i - 1, \dots, a_p, k-1)$ at rate $\frac{k-1}{2}a_i$.
        \item $(a_1, \dots, a_p, k+1)$ at rate $\frac{d}{k}(k-a_1-\dots-a_p)$.
        \item $(a_1, \dots, a_p, k-1)$ at rate $\frac{k-1}{2}(k-a_1-\dots-a_p)$.
    \end{itemize}
\end{corollary}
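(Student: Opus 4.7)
The plan is to deduce this corollary as a direct projection of the full ancestral process $(\Ai_t)_{t \ge 0}$ described by \Cref{prop:ancestralMarkov}. First, I would observe that by that proposition the joint Markov process $(\Ai_t(1), \Ai_t(2), \ldots)$ has transitions where each of the $N_t$ individual particles, independently of its type, gives birth at rate $d/N_t$ and dies at rate $(N_t-1)/2$. In particular, the total jump intensity out of a state depends on the type vector only through the counts $\Ai_t(i)$ and the aggregate $N_t$, and the destination after a jump affecting a particle of type $j>p$ is determined (as far as the coordinates $(\Ai_t(1), \ldots, \Ai_t(p), N_t)$ are concerned) solely by whether that jump is a birth or a death, not by the specific value of $j$. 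This is precisely the content needed to conclude that the projection $(\Ai_t(1), \ldots, \Ai_t(p), N_t)$ inherits the Markov property.

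Next I would compute the rates by enumerating the four possible types of transitions out of a state $(a_1, \ldots, a_p, k)$. A birth from a type-$i$ particle with $i \le p$ increments both $a_i$ and $k$, and occurs at total rate $a_i \cdot d/k$; a death of such a particle decrements both $a_i$ and $k$, and occurs at rate $a_i \cdot (k-1)/2$. For the $k - a_1 - \cdots - a_p$ particles carrying an ``invisible'' type $j > p$, the destination in the projected chain is the same regardless of $j$: a birth contributes rate $(d/k)(k - a_1 - \cdots - a_p)$ to the transition $(a_1, \ldots, a_p, k+1)$, and a death contributes rate $((k-1)/2)(k - a_1 - \cdots - a_p)$ to $(a_1, \ldots, a_p, k-1)$. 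These are exactly the four rates listed in the statement.

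The main point requiring care is the argument that aggregating the unseen coordinates $\Ai_t(p+1), \Ai_t(p+2), \ldots$ into the single counter $N_t - a_1 - \cdots - a_p$ preserves the Markov property; generically, projecting a Markov chain onto a coarser state space yields a non-Markov process. Here it works because the per-particle rates in \Cref{prop:ancestralMarkov} depend on the configuration only through $N_t$, so the conditional rate of any transition affecting only the aggregate count factors through the aggregate itself. Once this is noted the corollary reduces to reading off the rates above, with no additional computation required.
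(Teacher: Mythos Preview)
Your proposal is correct and follows essentially the same approach as the paper: both deduce the corollary from \Cref{prop:ancestralMarkov} by observing that the per-particle birth and death rates depend on the configuration only through the total count $N_t$, so that the projection onto $(\Ai_t(1),\dots,\Ai_t(p),N_t)$ is again Markov with the stated rates. Your write-up is simply a more detailed version of the paper's one-sentence proof.
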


\begin{proof}
    We see from the expression of the transition rates of $(\Ai_t)_{t \ge
    0}$ that the rate at which each particle splits or dies only depends
    on the rest of the population through the total population 
    size $N_t$. This is enough to prove the result. 
\end{proof}

    \subsection{Convergence}
    \label{SS:ancestralConvergence}

We now prove that the process $(\Ai_t(1), \dots, \Ai_t(p);\, t \ge 0)$
converges to independent critical binary birth-death processes when time is
rescaled by a factor $1/\sqrt{n}$. We start with the following lemma.
\begin{lemma} \label{lem:tightness}
    Let $M^n$ have the stationary distribution of $(M^n_t)_{t \ge 0}$, 
    the number of blocks of Kingman's coalescent with immigration rate
    $dn$. The sequence $(M^n/\sqrt{n};\, n \ge 1)$ is tight.
\end{lemma}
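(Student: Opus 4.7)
The plan is to bound the second moment of the stationary variable $M^n$ directly from the explicit jump rates of the block-counting process. By \Cref{cor:immigrationBlockMarkov}, $(M^n_t)_{t \in \R}$ is a stationary birth-death Markov chain on $\N$ that jumps from $k$ to $k+1$ at rate $nd$ and from $k$ to $k-1$ at rate $k(k-1)/2$. Since the death rate grows without bound while the birth rate is constant, this chain is positive recurrent, and its invariant law $\pi$ is supported on $\Set{k \ge 1}$ (once the chain leaves state $0$ it can never return, because the death rate from state $1$ vanishes) and satisfies the detailed balance identity
\[
    \pi_k \cdot nd = \pi_{k+1} \cdot \frac{k(k+1)}{2}, \qquad k \ge 1.
\]

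Summing this relation over $k \ge 1$ and using $\sum_{k \ge 1} \pi_k = 1$ gives the exact moment identity
\[
    nd = \sum_{k \ge 2} \pi_k \cdot \frac{(k-1)k}{2} = \frac{1}{2}\, E\!\left[M^n(M^n - 1)\right],
\]
so that $E[(M^n)^2] = 2nd + E[M^n]$. Combined with Jensen's inequality $E[M^n] \le \sqrt{E[(M^n)^2]}$, this yields $E[(M^n)^2] \le \bigl(1 + \sqrt{2nd}\bigr)^2 = O(n)$. Chebyshev's inequality then gives, for any $K > 0$,
\[
    \Prob{M^n/\sqrt{n} > K} \le \frac{E[(M^n)^2]}{K^2 n} \le \frac{(1+\sqrt{2nd})^2}{K^2 n},
\]
whose limit superior as $n \to \infty$ is $2d/K^2$. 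Choosing $K$ large makes the right-hand side arbitrarily small uniformly in $n$, which is precisely the tightness of $(M^n/\sqrt{n})_{n \ge 1}$.

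There is no real obstacle in this argument: it is a short second-moment computation using only stationarity and the explicit birth-death rates, and the only mild point worth checking is that $\pi_0 = 0$ so that detailed balance can be summed cleanly. As a pleasant byproduct, the identity $E[M^n(M^n-1)] = 2nd$ already singles out $\sqrt{2d}$ as the only candidate limit for $M^n/\sqrt{n}$ in probability, in agreement with \Cref{prop:sizeErosion}(i) which will be established later.
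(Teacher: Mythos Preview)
Your argument is correct and takes a genuinely different route from the paper. The paper proves tightness by stochastic domination: it introduces an auxiliary birth--death process $(X^n_t)$ with the same birth rate $dn$ but with death rate truncated to the constant $\tfrac{(\sqrt{2dn}+1)\sqrt{2dn}}{2}$ above the level $\sqrt{2dn}+1$ and zero below it; the stationary law of $X^n$ is a shifted geometric with explicitly computable parameter, and since the true death rate dominates $\mu_k$, the law of $M^n$ is stochastically below that of $X^n$, whence tightness. Your approach bypasses the comparison process entirely: summing the detailed balance relation $\pi_k \cdot nd = \pi_{k+1}\cdot \tfrac{k(k+1)}{2}$ produces the exact identity $E[M^n(M^n-1)] = 2nd$, and Markov's inequality on $(M^n)^2$ finishes the job. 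This is shorter and yields more information (the precise second factorial moment, which already pins down $\sqrt{2d}$ as the candidate limit), though the paper's domination argument is perhaps more robust in situations where detailed balance is not available or moments are hard to identify. One cosmetic remark: what you call Chebyshev's inequality is really Markov's inequality applied to $(M^n)^2$; and the finiteness of the second moment is implicitly justified by the summation itself, since the left-hand side $nd$ is finite.
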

\begin{proof}
    Let $n \ge 1$ and consider a birth-death process $(X^n_t)_{t \ge 0}$
    such that conditional on $\Set{X^n_t = k}$, the process jumps to 
    \begin{itemize}
        \item $k+1$ at rate $dn$;
        \item $k-1$ at rate $\mu_k$,
    \end{itemize}
    where the death rate $\mu_k$ is defined as  
    \[
        \mu_k = \begin{cases}
            0 &\text{ if $k < \sqrt{2dn} + 1$,}\\
            \frac{(\sqrt{2dn}+1)\sqrt{2dn}}{2} &\text{ else.}
        \end{cases}
    \]
    The process $(X^n_t - \lfloor\sqrt{2dn}+1\rfloor;\, t \ge 0)$ is
    distributed as a simple random walk, reflected at $0$. Thus it admits a
    geometric stationary distribution with parameter $\gamma_n$ given
    by
    \[
        \gamma_n =
        \frac{2dn}{(\sqrt{2dn}+1)\sqrt{2dn}}
        = \frac{1}{1 + \sqrt{\frac{1}{2dn}}}.
    \]
    This shows that the process $(X^n_t)_{t \ge 0}$ also
    admits a stationary distribution. If $X^n$ has the 
    stationary distribution of $(X^n_t)_{t \ge 0}$, then
    $X^n$ is distributed as $\lfloor\sqrt{2dn}\rfloor + 1 + Y^n$, where $Y^n$
    has a geometric distribution with parameter $\gamma_n$.

    Hence, for $K$ and $n$ large enough, we have
    \begin{align*}
        \Prob{X^n \le K \sqrt{n}} &\le \Prob{Y^n \le K \sqrt{n} -
        \sqrt{2dn}}\\
        &= 1-\gamma_n^{(K-\sqrt{2d})\sqrt{n}}\\
        &= 1-\exp(-\frac{K-\sqrt{2d}}{\sqrt{2d}}) + o_n(1).
    \end{align*}
    Thus the sequence $(X^n / \sqrt{n};\, n \ge 1)$ is tight.

    Recall that $(M^n_t)_{t \ge 0}$ is a birth-death process
    jumping from $k$ to $k+1$ at rate $dn$, and from $k$ to 
    $k-1$ at rate $k(k-1)/2 \ge \mu_k$. Its stationary distribution
    is thus dominated by that of $X^n$, and this proves the result.
\end{proof}

We now prove our main convergence result. The proof will use 
a result from Chapter~11 of~\cite{ethier_1986} on the a.s.\ convergence
of rescaled Markov processes. In order to stick to their notation,
we introduce
\[
    \forall t \ge 0,\; \hat{N}^n_t = N^n_{t/\sqrt{n}}, \quad
    \hatA = \Ai^n_{t/\sqrt{n}},
\]
and 
\[
    \forall x \ge 0,\; \beta_+(x) = d,\; \beta_-(x) = \frac{x^2}{2},\;
    F(x) = d - \frac{x^2}{2}
\]

\begin{proposition} \label{prop:convergenceImmigration}
    Let $(\Ai^n_t)_{t \ge 0}$ be the ancestral process of
    Kingman's coalescent with immigration rate $dn$.
    Then 
    \[
        \big( \hatA(1), \dots, \hatA(p),
        \frac{\hat{N}^n_t}{\sqrt{n}};\, t \ge 0 \big)
        \Longrightarrow 
        \big( X_1(t), \dots, X_p(t), \sqrt{2d};\, t \ge 0 \big),
    \]
    in the sense of convergence in distribution in the Skorohod space,
    and where the processes $(X_1, \dots, X_p)$ are
    i.i.d.\ critical binary birth-death processes, with per-capita birth
    and death rate $\sqrt{d/2}$.
\end{proposition}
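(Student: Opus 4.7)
The plan is to exhibit a fast--slow decomposition at the generator level and then conclude by the averaging / generator-convergence theorem of Chapter~11 of \cite{ethier_1986}. By \Cref{cor:sampleMarkov} applied with immigration rate $nd$, the rescaled joint chain $(\hatA(1), \dots, \hatA(p), \hat N^n_t/\sqrt n)_{t \ge 0}$ has an explicit generator $L^n$ on $\mathbb{N}^p \times \{k/\sqrt n : k \ge 1\}$, obtained by multiplying the rates of \Cref{cor:sampleMarkov} by $1/\sqrt n$ after substituting $d$ by $nd$. Writing $x = k/\sqrt n$ and $\vec a = (a_1, \dots, a_p)$, I would split $L^n = L^n_{\mathrm{slow}} + L^n_{\mathrm{fast}}$: the slow part collects the four jumps that move some $a_i$, with per-particle birth rate $d/x$ and per-particle death rate $(x\sqrt n - 1)/(2\sqrt n) \to x/2$, both of order one; the fast part collects the two jumps leaving $\vec a$ unchanged, with rates of order $\sqrt n$, jump size $1/\sqrt n$, and net drift $F(x) = \beta_+(x) - \beta_-(x) = d - x^2/2$.

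The crucial step is to show that $\hat N^n_\cdot/\sqrt n$ converges, uniformly on compact time intervals and in probability, to the constant path $\sqrt{2d}$. A direct Taylor expansion gives $L^n_{\mathrm{fast}} \varphi(x) = F(x) \varphi'(x) + O(1/\sqrt n)$ for smooth $\varphi$, while the quadratic variation of the martingale part of $\hat N^n_\cdot/\sqrt n$ accumulates at rate $(\beta_+(x) + \beta_-(x))/\sqrt n = O(1/\sqrt n)$; Kurtz's functional law of large numbers from Chapter~11 of \cite{ethier_1986} then forces any weak subsequential limit of $\hat N^n_\cdot/\sqrt n$ to solve the ODE $\dot x = F(x)$, whose unique stable equilibrium on $(0,\infty)$ is $\sqrt{2d}$. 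It remains to identify the initial value. By stationarity, $\hat N^n_0$ is distributed as $M^n$; \Cref{lem:tightness} and its dominating process $X^n$ already give the one-sided bound $\limsup M^n/\sqrt n \le \sqrt{2d}$ in distribution, while the matching lower bound follows from the explicit stationary law $\pi^n(k) \propto (2dn)^{k-1}/(k!\,(k-1)!)$ of the underlying birth--death chain, whose mode lies at $k \asymp \sqrt{2dn}$ with spread $O(n^{1/4})$ by standard Bessel asymptotics. Hence $M^n/\sqrt n \to \sqrt{2d}$ in probability, so that $\hat N^n_\cdot/\sqrt n \Rightarrow \sqrt{2d}$ as a constant process.

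With the fast coordinate frozen at $\sqrt{2d}$, the slow generator converges: for $f$ depending only on $\vec a$,
\[
    L^n_{\mathrm{slow}} f(\vec a, x) \longrightarrow \sqrt{d/2} \sum_{i=1}^p a_i \Big[ f(\vec a + e_i) + f(\vec a - e_i) - 2 f(\vec a) \Big],
\]
uniformly for $x$ in a neighborhood of $\sqrt{2d}$. The right-hand side is the generator of $p$ i.i.d.\ critical binary birth--death processes with per-capita birth and death rate $\sqrt{d/2}$. Since $N^n_0 \to \infty$ in probability, the initial condition $\hatA(i) = 1$ for all $i \le p$ holds with probability tending to one, matching the start-from-one initial condition of the limit. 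Combined with tightness of the $\vec a$-component (immediate once $\hat N^n/\sqrt n$ is known to lie in a compact subset of $(0,\infty)$ on compact time intervals with high probability) and the generator-convergence / averaging theorem of Chapter~11 of \cite{ethier_1986}, one concludes the announced weak convergence in the Skorohod topology.

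The main obstacle is the fast-component step: pinning the stationary distribution of $M^n/\sqrt n$ at the atom $\delta_{\sqrt{2d}}$. \Cref{lem:tightness} only yields the upper bound, and the matching lower bound truly requires either the explicit Bessel-type analysis of the stationary law of the underlying birth--death chain, or a purely dynamical argument exploiting $F(x) > 0$ near $x = 0$ together with stationarity. Once this concentration is secured, the rest of the proof is a routine application of generator convergence and the averaging principle.
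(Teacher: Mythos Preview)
Your outline is sound and lands on the same fluid-limit backbone as the paper, but two points of execution differ and one of your claims is off.

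For the fast coordinate you work harder than necessary. The paper never identifies $\lim M^n/\sqrt n$ directly: it uses only tightness (\Cref{lem:tightness}) to extract a subsequential limit $N$, applies Theorem~2.1 of Chapter~11 of \cite{ethier_1986} along that subsequence (via Skorohod's representation, so the convergence is a.s.) to get that the limit process solves $\dot x = F(x)$ started from $N$, and then observes that since each $\hat N^{n_k}/\sqrt{n_k}$ is stationary the limit must be a stationary solution of the ODE, hence identically $\sqrt{2d}$. This bypasses your Bessel analysis of $\pi^n$ entirely. Note also that the dominating process in \Cref{lem:tightness} does \emph{not} give the sharp bound $\limsup M^n/\sqrt n \le \sqrt{2d}$ you claim: its stationary mean sits near $2\sqrt{2d}$, so that lemma genuinely delivers only tightness.

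For the slow coordinates the paper avoids an abstract averaging/generator-convergence theorem. It represents $(\hatA(1),\dots,\hatA(p),\hat N^n)$ as the unique strong solution of a Poisson-driven system (Theorem~4.1, Chapter~6 of \cite{ethier_1986}), with integrated rates of the form $\int_0^t (d\sqrt n/Y^n(s))X^n_i(s)\,\diff s$ and $\int_0^t ((Y^n(s)-1)/2\sqrt n)X^n_i(s)\,\diff s$. Once $Y^n/\sqrt n\to\sqrt{2d}$ a.s.\ uniformly on compacts along a subsequence, these integrands converge to $\sqrt{d/2}\,X_i(s)$ and the Poisson equations pass to the limit pathwise, yielding independent critical binary branching processes. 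Your generator route would also work, but the ``averaging theorem'' is not quite the right label---the fast variable degenerates to a constant rather than to an ergodic law---and making the passage rigorous would in effect reconstruct the Poisson-representation argument.
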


\begin{proof}
    We start by showing that the process
    $(\hat{N}^n_t/\sqrt{n};\, t \ge 0)$
    converges to the constant process with value $\sqrt{2d}$.
    The process
    $(\hat{N}^n_t)_{t \ge 0}$ is a Markov process jumping from
    \begin{itemize}
        \item $k$ to $k+1$ at rate $d\sqrt{n} = \sqrt{n}
        \beta_+(\frac{k}{\sqrt{n}})$.
        \item $k$ to $k-1$ at rate 
            $\frac{k(k-1)}{2\sqrt{n}} = \sqrt{n}
            \beta_-(\frac{k}{\sqrt{n}}) -
        \frac{1}{2\sqrt{n}}$.
    \end{itemize}
    Thus, the process $(\hat{N}^n_t)_{t \ge 0}$ is of the same form
    as the processes considered in Theorem~2.1 of Chapter~11
    of~\cite{ethier_1986}, except that the scaling is $\sqrt{n}$ and not
    $n$.

    Let us consider a stationary version of the process 
    $(\hat{N}^n_t)_{t \ge 0}$. \Cref{lem:tightness} shows that the
    sequence
    $(\hat{N}^n_0/\sqrt{n};\, n \ge 1)$ is tight. We can thus
    find an increasing sequence of indices $(n_k)_{k \ge 1}$ such that the
    subsequence $(\hat{N}^{n_k}_0/\sqrt{n_k};\, k \ge 1)$ converges in distribution
    to a limiting variables $N$. Using Skorohod's representation
    theorem \cite[see e.g.][]{billingsley_1999}, we can assume that the
    convergence holds a.s.

    Applying Theorem~2.1 of Chapter~11 of~\cite{ethier_1986} shows that 
    the sequence of processes $(\hat{N}^{n_k}_t/\sqrt{n_k};\, t \ge 0, k \ge 1)$ converges
    a.s.\ uniformly on compact sets to the solution of 
    \begin{align} \label{eq:diffEquation}
        \dot{x} = F(x) = d - \frac{x^2}{2},
    \end{align}
    started from the random variables $N$. (The original theorem
    is given for a different scaling, but the proof is easily adapted
    to ours.) As each process $(\hat{N}^{n_k}_t)_{t \ge 0}$ is 
    stationary, the limiting process is a stationary 
    solution to~\eqref{eq:diffEquation}, i.e., is the constant
    process with value $\sqrt{2d}$. This shows that each
    converging subsequence of $(\hat{N}^n_t / \sqrt{n};\, t \ge 0, n \ge 1)$
     converges to the same constant process, and thus that the
    entire sequence converges. 

    \medskip 

    Let us now prove the convergence of the ancestral processes. Consider
    independent Poisson processes $(P^-_i(t))_{t \ge 0}$, $(P^+_i(t))_{t
    \ge 0}$ for $i \le p$, and $(P^-_N(t))_{t \ge 0}$, 
    $(P^+_N(t))_{t \ge 0}$. Using e.g.\ Theorem~4.1 from Chapter~6
    of~\cite{ethier_1986}, there exists a unique strong solution to the
    following equation
    \begin{gather*}
        \forall t \ge 0, \forall i \le p,\;
        X^n_i(t) = P^+_i\Big(\int_0^t \frac{d \sqrt{n}X^n_i(s)}{Y^n(s)} \diff s\Big) 
        - P^-_i\Big(\int_0^t \frac{X^n_i(s)(Y^n(s)-1)}{2 \sqrt{n}} \diff s\Big),\\
        Y^n(t) = P^+_N\Big(\int_0^t d\sqrt{n} (1-\tfrac{\sum_i X_i^n(s)}{Y^n(s)}) \diff s \Big)
        - P^-_N\Big(\int_0^t \tfrac{Y^n(s)(Y^n(s)-1)}{2\sqrt{n}} (1-\tfrac{\sum_i X^n_i(s)}{Y^n(s)}) \diff s \Big)
        + \sum_{i=1}^p X^n_i(t).
    \end{gather*}
    Moreover, this solution $(X^n_1, \dots, X^n_p, Y^n)$ is distributed
    as $(\hatA(1), \dots, \hatA(p), \hat{N}^n_t;\, t \ge 0)$.

    As $Y^n/\sqrt{n}$ converges in probability to the constant process with value
    $\sqrt{2d}$, we can find a subsequence such that 
    \[
        \lim_{n \to \infty} \frac{d\sqrt{n}}{Y^n(t)} =
        \sqrt{\frac{d}{2}},\quad
        \lim_{n \to \infty} \frac{(Y^n(t)-1)}{2\sqrt{n}} = 
        \sqrt{\frac{d}{2}} \quad \text{a.s.}
    \]
    holds uniformly in $t$ on compact sets. This is sufficient to show
    that for each $i \le p$, 
    the subsequence of processes $(X^n_i(t))_{t \ge 0}$ converges a.s.\ in the 
    Skorohod space to the solution $(X_i(t))_{t \ge 0}$ of 
    \[
        \forall t \ge 0, \forall i \le p,\;
        X_i(t) = P^+_i\Big(\int_0^t \sqrt{\frac{d}{2}} X_i(s) \diff s\Big) 
        - P^-_i\Big(\int_0^t \sqrt{\frac{d}{2}} X_i(s) \diff s\Big).
    \]
    This proves that the entire sequence $(X^n_1, \dots,
    X^n_p)$ converges in probability in the Skorohod topology to the
    solution of the previous equation. Finally, noting that 
    the solutions of these equations are independent and distributed
    as critical binary branching processes with branching rate
    $\sqrt{d/2}$ ends the proof.
\end{proof}

We are now ready to prove \Cref{prop:blockSizeImmigration}.

\begin{proof}[Proof of \Cref{prop:blockSizeImmigration}]
    By construction, the size of $p$ blocks of $\bar{\Pi}^n$ chosen
    uniformly is given by the total number of particles of the processes 
    $(\hatA(1), \dots, \hatA(p);\, t \ge 0)$. Thus, in the limit, the 
    size of these blocks converges to the total size of $p$ independent
    critical binary branching processes.
\end{proof}

\section{Proof of \texorpdfstring{\Cref{prop:sizeErosion}}{Theorem~\ref{prop:sizeErosion}}}
\label{S:proofSizeErosion}

In the previous section we have derived the limiting distribution
of the sizes of blocks uniformly sampled from Kingman's coalescent with
immigration. In this section we make use of the coupling between
Kingman's coalescent with immigration and Kingman's coalescent
with erosion from \Cref{SS:coupling} to get the analogous result 
in the erosion case.

We first show the following result.

\begin{corollary}
    Let $\Pi^n$ have the stationary distribution of the
    $n$-Kingman coalescent with erosion. Let $(\abs{C^n_1}, \dots, \abs{C^n_p})$ be
    the size of $p$ blocks chosen uniformly from $\Pi^n$.
    Then
    \[
        (\abs{C^n_1}, \dots, \abs{C^n_p}) \Longrightarrow (J_1, \dots, J_p),
    \]
    where $(J_1, \dots, J_p)$ are i.i.d.\ variables distributed as
    the total progeny of a critical binary branching process.
\end{corollary}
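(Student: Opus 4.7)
The plan is to transfer \Cref{prop:blockSizeImmigration} from the immigration setting to the erosion setting via the coupling of \Cref{prop:erosionImmigration}. I will work on a stationary version of Kingman's coalescent with immigration rate $nd$, $(\bar{\Pi}_t)_{t \in \R}$, built from the marked Poisson process $P^n$ of \Cref{SS:couplingErosion}, and let $(\Pi^n_t)_{t \in \R}$ be the coupled $n$-Kingman coalescent with erosion rate $d$. The key structural observation is that every block of $\Pi^n_0$ is the $\phi_0$-preimage of a unique block of $\bar{\Pi}_0$ containing at least one ``last atom'' $\phi_0(k)$ with $k \in [n]$. Writing $\mathcal{B}^n$ for the collection of blocks of $\bar{\Pi}_0$ containing at least one last atom, the map $\bar{C} \mapsto \phi_0^{-1}(\bar{C})$ identifies $\mathcal{B}^n$ with the blocks of $\Pi^n_0$, and the size of $\phi_0^{-1}(\bar{C})$ equals the number of distinct last atoms that fall in $\bar{C}$.

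Through this identification, sampling $p$ blocks uniformly from $\Pi^n_0$ amounts to sampling $p$ blocks uniformly from $\mathcal{B}^n$, which in turn has the same law as sampling $p$ blocks uniformly from $\bar{\Pi}_0$ conditioned on all of them lying in $\mathcal{B}^n$. The proof therefore reduces to two asymptotic statements for $p$ fixed: \emph{(a)} for blocks $\bar{C}_1^n, \ldots, \bar{C}_p^n$ chosen uniformly from $\bar{\Pi}_0$, the probability that all of them lie in $\mathcal{B}^n$ tends to $1$; and \emph{(b)} the number of last atoms in $\bar{C}_i^n$ equals $\abs{\bar{C}_i^n}$ with probability tending to $1$, for each $i \le p$. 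Granted both, \Cref{prop:blockSizeImmigration} gives $(\abs{\bar{C}_1^n}, \dots, \abs{\bar{C}_p^n}) \Longrightarrow (J_1, \ldots, J_p)$, and hence the analogous convergence for the $\Pi^n_0$-blocks.

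Both (a) and (b) rest on the time-scaling of \Cref{prop:convergenceImmigration}: the ancestral process of any finite collection of blocks of $\bar{\Pi}_0$, when time is scaled by $1/\sqrt{n}$, converges to independent critical binary birth-death processes, so all atoms comprising these blocks lie in $[-O(1/\sqrt{n}), 0]$ and their counts are tight in $n$. For a block $\bar{C}$ of size $K$ with atoms in $[-t,0]$ where $t = O(1/\sqrt{n})$, thinning of $P^n$ at each mark gives a Poisson process of rate $d$, so an atom at time $-s \in [-t,0]$ has its mark reused by a strictly later atom of $P^n$ with probability $1 - e^{-ds} \le ds$. A union bound over the $K$ atoms of $\bar{C}$ then yields probability $O(Kt) = O(K/\sqrt{n}) \to 0$ that $\bar{C}$ contains a non-last atom, which proves (b) when $K$ is tight. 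Applied instead to the single most recent atom of each of the $p$ sampled blocks, the same bound proves (a).

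The main subtlety is that (a) concerns uniformly sampled blocks and not all blocks: the stronger claim $\mathcal{B}^n = \bar{\Pi}_0$ with probability tending to $1$ would require a union bound over $\Theta(\sqrt{n})$ blocks against a $O(1/\sqrt{n})$ per-block failure probability, and would in fact fail. Restricted to a uniform sample of fixed size $p$, the union bound has only $p$ terms and (a) closes. Likewise, the joint tightness of $(\abs{\bar{C}_1^n}, \ldots, \abs{\bar{C}_p^n})$ coming from \Cref{prop:blockSizeImmigration} ensures that the $O(K/\sqrt{n})$ bound in (b) is uniform across the $p$ sampled blocks, concluding the argument.
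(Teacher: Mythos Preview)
Your proposal is correct and follows the same route as the paper: couple $\Pi^n$ with $\bar\Pi^n_0$ via \Cref{prop:erosionImmigration}, sample $p$ blocks uniformly from $\bar\Pi^n_0$, and show that with high probability every atom of each sampled block is a last atom, so that the sampled blocks correspond bijectively to $p$ uniform blocks of $\Pi^n$ of the same sizes; then invoke \Cref{prop:blockSizeImmigration}. One small imprecision worth flagging: the probability $1-e^{-ds}$ that an atom at time $-s$ has its mark reused is the \emph{unconditional} probability, whereas your union bound is taken after conditioning on the block $\bar C$, which is a function of all atom times in $(-s,0]$; the paper handles this by also working on the event $\{P^n([-t/\sqrt n,0])\le (1+\epsilon)dt\sqrt n\}$, so that the number of later atoms is bounded deterministically and the label-collision probability (hence your $O(K/\sqrt n)$ bound) becomes rigorous.
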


\begin{proof}
    Recall the coupling between Kingman's coalescent with erosion
    and Kingman's coalescent with immigration. Let $(T_i)_{i \in \Z}$
    be the atoms of a Poisson point process $P^n$ with intensity $dn$,
    labeled in increasing order such that $T_0 < 0 < T_1$. Consider
    an independent i.i.d.\ sequence of marks $(\ell_i)_{i \in \Z}$
    that are uniformly distributed on $[n]$. 
    
    Let $\bar{\Pi}^n_0$ be the value at time $0$ of the version of
    Kingman's coalescent with erosion rate $nd$ built from $(T_i)_{i \in \Z}$ 
    as in \Cref{SS:lookdown}. We know from
    \Cref{prop:erosionImmigration} that we can obtain a
    version $\Pi^n$ of the stationary distribution of the $n$-Kingman
    coalescent with erosion by placing $i$ and $j$ in the same block
    of $\Pi^n$ if the last atoms of $P^n$ in $\OCInterval{-\infty, 0}$
    with mark $i$ and $j$ both belong to the same block of $\bar{\Pi}^n_0$.
    
    Now let $(\bar{C}_1^n, \dots, \bar{C}^n_p)$ be $p$ blocks chosen
    uniformly from $\bar{\Pi}_0$, and let $(\abs{\bar{C}^n_1}, \dots,
    \abs{\bar{C}^n_p})$ be their respective sizes. For $k \le p$, let 
    \[
        \abs{C^n_k} = \Card \Set{i \in \bar{C}^n_k \suchthat \text{$(T_i, \ell_i)$ is
        the last atom in $\OCInterval{-\infty, 0}$ with mark $\ell_i$}}.
    \]
    Then conditionally on $\Set{\abs{C^n_1} \ge 1, \dots, \abs{C^n_p} \ge 1}$,
    $(\abs{C^n_1}, \dots, \abs{C^n_p})$ are the sizes of $p$ blocks
    chosen uniformly from $\Pi^n$. The result is thus proved if we can
    show that
    \[
        \lim_{n \to \infty} \Prob{\abs{C^n_1} = \abs{\bar{C}^n_1}, \dots, 
        \abs{C^n_p} = \abs{\bar{C}^n_p}} = 1.
    \]

    Let us first explain intuitively why the previous claim holds. The 
    ancestors of $\bar{C}^n_1$ have all immigrated on a time-scale of
    order $1/\sqrt{n}$. On this time-scale, there are of order $\sqrt{n}$
    particles that have also immigrated. All these particles receive a
    uniform label in $[n]$. Thus the probability that an ancestor of 
    $\bar{C}^n_1$ has received the same label as one of the other
    $\sqrt{n}$ particles, i.e., that it is not the first atom with its
    mark, is of order $1/\sqrt{n}$. Let us make this argument rigorous.

    Set 
    \[
        \tau^n_1 \defas \min \Set{T_i \suchthat i \in \bar{C}^n_1}
    \]
    to be the total life-time of the ancestors of the block $\bar{C}^n_1$.
    (The variable $\tau^n_1$ gives the immigration time of the first
    particle that forms the block $\bar{C}^n_1$.)
    The total number of particles that have immigrated during the time
    interval $[\tau^n_1, 0]$ is then $P^n([\tau^n_1, 0])$. Consider the 
    event 
    \[
        E_k = \Set{\abs{\bar{C}^n_1}=k,\; \tau^n_1 \in [-\tfrac{t}{\sqrt{n}}, 0],\;
        P^n([-\tfrac{t}{\sqrt{n}}, 0]) \le (1+\epsilon)dt\sqrt{n}}.
    \]
    On this event, if $\abs{C^n_1} \ne \abs{\bar{C}^n_1}$, then one the
    $k$ ancestors of $\bar{C}^n_1$ has received the same label as one of the particle
    that has immigrated in the time interval $[\tau^n_1, 0]$, that is,
    the same label as one of the $(1+\epsilon)dt\sqrt{n}$ last atoms
    of $P^n$. As the labels are chosen uniformly, the probability that 
    the $k$ ancestors all have labels distinct from the labels of the
    $(1+\epsilon)dt\sqrt{n}$ last particles is 
    \[
        \Big(1-\frac{1}{n}\Big)\dots\Big(1-\frac{k-1}{n}\Big)
        \Big(1-\frac{k}{n}\Big)^{(1+\epsilon)dt\sqrt{n}-k}
    \]
    which goes to $1$ as $n$ goes to infinity for all fixed $k$.
    Thus
    \[
        \Prob{\abs{C^n_1} \ne \abs{\bar{C}^n_1}, E_k} \le 
        \Big(1-\frac{1}{n}\Big)\dots\Big(1-\frac{k-1}{n}\Big)
        \Big(1-\frac{k}{n}\Big)^{(1+\epsilon)dt\sqrt{n}-k},
    \]
    and 
    \begin{align*}
        \Prob{\abs{C^n_1} \ne \abs{\bar{C}^n_1}} \le \;
        &\Prob{\tau^n_1 \not\in [-\tfrac{t}{\sqrt{n}}, 0]} + \Prob{\abs{\bar{C}^n_1} \ge K}\\
        &+ \Prob{P^n([-\tfrac{t}{\sqrt{n}}, 0]) > (1+\epsilon)dt\sqrt{n}} + o_n(1).
    \end{align*}
    Now, by \Cref{prop:blockSizeImmigration}, the sequence
    $(-\sqrt{n}\tau^n_1)_{n \ge 1}$ converges in
    distribution to the total life-time of a binary critical branching
    process, and $(\abs{\bar{C}^n_1})_{n \ge 1}$ converges to the total progeny
    of this process. Thus, the first two terms in the above equation can be
    made as small as desired uniformly in $n$ by taking $t$ and $K$ large
    enough. Using Chebishev's inequality, the last term can also be made small
    by choosing a large enough $\epsilon$. This proves the result for $p = 1$
    and a simple union bound proves the result for any $p$.
\end{proof} 

\begin{remark}
    In the previous proof, on the event $\Set{\abs{\bar{C}^n_1} = \abs{C^n_1}}$,
    not only the size of the blocks of Kingman's coalescents
    with erosion and immigration coincide, but also the 
    genealogy of the blocks. Thus we have shown the slightly
    stronger result that, in the $n$-Kingman coalescent with erosion,
    the genealogy of a block chosen uniformly from the stationary
    distribution converges to that of a critical binary branching process.
\end{remark}

We can now prove \Cref{prop:sizeErosion}. Recall that
$\mu^n_k$ denotes the frequency of blocks of size $k$ of $\Pi^n$,
i.e., if the blocks of $\Pi^n$ are $(C^n_1, \dots, C^n_{M^n})$,
then
\[
    \mu^n_k = \frac{1}{M^n} \Card(\Set{i \suchthat \abs{C^n_i} = k}).
\]

\begin{proof}[Proof of \Cref{prop:sizeErosion}]
    (i) We start by proving that 
    $M^n/\sqrt{n}$ converges to $\sqrt{2d}$ in probability.
    Let us consider a version $\bar\Pi^n$ of the stationary distribution
    of Kingman's coalescent with immigration rate $nd$, coupled with a
    version $\Pi^n$ of the stationary distribution of Kingman's
    coalescent with erosion rate $d$ on $[n]$. Let $\bar{M}^n$, resp.\
    $M^n$, denote the number of blocks of $\bar{\Pi}^n$, resp.\ $\Pi^n$.
    Recall that the blocks of $\Pi^n$ are subsets of the blocks of
    $\bar{\Pi}^n$, where a particle is retained if there are no other
    particles with the same label that have immigrated after it. Let
    $\abs{\bar{C}^n}$ be the size of a block of $\bar{\Pi}^n$ chosen
    uniformly, and let $\abs{C^n}$ be the size of the corresponding block
    of $\Pi^n$. Some blocks of $\bar{\Pi}^n$ are only composed of 
    particles that are not retained to form $\Pi^n$. Such blocks have
    no corresponding blocks in $\Pi^n$, and $\bar{M}^n - M^n$ is exactly
    the number of such blocks. Thus
    \[
        \E \big[ \frac{\bar{M}^n - M^n}{\bar{M}^n} \big]
        = \Prob{\abs{C^n} = 0} \longrightarrow 0.
    \]
    This shows that $M^n / \bar{M}^n$ goes to $1$ in probability.
    \Cref{lem:tightness} further shows that $\bar{M}^n / \sqrt{n}$ goes
    to $\sqrt{2d}$ in probability, and thus that $M^n / \sqrt{n}$
    also goes to $\sqrt{2d}$ in probability.

    \medskip 

    (ii) We prove the second point using the method of moments.
    Let $(\abs{C^n_1}, \dots, \abs{C^n_p})$ be the sizes of $k$ uniformly
    sampled blocks of $\Pi^n$. Then, as the number of blocks
    $M^n$ goes to infinity, we have that 
    \[
        \lim_{n \to \infty} \E[(\mu^n_k)^p] = \lim_{n \to \infty}
        \Prob{\abs{C^n_1} = \dots = \abs{C^n_p} = k} = \Prob{J = k}^p,
    \]
    where $J$ is the total progeny of a binary critical branching
    process. The convergence of the moments readily implies convergence
    in distribution as the limit is a Dirac mass.
\end{proof}

\section{Asymptotic frequencies of Kingman's coalescent with erosion}
\label{S:eves}

In this section we prove \Cref{Thm:frequencies}, which gives 
a representation of the asymptotic frequencies in terms of hierarchically
independent diffusions. First, we use the
flow of bridges construction of Kingman's coalescent with erosion from
\Cref{cor:erosionLookdown} to give a correspondence between
the frequencies of the blocks and the size of the families of a Fleming-Viot
process.

    \subsection{Eves of a Fleming-Viot process}

Let $(\rho_t)_{t \ge 0}$ be a Fleming-Viot process. For each 
individual $x \in [0, 1]$, denote
\[
    \zeta(x) = \inf \Set{t \ge 0 \suchthat \rho_t(\Set{x}) = 0}
\]
the extinction time of the offspring of $x$. It is clear 
that the set
\[
    \Set{x \in [0, 1] \suchthat \zeta(x) > 0}
    = \Set{x \in [0, 1] \suchthat \text{$\rho_t(\Set{x}) > 0$ for some $t
    \ge 0$}}
\]
is countable. The elements of this set can actually be enumerated in 
decreasing order of their extinction time, that is, they
can be written $(\e_i)_{i \ge 0}$ with
\[
    \zeta(\e_1) > \zeta(\e_2) > \dots
\]
This fact can be found e.g.\ in~\cite{labbe_flows_2014}, Theorem~1.6.
The sequence $(\e_i)_{i \ge 0}$ is called the sequence of \emph{Eves}
of $(\rho_t)_{t \ge 0}$, and was introduced
in~\cite{bertoin_stochastic_2003} and~\cite{labbe_flows_2014},
see also~\cite{duquesne_eve_2014} for a similar notion for
Continuous-State Branching Processes. The following result
shows that the frequencies of the blocks of the stationary distribution
of Kingman's coalescent with erosion can be recovered from the 
size of the offspring of the Eves.

\begin{lemma}
    Let $(\e_i)_{i \ge 1}$ be the Eves of a Fleming-Viot process
    $(\rho_t)_{t \ge 0}$. Then the non-increasing reordering
    of the sequence $(z_i)_{i \ge 1}$ defined as 
    \[
        \forall i \ge 1,\; z_i = \int_0^\infty de^{-dt}
        \rho_t(\Set{\e_i}) \diff t
    \]
    is distributed as the frequencies of the blocks of the
    stationary distribution of Kingman's coalescent with erosion
    rate $d$.
\end{lemma}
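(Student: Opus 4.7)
The plan is to leverage the flow-of-bridges construction of $\Pi$ provided by \Cref{cor:erosionLookdown}. Fix a standard flow of bridges $(B_{s,t})_{s \le t}$ and independent sequences $(T_i)_{i \ge 1}$, $(U_i)_{i \ge 1}$ of i.i.d.\ exponential variables with parameter $d$ and uniform $[0,1]$ variables, and realize $\Pi$ through
\[
    i \sim_\Pi j \iff B^{-1}_{-T_i,0}(U_i) = B^{-1}_{-T_j,0}(U_j).
\]
The associated Fleming-Viot process $(\rho_t)_{t \ge 0}$ is defined so that $B_{-t,0}$ is the distribution function of $\rho_t$, and its Eves $(\e_k)_{k \ge 1}$ enumerate exactly the points of $[0,1]$ whose progeny has positive lifetime. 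The key observation is that, conditional on the flow, the pairs $(T_i, U_i)$ remain i.i.d., so the blocks of $\Pi$ can be analyzed through a paintbox-type computation driven by the subpopulations of the Eves.

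The first step is to show that, almost surely for every $i$, the point $X_i := B^{-1}_{-T_i, 0}(U_i)$ coincides with some Eve. This will follow from the fact that $B_{-t, 0}$ is purely atomic at every fixed $t > 0$, as is apparent from the explicit description recalled in \Cref{SS:bridges}, where the bridge has only $N_t < \infty$ jumps. Hence $X_i$ a.s.\ falls on an atom of $\rho_{T_i}$, which by definition is an Eve $\e_k$. Setting $C_k := \Set{ i \ge 1 \suchthat X_i = \e_k }$, the blocks of $\Pi$ therefore coincide with the non-empty $C_k$'s.

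The second step is to compute the conditional block-membership probability. Since $B^{-1}_{-t, 0}(U_i) = \e_k$ if{f} $U_i$ falls in an interval of length $\rho_t(\Set{\e_k})$, conditioning first on $T_i$ and then integrating over its exponential distribution yields
\[
    \Prob{X_i = \e_k \mid \rho} = \int_0^\infty d e^{-dt} \rho_t(\Set{\e_k}) \diff t = z_k.
\]
Conditional on $\rho$, the indicators $(\Indic{X_i = \e_k})_{i \ge 1}$ are i.i.d.\ Bernoulli with parameter $z_k$. The conditional strong law of large numbers, applied simultaneously to the countably many $k$'s, gives
\[
    \lim_{n \to \infty} \frac{1}{n} \Card(C_k \cap [n]) = z_k \quad \text{a.s.}
\]
This identifies $(z_k)_{k \ge 1}$ as the (unordered) asymptotic frequencies of the blocks of $\Pi$; their non-increasing reordering is then precisely the sequence described in the statement.

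The main delicate point is the first step, namely that $i \mapsto X_i$ genuinely lands in the Eve enumeration and that no asymptotic mass is lost when reordering. Pure atomicity of the bridges at positive times handles the former; the latter is guaranteed by the Berestycki theorem cited above, which ensures $\sum_k z_k = 1$ a.s.\ so that the family $(C_k)_{k \ge 1}$ exhausts $\N$. Once these are in place, the remainder is a routine conditional paintbox argument.
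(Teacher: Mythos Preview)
Your proposal is correct and follows essentially the same route as the paper: both start from the flow-of-bridges construction of \Cref{cor:erosionLookdown}, identify each $B^{-1}_{-T_i,0}(U_i)$ with an Eve, and recover the block frequencies via a law of large numbers. The only cosmetic differences are that the paper invokes exchangeability of the sequence $(B^{-1}_{-T_i,0}(U_i))_{i\ge1}$ rather than conditioning on $\rho$, and it checks the converse inclusion (every Eve is the ancestor of some block) directly rather than via $\sum_k z_k = 1$; note that the latter inclusion also follows immediately from your SLLN step since each $z_k>0$, so the appeal to Berestycki's theorem is not actually needed.
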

\begin{proof}
    Consider a flow of bridges $(B_{s,t})_{s \le t}$, and let $(T_i)_{i \ge 1}$,
    $(U_i)_{i \ge 1}$ be two independent i.i.d.\ sequences of exponential
    variables with parameter $d$, and uniform variables respectively.
    Again, let $\Pi$ be the partition of $\N$ defined as
    \[
        i \sim_{\Pi} j \iff B^{-1}_{-T_i, 0}(U_i) = B^{-1}_{-T_j, 0}(U_j),
    \]
    which has the stationary distribution of Kingman's coalescent with
    erosion. We denote $\Pi = (C_1, C_2, \dots)$ the blocks of $\Pi$,
    ordered in increasing order of their least elements, i.e., such that
    \[
        i \le j \iff \min(C_i) \le \min(C_j).
    \]
    Then let us call 
    \[
        A_i = B^{-1}_{-T_j, 0}(U_j),\; \forall j \in C_i,
    \]
    the ancestor of the block $C_i$. 

    As the flow of bridges $(B_{s,t})_{s \le t}$ is independent
    of the sequences $(U_i)_{i \ge 1}$ and $(T_i)_{i \ge 1}$,
    the sequence $(B^{-1}_{-T_i, 0}(U_i))_{i \ge 1}$ is exchangeable.
    Thus, the law of large numbers shows that for any $i \ge 1$, 
    \[
        \frac{1}{n} \Card(C_i \cap [n]) = \frac{1}{n} \sum_{j = 1}^n
        \Indic{B^{-1}_{-T_j, 0}(U_j) = A_i} 
        \underset{n \to \infty}{\longrightarrow} \int_0^\infty d e^{-dt}
        \rho_t(\Set{A_i}) \diff t \quad \text{a.s.}
    \]
    Thus the result is proved if we can show that a.s.
    \[
        \Set{\e_i \suchthat i \ge 1} = \Set{A_i \suchthat i \ge 1}.
    \]
    Clearly we have $\zeta(A_i) > 0$, as otherwise the frequency of the
    block $C_i$ would be zero. Moreover, conditionally on the flow of bridges,
    there exists a.s.\ some $j \ge 1$ such that 
    \[
        (U_j, T_j) \in \Set{(x,t) \suchthat B^{-1}_{-t, 0}(x) = \e_i}
    \]
    as by definition of $\e_i$ this set has positive Lebesgue measure. Thus, 
    a.s.\ $\e_i$ is the ancestor of some block of $\Pi$, and the result
    is proved.
\end{proof}

In order to prove \Cref{Thm:frequencies}, it remains to show that
the sequence of processes $\big(\rho_t(\Set{\e_1}),\allowbreak
\rho_t(\Set{\e_2}), \dots ;\, t \ge 0\big)$ has the same distribution as
the sequence of hierarchically independent diffusions introduced in
\Cref{SS:mainResults}. In the following section we characterize
this distribution, and complete the proof in the last section.

    \subsection{Wright-Fisher diffusion conditioned on its extinction
    order}
    \label{SS:WFconditioned}

Consider a $n$-dimensional Wright-Fisher diffusion $(X_1, \dots, X_n)$.
That is, $(X_1, \dots, X_n)$ is distributed as the unique solution to
\[
    \forall i \ge 1,\; \diff X_i = \sum_{\substack{j = 1\\j \ne i}}^n \sqrt{X_i X_j} \diff
    W_{i,j},
\]
where $(W_{i,j})_{i < j}$ are independent Brownian motions, and $W_{j,i}
= -W_{i,j}$, and started from an initial condition $(x_1, \dots, x_n) \in
\OpenInterval{0, 1}^n$ verifying $x_1 + \dots + x_n = 1$.
The Wright-Fisher diffusion describes the dynamics of a population
with constant size, where individuals can be of $n$ different types; 
$X_i$ denotes the frequency of type $i$ individuals in the population.
Each process $X_i$ is eventually absorbed at $0$ or $1$. We say that
the family $X_i$ reaches fixation if it gets absorbed at $1$, and that it 
becomes extinct otherwise. Let 
\[
    \zeta_i = \inf \Set{t \ge 0 \suchthat X_i = 0}
\]
denote its absorption time at $0$. 

In this section, we study the distribution of $(X_1, \dots, X_n)$
conditionally on the event $\Set{\zeta_n < \dots < \zeta_1}$.
First, notice that as $X_1 + \dots + X_n = 1$, there is exactly
one family that reaches fixation. Thus, on the event $\Set{\zeta_n <
\dots < \zeta_1}$, we have $\zeta_1 = \infty$ and $X_1$ reaches fixation;
$X_2$ is the last family to go extinct, and $X_n$ is the first
family to go extinct.
We now express the distribution of the conditioned Wright-Fisher
diffusion in terms of the diffusions introduced in
\Cref{SS:mainResults}.

\bigskip 

We will work inductively, by first conditioning the process $(X_1, \dots,
X_n)$ on $\zeta_1$ being the largest extinction time, then on $\zeta_2$
being the second largest and so on and so forth. The key point is that after conditioning
on the fixation of $X_1$, the remainder of the population, $(X_2, \dots,
X_n)$, is distributed as a rescaled, time-changed, unconditioned
$(n-1)$-dimensional Wright-Fisher diffusion, independent of $X_1$.

Let us be more specific and let $Y_1$ be the solution of
\begin{align}
    \diff Y_1 = (1-Y_1) \diff t + \sqrt{Y_1(1-Y_1)} \diff W_1, \label{eq:hTransform}
\end{align}
for some Brownian motion $W_1$. Notice that $Y_1$ is distributed
as a usual $1$-dimensional Wright-Fisher diffusion, conditioned on
fixation. Consider the fixation time of $Y_1$ which is defined as
\[
    S_1 = \inf \Set{t \ge 0 \suchthat Y_1(t) = 1}.
\]
We further define a random time-change $\tau_1$ as 
\[
    \forall t < S_1,\; \tau_1(t) = \int_0^t
    \frac{1}{1-Y_1(s)} \diff s, \quad \forall t \ge S_1,\;
    \tau_1(t) = \infty.
\]
We start by proving the following result.

\begin{lemma} \label{lem:firstConditioning}
    Let $Y_1$ and $\tau_1$ be as above and consider an independent
    $(n-1)$-dimensional Wright-Fisher diffusion $(X_2, \dots, X_n)$. Then,
    the process $(Z_1, \dots, Z_n)$ defined as  
    \begin{gather*}
        Z_1 = Y_1, \quad \forall i > 1, \forall t \ge 0,\; Z_i(t) = (1-Z_1(t))
        X_i(\tau_1(t)),
    \end{gather*}
    is distributed as a $n$-dimensional Wright-Fisher diffusion
    conditioned on $\Set{\zeta_1 = \infty}$.
\end{lemma}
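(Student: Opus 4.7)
My plan is to identify the law of the conditioned Wright-Fisher diffusion via Doob's $h$-transform, and then to verify by a change-of-variable and time-change computation that the explicit construction realises the same infinitesimal generator.

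First, each coordinate $X_i$ of the $n$-dimensional Wright-Fisher diffusion is a bounded martingale absorbed on $\{0,1\}$, and $X_1 + \cdots + X_n = 1$, so exactly one coordinate reaches fixation. Hence $\{\zeta_1 = \infty\} = \{X_1 \to 1\}$ and $\mathbb{P}_x[\zeta_1 = \infty] = x_1$: conditioning on $\{\zeta_1 = \infty\}$ is Doob's $h$-transform with harmonic function $h(x) = x_1$. Writing the generator of the unconditioned diffusion as $L = \tfrac{1}{2}\sum_{i,j} a_{ij}(x)\,\partial_i\partial_j$ with $a_{ij}(x) = x_i(\delta_{ij}-x_j)$, and using $Lh = 0$, the standard Doob formula gives
\[
L^h f = Lf + \frac{1}{x_1}\sum_j a_{1j}(x)\,\partial_j f = Lf + (1-x_1)\partial_1 f - \sum_{i\ge 2} x_i\,\partial_i f .
\]
In particular, the first coordinate under the conditioning solves $\diff Z_1 = (1-Z_1)\diff t + \sqrt{Z_1(1-Z_1)}\diff W$, which is exactly the SDE~\eqref{eq:hTransform} defining $Y_1$.

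Second, I will show that the process $(Z_1,\dots,Z_n)$ of the statement has generator $L^h$. Since $Y_1$ and $(X_2,\dots,X_n)$ are independent and $\tau_1$ is adapted to $Y_1$, the process $\bigl(Y_1(t), X_2(\tau_1(t)),\dots,X_n(\tau_1(t))\bigr)$ is Markov with generator
\[
\mathcal{L}\phi(y,x_2,\dots,x_n) = (1-y)\partial_y\phi + \tfrac{1}{2}y(1-y)\partial_y^2\phi + \frac{1}{1-y}\,L^{(n-1)}_x\phi ,
\]
where $L^{(n-1)}_x$ is the $(n-1)$-dimensional Wright-Fisher generator in the $x$-variables and the factor $1/(1-y)$ comes from $\tau_1'(t) = 1/(1-Y_1(t))$. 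Plugging in a test function of the form $\phi(y,x_2,\dots,x_n) = f(y,(1-y)x_2,\dots,(1-y)x_n)$, the chain rule applied to $z_1 = y$ and $z_i = (1-y)x_i$ rewrites every derivative of $\phi$ in terms of derivatives of $f$ at $z$. Collecting the four resulting families of terms---drift, $\partial_1^2$, cross $\partial_1\partial_i$ for $i\ge 2$, and $\partial_i\partial_j$ for $i,j\ge 2$---and simplifying shows that $\mathcal{L}\phi$ coincides with $L^h f$. The decisive cancellation is in the $\partial_i\partial_j$ block: the contribution $\tfrac{y}{1-y}\sum_{i,j\ge 2} z_iz_j\,\partial_i\partial_j f$ produced by the double $y$-derivative of $\phi$ combines with $-\tfrac{1}{1-y}\sum_{i,j\ge 2} z_iz_j\,\partial_i\partial_j f$ coming from the quadratic part of $L^{(n-1)}_x$ to leave exactly the $-\sum_{i,j\ge 2} z_iz_j\,\partial_i\partial_j f$ required by $L^h$.

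Well-posedness of the martingale problem for the Wright-Fisher generator on the simplex with its natural boundary behaviour then yields that $(Z_1,\dots,Z_n)$ has the law of the $h$-transform, i.e.\ of the $n$-dimensional Wright-Fisher diffusion conditioned on $\{\zeta_1 = \infty\}$. I expect the main obstacle to be the generator computation above: the combined nonlinear change of variables $z_i = (1-y_1)x_i$ and time-change $\tau_1$ produce many competing terms, and one must also check the behaviour at $S_1 = \inf\{t : Y_1(t) = 1\}$, where $\tau_1(t) \uparrow \infty$, the diffusion $(X_2,\dots,X_n)$ reaches one of its absorption states, and $(1-Y_1(t)) \to 0$ drives all $Z_i$ with $i \ge 2$ to $0$, consistently with the conditioning.
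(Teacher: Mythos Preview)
Your strategy is the same as the paper's: identify the conditioned law via Doob's $h$-transform with $h(x)=x_1$, then verify that the explicit construction $(Y_1,(1-Y_1)X_i\circ\tau_1)$ realises it. The only difference is presentational: you work with generators and the martingale problem, while the paper works with the SDE system, performs the stochastic time-change at the level of It\^o integrals, and invokes Dubins--Schwarz to recover the driving Brownian motions. Your generator computation (including the cancellation you highlight in the $\partial_i\partial_j$ block) is correct and matches the paper's It\^o calculation.

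One technical point you flag but do not settle: the paper proves, via a separate lemma on $\int_0^{T_0} W_s^{\alpha}\,\diff s$ for Brownian motion, that $\tau_1(t)\to\infty$ continuously as $t\uparrow S_1$. This is needed so that the time-changed process $X_i(\tau_1(t))$ is well-defined and continuous at $S_1$; without it, $X$ could fail to be absorbed by time $\tau_1(S_1-)$ and $(Y_1,X\circ\tau_1)$ would have a jump, breaking both your claim that this process is Markov with generator $\mathcal{L}$ and the identification with the Feller diffusion on the simplex. Your final paragraph correctly identifies this as the main obstacle, but to complete the argument you would need to supply that estimate.
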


\begin{remark}
    The time $\tau_1(t)$ is infinite with positive probability. However,
    each of the processes $(X_2, \dots, X_n)$ has an a.s.\ limit
    as $t$ goes to infinity. On the event $\Set{\tau_1(t) = \infty}$,
    we take $X_i(\tau_1(t))$ to be this limit, so that the process
    $(Z_1, \dots, Z_n)$ is now well-defined.
\end{remark}

Before proving \Cref{lem:firstConditioning}, we need the following 
fact that we prove for the sake of completeness.

\begin{lemma} \label{lem:brownianIntegral}
    Let $(W_t)_{t \ge 0}$ be a Brownian motion on $\R$ started at $1$,
    and let $T_0$ be the first time it hits $0$. Then for $\alpha \in \R$,
    a.s.
    \[
        \int_0^{T_0} W_s^\alpha \diff s = 
        \begin{cases}
            \infty &\text{ if $\alpha \le -2$}\\
            y_\alpha < \infty &\text{ if $\alpha > -2$.}
        \end{cases}
    \]
\end{lemma}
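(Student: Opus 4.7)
My approach is to reduce everything to the occupation time formula for Brownian local time. Writing $L^a_{T_0}$ for the local time of $W$ at level $a$ accumulated up to $T_0$, and noting that $W_s > 0$ on $[0, T_0)$ by continuity, I rewrite
\[
    \int_0^{T_0} W_s^\alpha \diff s = \int_0^\infty a^\alpha L^a_{T_0} \diff a,
\]
and split at $a = 1$. The contribution from $[1, \infty)$ is almost surely finite for every $\alpha \in \mathbb{R}$: the maximum $M := \sup_{s \le T_0} W_s$ is a.s.\ finite, $L^a_{T_0}$ vanishes for $a > M$, and $a \mapsto a^\alpha L^a_{T_0}$ is continuous on the bounded random interval $[1, M]$. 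So only the behaviour of $L^a_{T_0}$ near $a = 0$ matters.

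For the regime $\alpha > -2$, a first-moment argument is enough. The Green function of Brownian motion on $[0, \infty)$ killed at $0$ is $G(x, y) = 2 \min(x, y)$, so $\mathbb{E}[L^a_{T_0}] = 2a$ for $a \in [0, 1]$, and
\[
    \mathbb{E}\Big[\int_0^1 a^\alpha L^a_{T_0} \diff a\Big] = \int_0^1 2 a^{\alpha+1} \diff a = \frac{2}{\alpha+2} < \infty,
\]
so the near-zero piece is a.s.\ finite.

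The delicate case is $\alpha \le -2$, where I must prove almost-sure divergence (the expected integral is infinite, but this does not a priori preclude almost-sure finiteness). I would invoke the first Ray--Knight theorem: setting $B_t := 1 - W_t$ gives a Brownian motion from $0$ with $T_1(B) = T_0(W)$ and $L^a_{T_0}(W) = L^{1-a}_{T_1}(B)$, so as a process in $a \in [0, 1]$ it is a squared Bessel process of dimension $2$ started at $0$. Representing this process as $R_a = X_a^2 + Y_a^2$ with $X, Y$ independent standard Brownian motions from $0$, and using $a^\alpha \ge a^{-2}$ on $(0, 1]$, it suffices to show $\int_0^1 a^{-2} X_a^2 \diff a = \infty$ a.s. Brownian time inversion $X^*_u := u X_{1/u}$ is again a BM from $0$, and the substitution $a = 1/u$ converts the integral into $\int_1^\infty u^{-2} (X^*_u)^2 \diff u$. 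Finally, Lamperti's transformation $V_t := e^{-t/2} X^*_{e^t}$ produces a stationary ergodic Ornstein--Uhlenbeck process with $V_0 = X^*_1 \sim \mathcal{N}(0, 1)$, and the substitution $u = e^t$ yields
\[
    \int_1^\infty \frac{(X^*_u)^2}{u^2} \diff u = \int_0^\infty V_t^2 \diff t.
\]
Birkhoff's ergodic theorem gives $\frac{1}{T} \int_0^T V_t^2 \diff t \to \mathbb{E}[V_0^2] = 1$ a.s., so the integral diverges a.s. The main subtlety is precisely this step: every finiteness claim follows from a moment bound, but the divergence side genuinely requires escaping expectation arguments and exploiting the stationary structure hidden in Brownian scaling.
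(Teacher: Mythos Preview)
Your proof is correct, but it takes a genuinely different route from the paper's. The paper invokes the Lamperti representation of positive self-similar Markov processes to write $W_{t\wedge T_0} \overset{(d)}{=} \exp(\xi_{\tau(t)})$ with $\xi_s=\tilde W_s-\tfrac{s}{2}$ a Brownian motion with drift $-\tfrac12$; after the time change this yields
\[
    \int_0^{T_0} W_s^\alpha\,\diff s \overset{(d)}{=} \int_0^\infty e^{(2+\alpha)\xi_s}\,\diff s,
\]
and the dichotomy drops out in one stroke from the sign of the drift of $(2+\alpha)\xi$. Your argument instead passes through the occupation time formula, a Green function moment bound for the finite case, and first Ray--Knight followed by time inversion and Ornstein--Uhlenbeck ergodicity for the divergent case. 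Both are sound; the paper's version treats the two regimes symmetrically and needs only one piece of machinery, whereas yours separates the two regimes and stacks several classical tools, but has the merit of making the near-zero behaviour of local time completely explicit. It is worth noting that Lamperti appears in both arguments, in different guises: for the paper it is the self-similar/L\'evy correspondence, for you it is the Brownian/OU correspondence $V_t=e^{-t/2}X^*_{e^t}$.
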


\begin{proof}
    Let us define 
    \[
        \forall t \ge 0,\; \xi_t = \tilde{W}_t - \frac{t}{2}, \quad 
        \tau(t) = \inf \Big\{s \ge 0 : \int_0^s \exp(2 \xi_u) \diff u > t\Big\},
    \]
    for a Brownian motion $(\tilde{W}_t)_{t \ge 0}$ with the convention that
    $\inf \emptyset = \infty$ and $\xi_\infty = -\infty$. The Lamperti 
    representation of positive self-similar processes~\citep{lamperti_semistable_1972} shows that $W_t$ stopped at
    $T_0$ satisfies the equality in distribution
    \[
        (W_{t \wedge T_0})_{t \ge 0}
        \overset{\mathrm{(d)}}{=} (\exp(\xi_{\tau(t)}))_{t \ge 0}.
    \]
    Thus
    \[
        \int_0^{t \wedge T_0} W_s^\alpha \diff s \overset{\mathrm{(d)}}{=}
        \int_0^t \exp(\alpha \xi_{\tau(s)}) \diff s
        = \int_0^{\tau(t)} \exp((2+\alpha) \xi_{s}) \diff s,
    \]
    and 
    \[
        \int_0^{T_0} W_s^\alpha \diff s \overset{\mathrm{(d)}}{=}
        \int_0^\infty \exp((2+\alpha) \xi_{s}) \diff s,
    \]
    which yields the result.
\end{proof}

\begin{proof}[Proof of \Cref{lem:firstConditioning}]
    Consider a $n$-dimensional Wright-Fisher diffusion $(X_1, \dots,
    X_n)$. A calculation of 
    Doob's $h$-transform using the harmonic function
    \[
        h(x_1, \dots, x_n) = \Prob{\lim_{t \to \infty}
        X_1(t) = 1 \given X_1(0) = x_1, \dots, X_n(0) = x_n} = x_1
    \]
    shows that the process $(X_1, \dots, X_n)$ conditioned on
    $\Set{\lim_{t \to \infty} X_1(t) = 1} = \Set{\zeta_1 = \infty}$ is
    distributed as the unique solution to the equation
    \begin{gather*}
        \diff X_1 = (1-X_1) \diff t + \sum_{\substack{j=2}}^n \sqrt{X_1 X_j} \diff W_{1,j},\\
        \forall i \ge 2,\; \diff X_i = -X_i \diff t + \sum_{\substack{j =1\\j \ne i}}^n
        \sqrt{X_i X_j} \diff W_{i,j},
    \end{gather*}
    where $(W_{i,j})_{i < j}$ are independent Brownian motions, and
    $W_{i,j} = -W_{j,i}$. We will prove that the process $(Z_1, \dots,
    Z_n)$ solves this equation.

    \medskip

    Now consider a $(n-1)$-dimensional Wright-Fisher diffusion $(X'_2, \dots, X'_n)$
    independent of $Y_1$ which solves
    \begin{gather*}
        \forall i \ge 2,\; \diff X'_i = \sum_{\substack{j = 2\\j \ne
        i}}^n \sqrt{X'_i X'_j} \diff W'_{i,j}.
    \end{gather*}
    We start by giving the equation solved by the process $(Y_1, X'_2 \circ
    \tau_1, \dots, X'_n \circ \tau_1)$. Notice that here, only a 
    subset of the processes are time-changed, and that $\tau_1$
    explodes in finite time. For these two reasons, let us realize
    the time-change carefully.
    
    We transform $\tau_1$ into a family of finite stopping times.
    Our first task is to prove that $\tau_1$ goes continuously to
    infinity, we do this using the speed and scale measures of the diffusion $Y_1$, see
    e.g.~\cite{etheridge_2011}. If we define $D = 1 / Y_1$, then
    \[
        \diff D = \sqrt{D-1} D \diff W_1, \quad 
        \forall t \ge 0,\; [D, D]_t = \int_0^t (D(s)-1) D(s)^2 \diff s.
    \]
    Thus we can write that
    \[
        \int_0^{S_1} \frac{1}{1-Y_1(s)} \diff s = \int_0^{S_1}
        \frac{D(s)}{D(s)-1} \diff s \overset{\mathrm{(d)}}{=}
        \int_0^{S_1} \frac{W_1([D,D]_s)}{W_1([D,D]_s)-1} \diff s =
        \int_0^{T_1} \frac{1}{(W_1(s)-1)^2W_1(s)} \diff s
    \]
    where $W_1$ is a Brownian motion started at $1/Y_1(0)$, and
    $T_1$ is the first time when $W_1$ hits $1$.
    We now know from \Cref{lem:brownianIntegral} that this integral
    is a.s.\ infinite, and thus that $\tau_1$ goes continuously to
    infinity, and does not ``jump to infinity''.

    Further consider the times
    \[
        \forall i \ge 2,\; S_i = \inf \Set{t \ge 0 \suchthat X'_i(t) = 1}, 
        \quad S = \min(S_2, \dots, S_n).
    \]
    At time $S$, one of the families has reached fixation, and thus 
    for $t \ge S$ we have $X'_i(t) = X'_i(S)$. Therefore, for all
    $t \ge 0$, we have $X'_i(\tau_1(t)) = X'_i(\tau_1(t) \wedge S)$, where
    the stopping time $\tau_1(t) \wedge S$ is now a.s.\ finite, and
    $t \mapsto \tau_1(t) \wedge S$ is continuous. (The continuity
    requires that $\tau_1$ does not jump to infinity.)
    Thus, by making a time-change in the following integrals,
    see e.g.~\cite{kallenberg_foundations_2002}, Theorem~17.24, we obtain
    \begin{align*}
        \forall t \ge 0,\; X'_i(\tau_1(t))
        &= X'_i(\tau_1(t) \wedge S) \\
        &= \sum_{\substack{j = 2\\ j \ne i}}^n \int_0^{\tau_1(t) \wedge S}
        \sqrt{X'_i(s)X'_j(s)} \diff W_{i,j} \\
        &= \sum_{\substack{j = 2\\ j \ne i}}^n \int_0^{t}
        \sqrt{X'_i(\tau_1(s) \wedge S)X'_j(\tau_1(s) \wedge S)} \diff
        W_{i,j}(\tau_1(s) \wedge S) \\
        &= \sum_{\substack{j = 2\\ j \ne i}}^n \int_0^{t}
        \sqrt{\frac{X'_i(\tau_1(s))X'_j(\tau_1(s))}{1-Y_1(s)}} \diff \tilde{W}_{i,j}
    \end{align*}
    where 
    \[
        \forall t \ge 0,\; \tilde{W}_{i,j}(t) = \int_0^t \sqrt{1-Y_1(s)}
        \diff W_{i,j}(\tau_1(s) \wedge S).
    \]

    A direct computation of the quadratic variations gives 
    \[
        \forall i,j, t \ge 0,\; [\tilde{W}_{i,j}, \tilde{W}_{i,j}]_t = t \wedge S,
    \]
    and the crossed variations are null. Thus
    a multidimensional version of Dubins-Schwarz theorem, see e.g.\
    Theorem~18.4 in~\cite{kallenberg_foundations_2002},
    shows that we can find independent Brownian motions 
    $(\hat{W}_{i, j})_{i < j}$ such that $\tilde{W}_{i,j}(t) =
    \hat{W}_{i,j}(t \wedge S)$. This proves that the time-changed processes
    solve
    \[
        \forall t \ge 0,\; X'_i(\tau_1(t)) = \sum_{\substack{j = 2\\j \ne i}}^n
        \int_0^t \sqrt{\frac{X'_i(\tau_1(s)) X'_j(\tau_1(s))}{1-Y_1(s)}} \diff \hat{W}_{i,j}.
    \]

    A final application of It\^o's formula shows that the process 
    $(Z_1, \dots, Z_n)$ as defined above solves the same equation
    as $(X_1, \dots, X_n)$ conditioned on $\Set{\zeta_1 = \infty}$.
    This proves the result.
\end{proof}

We can now proceed inductively. Let us set up the notation for the
proof. Consider i.i.d.\ processes $(Y_1, \dots, Y_{n-1})$ such that 
\[
    \forall i \ge 1,\; \diff Y_i = 
    (1-Y_i) \diff t + \sqrt{Y_i(1-Y_i)} \diff W_i
\]
where $(W_1, \dots, W_{n-1})$ are independent Brownian motions. We
set $\tilde{Z}_1 = Y_1$, and 
\[
    \forall t \ge 0,\; \tilde{\tau}_1(t) = \int_0^t \frac{1}{1-\tilde{Z}_1(s)} \diff s.
\]
We then define recursively, for $i < n-1$,
\begin{gather*}
    \forall t \ge 0,\; \tilde{Z}_{i+1}(t) = 
    (1 - \tilde{Z}_1(t) - \dots - \tilde{Z}_i(t))  Y_{i+1}(\tilde{\tau}_i(t)) \\
    \forall t \ge 0,\; \tilde{\tau}_{i+1}(t) = 
    \int_0^t \frac{1}{1 - \tilde{Z}_1(s) - \dots - \tilde{Z}_{i+1}(s)} \diff s.
\end{gather*}
We finally set $\tilde{Z}_n = 1 - \tilde{Z}_1 - \dots - \tilde{Z}_{n-1}$.

\begin{proposition}
    The process $(\tilde{Z}_1, \dots, \tilde{Z}_n)$ defined above is distributed
    as a $n$-dimensional Wright-Fisher diffusion conditioned on 
    $\Set{\zeta_n < \dots < \zeta_1}$.
\end{proposition}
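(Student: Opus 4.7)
The plan is to proceed by induction on $n$, using \Cref{lem:firstConditioning} to peel off one conditioning at a time. The base case $n = 1$ is trivial, since then $\tilde Z_1 = 1$ a.s.\ and the Wright-Fisher diffusion is constantly equal to $1$.

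For the induction step, assume the statement holds in dimension $n-1$. First, I would apply \Cref{lem:firstConditioning}: it shows that a $n$-dimensional Wright-Fisher diffusion conditioned on $\{\zeta_1 = \infty\}$ has the same distribution as $(Z_1, \ldots, Z_n)$ with $Z_1 = Y_1$ and
\[
    Z_i(t) = (1 - Y_1(t))\, X'_i(\tau_1(t)), \quad i \ge 2,
\]
where $(X'_2, \ldots, X'_n)$ is an independent $(n-1)$-dimensional Wright-Fisher diffusion and $\tau_1$ is built from $Y_1$ as in \Cref{SS:WFconditioned}. Denote by $\zeta'_i$ the extinction time of $X'_i$. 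Since $\tau_1$ is continuous and strictly increasing with continuous inverse, the extinction times of $Z_i$ for $i \ge 2$ are exactly $\tau_1^{-1}(\zeta'_i)$, so the event $\{\zeta_n < \cdots < \zeta_2\}$ coincides with $\{\zeta'_n < \cdots < \zeta'_2\}$. This event depends only on $(X'_2, \ldots, X'_n)$, so by independence, conditioning further on this event reduces to conditioning the auxiliary $(n-1)$-dimensional Wright-Fisher diffusion on the ordering of its extinction times, while $(Y_1, \tau_1)$ remains unchanged.

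Next, I would invoke the induction hypothesis applied to $(X'_2, \ldots, X'_n)$: there exist i.i.d.\ processes $Y_2, \ldots, Y_{n-1}$ independent of $Y_1$ satisfying the SDE of \Cref{SS:WFconditioned} such that $(X'_2, \ldots, X'_n) \mid \{\zeta'_n < \cdots < \zeta'_2\}$ is distributed as $(\tilde Z^\star_1, \ldots, \tilde Z^\star_{n-1})$, where the starred processes are built from $(Y_2, \ldots, Y_{n-1})$ by the same recursive procedure. Substituting this into the expression $Z_i = (1 - Y_1) X'_i \circ \tau_1$ yields a candidate representation for the fully conditioned process.

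The remaining—and main—task is to verify that this substitution matches the recursive definition of $(\tilde Z_1, \ldots, \tilde Z_n)$. I would check inductively on $i$ that
\[
    1 - \tilde Z_1(t) - \cdots - \tilde Z_i(t) = (1 - Y_1(t))\bigl(1 - \tilde Z^\star_1(\tau_1(t)) - \cdots - \tilde Z^\star_{i-1}(\tau_1(t))\bigr),
\]
and that the time-changes compose correctly, namely $\tilde\tau_i(t) = \tilde\tau^\star_{i-1}(\tau_1(t))$; both follow by a direct change of variable $s = \tau_1(u)$, using $\tau_1'(u) = 1/(1 - Y_1(u))$. Granting these identities, $\tilde Z_{i+1}(t) = (1 - \tilde Z_1(t) - \cdots - \tilde Z_i(t)) Y_{i+1}(\tilde\tau_i(t))$ matches $(1 - Y_1(t)) X'_{i+1}(\tau_1(t))$, closing the induction. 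The delicate point is checking that $\tilde\tau_1$ is a.s.\ continuous up to its explosion—this is exactly the content of \Cref{lem:brownianIntegral} and was used in the proof of \Cref{lem:firstConditioning}, so the composition $\tilde\tau^\star_{i-1} \circ \tau_1$ is well-defined as a continuous non-decreasing function into $[0, \infty]$ and the change of variable is justified. No further analytical subtlety arises; the main work is the algebraic and time-change bookkeeping sketched above.
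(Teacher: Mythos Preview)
Your proposal is correct and follows essentially the same route as the paper: induction on $n$, using \Cref{lem:firstConditioning} to peel off the conditioning on $\{\zeta_1=\infty\}$, identifying the remaining event $\{\zeta_n<\dots<\zeta_2\}$ with the extinction-order event for the auxiliary $(n-1)$-dimensional diffusion, invoking the induction hypothesis, and then checking the time-change identity $\tilde\tau_i(t)=\tilde\tau'_i(\tilde\tau_1(t))$. The only cosmetic difference is that the paper starts its induction at $n=2$ rather than the degenerate case $n=1$, and it states the composition identity for $\tilde\tau_i$ without writing out the change-of-variable details you sketch.
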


\begin{proof}
    We prove the result inductively. For $n = 2$, conditioning $(X_1, X_2)$
    on its extinction order amounts to conditioning it on the fixation of $X_1$,
    and \Cref{lem:firstConditioning} shows that the result holds.

    Let $(Y_1, \dots, Y_{n-1})$ be the i.i.d.\ diffusions defined above. We
    first define
    \[
        \forall t \ge 0,\; \tilde{Z}'_2(t) = Y_2(t),\quad
        \forall t \ge 0,\; \tau'_2(t) = \int_0^t \frac{1}{1-\tilde{Z}'_2(s)} \diff s
    \]
    and then define inductively, for $i < n-1$,
    \begin{gather*}
        \forall t \ge 0,\; \tilde{Z}'_{i+1}(t) =
        (1-\tilde{Z}'_2(t)-\dots-\tilde{Z}'_i(t))
        Y_{i+1}(\tilde{\tau}_i'(t)), \\
        \forall t \ge 0,\; \tilde{\tau}'_{i+1}(t) = 
        \int_0^t \frac{1}{1 - \tilde{Z}'_2(s) - \dots
        -\tilde{Z}'_{i+1}(s)} \diff s,
    \end{gather*}
    and $\tilde{Z}'_n = 1 - \tilde{Z}'_2 - \dots - \tilde{Z}'_{n-1}$.
    By induction, we can suppose that $(\tilde{Z}'_2, \dots, \tilde{Z}'_n)$
    is distributed as a $(n-1)$-dimensional Wright-Fisher diffusion
    conditioned on its extinction order. We first claim that the process
    defined as 
    \begin{gather*}
        \forall t \ge 0,\; \tilde{Z}_1(t) = Y_1(t),\\
        \forall i > 1, \forall t \ge 0,\; \tilde{Z}_i(t) = (1-\tilde{Z}_1(t))
        \tilde{Z}'_i(\tilde{\tau}_1(t))
    \end{gather*}
    is distributed as a $n$-dimensional Wright-Fisher diffusion conditioned
    on its extinction order.

    To see this, let $(X_2, \dots, X_n)$ be a $(n-1)$-dimensional
    unconditioned Wright-Fisher diffusion, independent of $Y_1$, and 
    recall the definition of $(Z_1, \dots, Z_n)$ from
    \Cref{lem:firstConditioning}. Consider
    \[
        \zeta'_i = \inf \Set{t \ge 0 \suchthat Z_i(t) = 0},\quad
        \zeta_i = \inf \Set{t \ge 0 \suchthat X_i(t) = 0}
    \]
    the extinction times of $Z_i$ and $X_i$.
    Lemma~\ref{lem:firstConditioning} ensures that $(Z_1, \dots, Z_n)$
    is distributed as a Wright-Fisher diffusion conditioned on 
    the fixation of $Z_1$. Thus, the process $(Z_1, \dots, Z_n)$
    further conditioned on $\Set{\zeta'_n < \dots < \zeta'_2}$ has the 
    distribution of a Wright-Fisher diffusion conditioned on its
    extinction order. Now notice that 
    \[
        \Set{\zeta'_n < \dots < \zeta'_2} = \Set{\zeta_n < \dots < \zeta_2}.
    \]
    Thus conditioning $(Z_1, \dots, Z_n)$ on $\Set{\zeta'_n < \dots
    \zeta'_2}$ amounts to conditioning $(X_2, \dots, X_n)$ on 
    $\Set{\zeta_n < \dots < \zeta_2}$, that is, conditioning it
    on its fixation order. As $\Set{\zeta_n < \dots < \zeta_2}$ is
    independent of $Z_1$, conditioning the process $(Z_1, \dots, Z_n)$
    on this event is equivalent to replacing $(X_2, \dots, X_n)$ by
    $(\tilde{Z}'_2, \dots, \tilde{Z}'_n)$ in the construction of $(Z_1,
    \dots, Z_n)$, and this proves the claim.

    \medskip

    It only remains to show that $\tilde{Z}_{i+1}$ as defined in the proof can
    be written
    \[
        \forall i > 1,\; \tilde{Z}_{i+1}(t) = (1 - \tilde{Z}_1(t) - \dots -
        \tilde{Z}_i(t)) Y_i(\tilde{\tau}_i(t)).
    \]
    A direct calculation first shows that
    \[
        \forall i > 1,\; \forall t \ge 0,\; \tilde{\tau}_i(t) =
        \tilde{\tau}'_i(\tilde{\tau}_1(t))
    \]
    and the result follows.
\end{proof}

We end this section by pointing out the following fact
that will be required in the next section.
We have only defined the Wright-Fisher diffusion conditioned on
its extinction order for an initial condition $(x_1, \dots, x_n)$
such that for all $1 \le i \le n$, $x_i > 0$. Nevertheless, the processes 
$Y_i$ have an entrance boundary at $0$. Thus there exists a unique extension
of the process $(Y_1, \dots, Y_{n-1})$ started from $(0, \dots, 0)$
that remains Feller, see e.g.~\cite{kallenberg_foundations_2002},
Chapter~23. This
shows that a Wright-Fisher diffusion conditioned on its fixation order
$(\tilde{Z}_1, \dots, \tilde{Z}_n)$ admits a Feller extension for the
initial condition $(0, \dots, 0, 1)$.

\subsection{Proof of \texorpdfstring{\Cref{Thm:frequencies}}{Theorem~\ref{Thm:frequencies}}}
    \label{SS:evesProof}

Let $(\rho_t)_{t \ge 0}$ be a Fleming-Viot process, and let
$(\e_i)_{i \ge 1}$ be its Eves. In this section we end the 
proof of \Cref{Thm:frequencies} by showing that the distribution
of the sequence of processes 
$(\rho_t(\Set{\e_1}), \rho_t(\Set{\e_2}), \dots;\, t \ge 0)$
is that of a Wright-Fisher diffusion conditioned on its fixation
order.

The result we want to prove is the direct extension of Theorem~4 
of~\cite{bertoin_stochastic_2003}. Reformulated in our setting, 
this theorem proves that $(\rho_t(\Set{\e_1});\, t \ge 0)$ is 
distributed as the solution to \cref{eq:hTransform}
started from $0$. We now give a similar representation for 
the process 
$(\rho_t(\Set{\e_1}), \dots, \rho_t(\Set{\e_n});\, t \ge 0)$
giving the size of the progeny of the first $n$ Eves.
\begin{proposition}
    Let $(\rho_t)_{t \ge 0}$ be a Fleming-Viot process, and 
    $(\e_i)_{i \ge 1}$ be its Eves. Then for any $n \ge 1$,
    the process $(\rho_t(\Set{\e_1}), \dots, \rho_t(\Set{\e_n});\, t \ge 0)$
    is distributed as $(\tilde{Z}_1, \dots, \tilde{Z}_n)$ where
    $(\tilde{Z}_1, \dots, \tilde{Z}_{n+1})$ is a $(n+1)$-dimensional
    Wright-Fisher diffusion conditioned on its extinction order,
    started from $(0, \dots, 0, 1)$.
\end{proposition}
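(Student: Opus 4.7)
The plan is to proceed by induction on $n$. The base case $n = 1$ is precisely Theorem~4 of \cite{bertoin_stochastic_2003}, which identifies $(\rho_t(\Set{\e_1}))_{t \ge 0}$ as the Wright-Fisher diffusion conditioned on fixation started from $0$, i.e., as $Y_1 = \tilde Z_1$ (the first component of a $2$-dimensional conditioned Wright-Fisher diffusion started from $(0,1)$).

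For the inductive step, assume the statement holds for $n-1$ and fix a Fleming-Viot process $(\rho_t)_{t \ge 0}$ with Eves $(\e_i)_{i \ge 1}$. The strategy is to peel off the first Eve and reduce to a lower-dimensional Fleming-Viot. Let $S_1 = \inf\Set{t \ge 0 \suchthat \rho_t(\Set{\e_1}) = 1}$, and for $t < S_1$ define the renormalized measure
\[
    \sigma_t = \frac{1}{1-\rho_t(\Set{\e_1})}\bigl(\rho_t - \rho_t(\Set{\e_1}) \delta_{\e_1}\bigr),
    \qquad \tau_1(t) = \int_0^t \frac{\diff s}{1-\rho_s(\Set{\e_1})}.
\]
The crux of the argument is to show that the time-changed, renormalized process $\big(\sigma_{\tau_1^{-1}(u)}\big)_{u \ge 0}$ is a Fleming-Viot process, independent of $(\rho_t(\Set{\e_1}))_{t \ge 0}$. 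I would establish this directly from the flow-of-bridges construction: if $(B_{s,t})_{s \le t}$ is the underlying standard flow of bridges, then removing the jump of $B_{-t,0}$ at the location of $\e_1$ and renormalizing the resulting function on $[0,1]$ yields, under the time-change $\tau_1$, another flow of standard bridges that is independent of the mass process $\rho_t(\Set{\e_1})$. Alternatively, an It\^o-level proof analogous to the Doob's $h$-transform calculation in \Cref{lem:firstConditioning} (with harmonic function $h(\mu) = \mu(\Set{\e_1})$) would give the same conclusion by direct SDE computation.

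Once this conditional Fleming-Viot decomposition is in hand, one identifies the Eves of $(\sigma_{\tau_1^{-1}(u)})_{u \ge 0}$ as exactly $\e_2, \e_3, \dots$, in the same order: for $x \ne \e_1$ and $t < S_1$ one has $\sigma_t(\Set{x}) = 0 \iff \rho_t(\Set{x}) = 0$, and the time-change $\tau_1$ is strictly increasing and maps $S_1$ to $\infty$ (as shown in the course of the proof of \Cref{lem:firstConditioning} via Lamperti's representation), so the decreasing order of extinction times of $\e_2, \e_3, \dots$ is preserved. Applying the inductive hypothesis to $(\sigma_{\tau_1^{-1}(u)})_{u \ge 0}$ yields that $\big(\sigma_{\tau_1^{-1}(u)}(\Set{\e_i});\, u \ge 0, 2 \le i \le n\big)$ is distributed as the first $n-1$ components of an $n$-dimensional Wright-Fisher diffusion conditioned on its extinction order, started from $(0,\dots,0,1)$, and independent of $(\rho_t(\Set{\e_1}))$. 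Translating back via $\rho_t(\Set{\e_i}) = (1-\rho_t(\Set{\e_1})) \sigma_t(\Set{\e_i})$ for $i \ge 2$, and matching $\rho_t(\Set{\e_1})$ with $\tilde Z_1 = Y_1$ (and hence $\tau_1$ with $\tilde\tau_1$), gives exactly the recursive construction $\tilde Z_i(t) = (1-\tilde Z_1(t))\tilde Z'_i(\tilde\tau_1(t))$ used in the alternative formulation in the proof of the preceding proposition. This identifies $(\rho_t(\Set{\e_1}), \dots, \rho_t(\Set{\e_n}))$ with $(\tilde Z_1, \dots, \tilde Z_n)$ and closes the induction.

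The main obstacle is the conditional Fleming-Viot decomposition: rigorously showing that renormalizing $\rho_t$ on the complement of $\Set{\e_1}$ and time-changing by $\tau_1$ produces an independent standard Fleming-Viot process. The flow-of-bridges argument is geometrically natural but requires one to verify the three defining properties of a flow of bridges for the rescaled family, while the It\^o/$h$-transform route requires dealing with a measure-valued SDE; in either case one must handle the explosion time of $\tau_1$ carefully, as in \Cref{lem:firstConditioning}. Once this step is established, the remaining identifications (Eves, SDEs, independence) follow routinely from the inductive hypothesis.
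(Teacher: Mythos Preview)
Your inductive approach is genuinely different from the paper's proof. The paper does not peel off Eves one at a time at the measure-valued level; instead it discretizes space. Concretely, it approximates $\rho_t(\Set{\e_i})$ by $\rho_t\big(\OCInterval{\frac{\Floor{m\e_i}}{m},\frac{\Floor{m\e_i}+1}{m}}\big)$, uses exchangeability of bridge increments to identify the joint law of $n$ such interval masses with the first $n$ coordinates of an $(n+1)$-dimensional Wright--Fisher diffusion, observes that conditioning this finite-dimensional diffusion on its extinction order is the same as conditioning on $\e_k$ landing in the $k$-th interval, and then lets $m\to\infty$ using the Feller extension of the conditioned diffusion at the boundary point $(0,\dots,0,1)$. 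The whole argument therefore reduces to the already-proved finite-dimensional \Cref{lem:firstConditioning} and the subsequent proposition, with no need for any measure-valued decomposition.

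Your route is conceptually natural---and indeed the paper's closing discussion sketches exactly this picture via the Dawson--Watanabe/branching-property heuristic---but as you yourself flag, the crux (that removing the mass of $\e_1$, renormalizing, and time-changing by $\tau_1$ yields a standard Fleming--Viot process independent of $(\rho_t(\Set{\e_1}))_{t\ge 0}$) is not proved in your proposal. Neither of your two suggested routes is routine: the flow-of-bridges route must contend with the fact that $\e_1$ is a random functional of the entire flow, so independence of the ``residual'' flow from the removed jump is not immediate from the increment-independence of bridges; the It\^o route requires an $h$-transform computation for the measure-valued martingale problem rather than a finite-dimensional SDE. The paper sidesteps all of this by pushing the conditioning down to a finite-dimensional projection where \Cref{lem:firstConditioning} applies directly. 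In short: your strategy is sound in spirit and would give a more intrinsic proof (and one that, as the paper notes, might generalize to Beta-coalescents), but as written it leaves the hardest step open, whereas the paper's discretization argument is complete and self-contained.
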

\begin{proof}
    We realize a similar computation as in the proof of Theorem~4
    of~\cite{bertoin_stochastic_2003}. The proof requires three facts.
    First notice that
    \[
        \lim_{m \to \infty} \rho_t\big(\OCInterval[\big]{\textstyle\frac{\Floor{m \e_i}}{m}, 
        \frac{\Floor{m \e_i + 1}}{m}}\big) = \rho_t(\Set{\e_i}).
    \]
    Then, if $I_1, \dots, I_n$ are $n$ disjoint intervals of length
    $1/m$, due to exchangeability of the increments of 
    bridges, the process $(\rho_t(I_1), \dots, \rho_t(I_n);\, t \ge 0)$ is 
    distributed as the process
    \[
        \textstyle\Big(\rho_t\big(\OCInterval{0, \frac{1}{m}}\big), \dots,
        \rho_t\big(\OCInterval{\frac{n-1}{m}, \frac{n}{m}}\big);\, t \ge 0\Big)
    \]
    which is distributed as the $n$ first coordinates of a
    $(n+1)$-dimensional Wright-Fisher diffusion started from 
    $(\tfrac{1}{m}, \dots, \tfrac{1}{m}, 1-\tfrac{n}{m})$.
    
    Finally, notice that on the event $\Set{\forall i \ne j \in \Set{1, \dots, n},\; \Floor{m \e_i}
    \ne \Floor{m \e_j}}$, conditioning the process
    \[
        \textstyle\Big(\rho_t\big(\OCInterval{0, \frac{1}{m}}\big), \dots,
        \rho_t\big(\OCInterval{\frac{n-1}{m}, \frac{n}{m}}\big);\, t \ge 0\Big)
    \]
    on its extinction order as in \Cref{SS:WFconditioned} is
    equivalent to conditioning it on the location of the 
    Eves, i.e., on the event
    $\Set{\forall k \in \Set{1, \dots, n},\; \e_k \in
    \OCInterval{\frac{k-1}{m}, \frac{k}{m}}}$.

    We can now proceed to the calculation.
    Let $0 \le t_1 < \dots < t_p$ and let $\phi_1, \dots, \phi_p$ be
    continuous bounded functions. Consider $(\tilde{Z}_1, \dots, \tilde{Z}_{n+1})$
    a $(n+1)$-dimensional Wright-Fisher diffusion conditioned on 
    its extinction order. Then
    \begin{align*}
        &\E \Big[
                \phi_1 \big(
                    \rho_{t_1}(\Set{\e_1}), 
                    \dots, \rho_{t_1}(\Set{\e_n}) \big) 
                \dots 
                \phi_p \big(
                    \rho_{t_p}(\Set{\e_1}), 
                    \dots, \rho_{t_p}(\Set{\e_n}) \big) 
            \Big]\\
        &= \lim_{m \to \infty} \sum_{i_1=0}^{m-1} \dots \sum_{i_n=0}^{m-1}
        \textstyle \E \Big[
            \phi_1 \big(
                \rho_{t_1}\big(\OCInterval{\frac{i_1}{m}, \frac{i_1+1}{m}}\big), 
                \dots, 
                \rho_{t_1}\big(\OCInterval{\frac{i_n}{m}, \frac{i_n+1}{m}}\big)
                \big)
            \dots \\
            &\textstyle\qquad \qquad\qquad\qquad\quad\;\;\,
            \phi_p \big(
                \rho_{t_p}\big(\OCInterval{\frac{i_1}{m}, \frac{i_1+1}{m}}\big), 
                \dots, 
                \rho_{t_p}\big(\OCInterval{\frac{i_n}{m}, \frac{i_n+1}{m}}\big)
                \big)
            \Indic{\forall k \in \Set{1, \dots, n},\; \e_k \in
                   \OCInterval{\frac{i_k}{m}, \frac{i_k+1}{m}}}
        \Big]\\
        &= \lim_{m \to \infty} m^n
        \textstyle \E \Big[
            \phi_1\big(\rho_{t_1}\big(\OCInterval{0, \frac{1}{m}}\big), \dots, 
                   \rho_{t_1}\big(\OCInterval{\frac{n-1}{m}, \frac{n}{m}}\big)\big) 
            \dots\\
            &\textstyle\quad\qquad\qquad\quad
            \phi_p\big(\rho_{t_p}\big(\OCInterval{0, \frac{1}{m}}\big), \dots, 
                   \rho_{t_p}\big(\OCInterval{\frac{n-1}{m}, \frac{n}{m}}\big)\big) 
            \Indic{\forall k \in \Set{1, \dots, n},\; \e_k \in
                \OCInterval{\frac{k-1}{m}, \frac{k}{m}}}
        \Big]\\
        &= \lim_{m \to \infty} 
        \textstyle \E \Big[
            \phi_1\big(\tilde{Z}_1(t_1), \dots, \tilde{Z}_n(t_1)\big) 
            \dots
            \phi_p\big(\tilde{Z}_1(t_p), \dots, \tilde{Z}_n(t_p)\big) 
            \;|\; \tilde{Z}_1(0) = \dots = \tilde{Z}_n(0) = \frac{1}{m}
        \Big]\\
        &= 
        \E \Big[
            \phi_1\big(\tilde{Z}_1(t_1), \dots, \tilde{Z}_n(t_1)\big) 
            \dots
            \phi_p\big(\tilde{Z}_1(t_p), \dots, \tilde{Z}_n(t_p)\big) 
            \;|\; \tilde{Z}_1(0) = \dots = \tilde{Z}_n(0) = 0
        \Big],
    \end{align*}
    where, the last line comes from the Feller property of the process
    $(\tilde{Z}_1, \dots, \tilde{Z}_{n+1})$.
\end{proof}

Our current proof of \Cref{Thm:frequencies} relies on calculations
specific to the Wright-Fisher diffusion. We end this section by
discussing a potential alternative proof of this result that would more
easily generalize to Beta-coalescents. 

The Feller branching diffusion describes the size of a population 
where different individuals die and reproduce independently. Similarly to the 
Fleming-Viot process, it is possible to define a measure-valued process,
called the Dawson-Watanabe process, that encodes the size of the
offspring of each individual in the initial population, 
see e.g.~\cite{etheridge_2000}. (Note that there are no mutations here,
i.e., no spatial motion of the particles.) Its total mass is then
distributed as a Feller diffusion. Starting from a Dawson-Watanabe
process, one can renormalize it by its total mass to obtain a process
valued in the space of probability measures. Then the resulting renormalized
process is distributed as a time-changed Fleming-Viot process, 
see~\cite{birkner_alpha_2005}.

Let us now discuss the results of \Cref{SS:WFconditioned} in the light of
this new construction. The key point of \Cref{SS:WFconditioned} is that 
after removing one family from a Fleming-Viot process and renormalizing
the remainder of the population to have mass one, the resulting process
remains distributed as an independent time-changed Fleming-Viot process.
Suppose that the Fleming-Viot process has been obtained by renormalizing
a Dawson-Watanabe process. Then removing a family from the Fleming-Viot
process amounts to removing a family from the original Dawson-Watanabe
process. By the branching property, removing this family does not change
the distribution of the remainder of the population, which remains
distributed as an independent Dawson-Watanabe process. Thus when
renormalizing the remainder of the population to have size one, we obtain
a new time-changed Fleming-Viot process, independent of the removed
family. In other words, the results of \Cref{SS:WFconditioned}
essentially originate from the fact that the Fleming-Viot process can be
seen as a renormalized branching measure-valued process.

A similar link has been obtained in~\cite{birkner_alpha_2005} between 
the $\Lambda$-Fleming-Viot processes associated to Beta-coalescents and a
family of $\alpha$-stable measure-valued branching processes. Thus we believe
that one could derive a similar, but less explicit, representation of the
asymptotic frequencies of the stationary distribution of the
Beta-coalescents with erosion than the one obtained in \Cref{Thm:frequencies}.

\bibliography{kingman_erosion}

\appendix

\section{Proof of \texorpdfstring{\Cref{prop:ancestralMarkov}}{Proposition~\ref{prop:ancestralMarkov}}} \label{A:counting}

In this section, we prove that the ancestral process of Kingman's
coalescent with immigration is Markovian. To do this, consider a version
of Kingman's coalescent with immigration $(\bar{\Pi}_t)_{t \in \R}$, and
let $(\bar\Pi_i)_{i \in \Z}$ be its embedded chain, i.e., the sequence of
states visited by $(\bar{\Pi}_t)_{t \in \R}$, where $\bar{\Pi}_0$ is the
state at time $t = 0$.  We count the number of trajectories of
$(\bar{\Pi}_i)_{i \in \Z}$ that produce a given trajectory of $(\Ai_i)_{i \ge 0}$, 
the embedded chain of $(\Ai_t)_{t \ge 0}$.

First, note that given the values of $(\bar{\Pi}_{-n}, \dots, \bar{\Pi}_0)$ and a
uniform permutation $\sigma$ of the blocks of $\bar{\Pi}_0$, one can uniquely
reconstruct the values of $(\Ai_0, \dots, \Ai_n)$. We now fix a sequence
$(a_0, \dots, a_n)$ of possible values of $(\Ai_0, \dots, \Ai_n)$, and
a partition $\bar\pi_{-n}$ with $\abs{a_n}$ blocks, where $\abs{a_n}$
is the total number of particles of $a_n$. Our first task is to 
count the number of trajectories of $(\bar{\Pi}_{-n}, \dots, \bar{\Pi}_0)$ starting
from $\bar{\pi}_{-n}$, and of labelings $\sigma$ of the blocks of $\bar{\Pi}_0$
such that $(\Ai_0, \dots, \Ai_n) = (a_0, \dots, a_n)$. Before stating
the result we need to introduce one notation. The variable $\Ai_{k+1}$
is obtained from $\Ai_k$ by splitting or killing one particle. Let us
denote $\ell_k$ the label of this particle. That is, $\ell_k$ is the 
unique integer such that
\[
    \abs{\Ai_{k+1}(\ell_k) - \Ai_{k}(\ell_k)} = 1, \quad
    \forall i \ne \ell_k,\; \abs{\Ai_{k+1}(i) - \Ai_{k}(i)} = 0.
\]

\begin{lemma} \label{lem:couting}
    Fix a sequence of states $(a_0, \dots, a_n)$ of $(\Ai_0, \dots,
    \Ai_n)$, and a partition $\bar{\pi}_{-n}$ of $\Set{i \in \Z \suchthat
    i \le -n}$ with $\Abs{a_n}$ blocks. Then the number of trajectories of
    $(\bar{\Pi}_{-n}, \dots, \bar{\Pi}_0)$ and labelings of the blocks of
    $\bar{\Pi}_0$ such that $(\Ai_0, \dots, \Ai_n) = (a_0, \dots, a_n)$
    and $\bar{\Pi}_{-n} = \bar{\pi}_{-n}$ is
    \[
        \frac{\Abs{a_n}!}{2^b} a_0(\ell_0) \dots a_{n-1}(\ell_{n-1}),
    \]
    where $b$ is the number of birth events along the sequence
    $(a_0, \dots, a_n)$.
\end{lemma}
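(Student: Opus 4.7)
The plan is to count the pairs (trajectory $(\bar{\Pi}_{-n}, \dots, \bar{\Pi}_0)$, labeling $\sigma$ of the blocks of $\bar{\Pi}_0$) by equivalently counting pairs (trajectory, type assignment on the blocks of $\bar{\pi}_{-n}$). Here a \emph{type} of a block of $\bar{\Pi}_{-k}$ is the element of $\Set{1, \dots, \abs{a_0}}$ labeling the block of $\bar{\Pi}_0$ to which it is an ancestor. Propagating types forward in real time from $\bar{\pi}_{-n}$ yields a typing of every block of the trajectory, and since $a_0(i) = 1$ for each $i$, the types at $\bar{\Pi}_0$ are in bijection with the labeling $\sigma$. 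This reduces the problem to a straightforward forward-in-real-time enumeration.

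I would first count the type assignments on the $\abs{a_n}$ blocks of $\bar{\pi}_{-n}$ consistent with the count constraints $a_n$: this is the multinomial coefficient $\abs{a_n}!/\prod_i a_n(i)!$. I would then evolve forward through the $n$ events. The real-time transition $\bar{\Pi}_{-(k+1)} \to \bar{\Pi}_{-k}$ corresponds to the $\Ai$-step $\Ai_k \to \Ai_{k+1}$. If this $\Ai$-step is a birth of type $\ell_k$, then forward in real time two blocks of type $\ell_k$ merge, contributing $\binom{a_{k+1}(\ell_k)}{2}$ choices; if it is a death, then forward in real time a new block (whose label is determined by the underlying Poisson process) immigrates with type forced to be $\ell_k$, contributing one choice. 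Multiplying, the total count becomes
\[
    \frac{\abs{a_n}!}{2^b \prod_i a_n(i)!} \prod_{k:\, \text{birth}} a_{k+1}(\ell_k)\bigl(a_{k+1}(\ell_k) - 1\bigr).
\]

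It then remains to establish the algebraic identity
\[
    \frac{1}{\prod_i a_n(i)!} \prod_{k:\, \text{birth}} a_{k+1}(\ell_k)\bigl(a_{k+1}(\ell_k) - 1\bigr) = \prod_{k=0}^{n-1} a_k(\ell_k).
\]
This identity factorizes over types: for each type $i$, let $c_0 = 1, c_1, \dots, c_{m_i}$ denote the successive values of $a_\cdot(i)$ at the events for which $\ell_\cdot = i$, so that $c_j - c_{j-1} = \pm 1$ and $c_{m_i} = a_n(i)$. Isolating the type-$i$ contributions, the identity reduces to
\[
    \prod_{j:\, c_j > c_{j-1}} c_j (c_j - 1) = c_{m_i}! \prod_{j=1}^{m_i} c_{j-1},
\]
which I would prove by induction on $m_i$: in the death case the new right-hand factor $c_{m_i-1}$ combines with $c_{m_i}!$ to yield $c_{m_i-1}!$ with no new left-hand factor; in the birth case $c_{m_i}!/c_{m_i-1}! = c_{m_i}$ pairs with the new right-hand factor $c_{m_i-1}$ to give $c_{m_i}(c_{m_i}-1)$, matching the new left-hand factor.

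The main obstacle is the bookkeeping: keeping track of types while reversing time between $\Ai$ and $\bar{\Pi}$, and correctly verifying that the initial type assignment on $\bar{\pi}_{-n}$ encodes $\sigma$ via forward propagation (in particular that immigration events contribute no combinatorial freedom, since both the Poisson-process label and the forced type $\ell_k$ are prescribed). Once this correspondence is set up and the events are indexed consistently, both the forward enumeration and the per-type inductive identity are elementary.
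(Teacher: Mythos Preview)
Your proof is correct but proceeds in the opposite direction from the paper. The paper encodes a trajectory as a forest and builds it from the roots (the blocks of $\bar{\Pi}_0$) going forward in $\Ai$-time: at step $k$ one of the $a_k(\ell_k)$ active nodes of tree $\ell_k$ is selected, and at the end the $\Abs{a_n}$ blocks of $\bar{\pi}_{-n}$ are matched to the remaining active nodes in $\Abs{a_n}!$ ways; a $2^b$ overcount from the indistinguishability of the two daughters at each birth is then divided out. The claimed formula thus drops out directly with no algebraic manipulation. You instead build from the leaves ($\bar{\pi}_{-n}$) forward in real time, which forces you to start with a multinomial $\Abs{a_n}!/\prod_i a_n(i)!$, collect $\binom{a_{k+1}(\ell_k)}{2}$ per coalescence, and then establish a separate per-type telescoping identity to recover $\prod_k a_k(\ell_k)$. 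Both routes are clean; the paper's is shorter because the factors $a_k(\ell_k)$ appear immediately, whereas yours trades the symmetry-correction step for an inductive identity. Your bijection between $(\text{trajectory},\sigma)$ and $(\text{typing on }\bar{\pi}_{-n},\text{merge choices})$ is sound, and the observation that immigrated blocks receive a forced type (so contribute no combinatorial freedom) is exactly what makes the forward-in-real-time count work.
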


\begin{proof}
    Each trajectory of $(\bar{\Pi}_{-n}, \dots, \bar{\Pi}_0)$ naturally
    encodes a forest that can be built as follows. Choose any labeling
    of the blocks of $\bar{\Pi}_{-n}$, and for each block add an initial leaf
    with the corresponding label. Suppose that the forest corresponding to 
    $(\bar{\Pi}_{-n}, \dots, \bar{\Pi}_{-k})$ has been built. If
    $\bar{\Pi}_{-k+1}$ is obtained from $\bar{\Pi}_{-k}$ by immigrating
    a new particle, then add a new isolated vertex. Otherwise, a
    coalescence event has occurred between two blocks of
    $\bar{\Pi}_{-k}$. Then add a new internal node and connect it to the
    nodes corresponding to the two blocks that have coalesced.
    Once the forest representing $(\bar{\Pi}_{-n}, \dots, \bar{\Pi}_0)$
    is built, by construction the nodes corresponding to $\bar{\Pi}_0$
    all belong to different trees. We set them to be the roots of their
    respective trees, and label them according to the partition 
    $\sigma$. (Notice that the resulting forest is endowed with some
    additional structure: the nodes added along the procedure are totally
    ordered by the induction step at which they have been added.)

    Counting trajectories of $(\bar{\Pi}_{-n}, \dots, \bar{\Pi}_0)$
    now amounts to counting forests. Instead of building the forests by
    starting from the leaves as above, we build a forest with ancestral
    sequence $(a_0, \dots, a_n)$ by starting from the roots. Initially,
    consider a set of $\Abs{a_0}$ roots, labeled by $\Set{1, \dots, \Abs{a_0}}$,
    that represent the particles of $a_0$. Nodes can be in two states:
    active or inactive. Active nodes represent the particles that are
    still alive in the population while inactive nodes represent the 
    dead particles. Initialy all roots are active. We build the forest 
    recursively. Suppose that at step $k$ we have built a forest such
    that for all $i$ there are $a_k(i)$ nodes that are active in the 
    tree with root $i$. If a particle with label $\ell_k$ has died from
    $a_k$ to $a_{k+1}$, we inactivate one of the nodes belonging to the 
    tree with root $\ell_k$. There are $a_k(\ell_k)$ such nodes.
    Similarly, if a particle has split from $a_k$ to $a_{k+1}$, we
    inactivate one node in the tree $\ell_k$, and connect it to two new
    active nodes. There are again $a_k(\ell_k)$ active nodes in the tree
    $\ell_k$. After step $n$, we have built a forest with ancestral
    sequence $(a_0, \dots, a_n)$. We assign the blocks of $\bar{\Pi}_{-n}$
    to the remaining active nodes of the forest by choosing one of
    the $\Abs{a_n}!$ permutations of the blocks.

    There are 
    \[
        \Abs{a_n}!\, a_0(\ell_0) \dots a_{n-1}(\ell_{n-1})
    \]
    outputs of the previous construction, and all forests
    with ancestral sequence $(a_0, \dots, a_n)$ can be obtained that way.
    However, due to symmetries, some forests can be obtained multiple
    times through this construction. More precisely, at each birth
    events, the two daughter nodes are indistinguishable. Interchanging
    the trees corresponding to the offspring of these two nodes yields
    the same forest. Thus, the actual number of forests with ancestral
    sequence $(a_0, \dots, a_n)$ is 
    \[
        \frac{\Abs{a_n}!}{2^b} a_0(\ell_0) \dots a_{n-1}(\ell_{n-1})
    \]
    where $b$ is the number of birth events, and the result is proved.
\end{proof}

\begin{lemma} \label{lem:reversible}
    Let $(M_t)_{t \in \R}$ be the process counting the number of 
    blocks of Kingman's coalescent with immigration. Then
    $(M_t)_{t \in \R}$ is a reversible process.
\end{lemma}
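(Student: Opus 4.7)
By Corollary~\ref{cor:immigrationBlockMarkov}, $(M_t)_{t \in \R}$ is a stationary continuous-time birth-death Markov chain on $\N$, with birth rate $q_{k,k+1} = d$ and death rate $q_{k,k-1} = k(k-1)/2$. Reversibility of a stationary Markov chain is equivalent to the detailed balance condition, so the goal reduces to proving
\[
    \pi_k \cdot d = \pi_{k+1} \cdot \frac{k(k+1)}{2}, \qquad k \ge 1,
\]
where $\pi$ denotes the stationary distribution of $(M_t)$.

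The plan is to exploit the general fact that every stationary birth-death process on $\N$ satisfies detailed balance. Concretely, I would rewrite the stationarity equation
\[
    \pi_k \Big( d + \frac{k(k-1)}{2} \Big) = \pi_{k-1} d + \pi_{k+1} \frac{k(k+1)}{2}
\]
in the telescoping form $J_k = J_{k-1}$, where $J_k := \pi_k d - \pi_{k+1}\,k(k+1)/2$ denotes the net probability current across the edge $\{k, k+1\}$. This shows that $J_k$ is independent of $k$. Since $\pi$ is a summable probability distribution on $\N$ and the birth rate $d$ is a constant, $\pi_k d \to 0$ as $k \to \infty$, which forces $J_k \equiv 0$; this is exactly detailed balance.

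As an alternative route, one can solve the recursion $\pi_{k+1} = \frac{2d}{k(k+1)} \pi_k$ explicitly, producing the candidate $\pi_k \propto (2d)^{k-1}/((k-1)!\,k!)$, check summability (the normalization is expressible in terms of a modified Bessel function), and verify directly that this $\pi$ satisfies the balance equations, hence is the unique stationary distribution. Either approach yields detailed balance, and thus reversibility. Since the result is a standard property of birth-death chains and nothing delicate arises, there is no substantive obstacle; the only point requiring mild care is the argument that the probability current vanishes at infinity, which relies on summability of $\pi$ and boundedness of the birth rate.
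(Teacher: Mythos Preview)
Your proposal is correct, and your alternative route---solving the recursion to obtain $\pi_k \propto (2d)^{k-1}/((k-1)!\,k!)$ and then checking detailed balance directly---is exactly what the paper does.

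Your primary route, via the general fact that stationary birth--death chains on $\N$ are reversible, is a slightly more conceptual variant. One small point: the argument that $J_k\equiv 0$ is cleaner if you use the left boundary rather than behaviour at infinity. At $k=1$ the death rate $k(k-1)/2$ vanishes, so the balance equation there reads $\pi_1 d = \pi_2$, i.e.\ $J_1=0$; the telescoping $J_k=J_{k-1}$ then propagates this to all $k$. Your stated argument (that $\pi_k d\to 0$ forces $J_k\equiv 0$) only yields $J\le 0$ as written, since from $J_k=J$ constant one gets $\pi_{k+1}k(k+1)/2 = \pi_k d - J \to -J \ge 0$; you would need a second inequality to conclude $J=0$. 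Using the boundary condition avoids this and is the standard way the argument is phrased.
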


\begin{proof}
    Let us compute the stationary distribution of $(M_t)_{t \in \R}$.
    As $(M_t)_{t \in \R}$ jumps from $k$ to $k+1$ at rate $d$ and 
    from $k$ to $k-1$ at rate $k(k-1)/2$, a usual calculation
    shows that its stationary distribution $(\nu_k)_{k \ge 1}$
    is
    \[
        \forall k \ge 1,\; \nu_k \propto \frac{(2d)^k}{k!\,(k-1)!}
    \]
    where the renormalization constant is obtained by summing over
    all the terms.
    Thus a direct calculation now proves that $(\nu_k)_{k \ge 1}$
    fulfills the detailed balance equation
    \[
        \forall k \ge 1,\; d \nu_k = \frac{k(k+1)}{2} \nu_{k+1}
    \]
    and thus that $(M_t)_{t \in \R}$ is reversible.
\end{proof}

We are now ready to prove \Cref{prop:ancestralMarkov}

\begin{proof}[Proof of \Cref{prop:ancestralMarkov}]
    Recall the notations from \Cref{SS:ancestral}. As proved in
    Lemma~\ref{lem:reversible}, the 
    process $(M_t)_{t \in \R}$ that counts the number of the blocks
    of Kingman's coalescent with immigration is a reversible 
    Markov process. Thus, the process $(N_t)_{t \ge 0}$ that gives the 
    number of particles of $(\Ai_t)_{t \ge 0}$ is a stationary process
    jumping from $k$ to $k+1$ at rate $d$, and from $k$ to $k-1$ at
    rate $k(k-1)/2$. Hence, the result is proved if we show that 
    conditional on $(N_0, \dots, N_n)$, the type of the particle that
    dies or splits from $\Ai_k$ to $\Ai_{k+1}$ is chosen with a
    probability proportional to the vector $\Ai_k$. We have
    \begin{align*}
        \Prob{\Ai_0 = a_0, \dots, \Ai_n = a_n}
        = \sum_{(\bar{\pi}_{-n}, \dots, \bar{\pi}_{0})} \sum_s
        \Prob{\forall i < n,\; \bar{\Pi}_{-i} = \bar{\pi}_{-i},\; \sigma = s \given \bar{\Pi}_{-n} = \bar{\pi}_{-n}}
        \Prob{\bar{\Pi}_{-n} = \bar{\pi}_{-n}}
    \end{align*}
    where the sum is taken over all partitions $\bar{\pi}_{-n}$ of 
    $\Set{i \in \Z \suchthat i \le -n}$ with $\abs{a_0}$ blocks, 
    all trajectories $(\bar{\pi}_{-n+1}, \dots, \bar{\pi}_0)$ and
    labelings $s$ of the blocks of $\bar{\pi}_0$ such that 
    $(\Ai_0, \dots, \Ai_n) = (a_0, \dots, a_n)$. Now notice that 
    the probability of seeing such a trajectory and labeling does
    only depend on the sequence of number of blocks $(\abs{a_0}, \dots,
    \abs{a_n})$. Indeed we have
    \begin{align*}
        \Prob{\forall i < n,\; \bar{\Pi}_{-i} = \bar{\pi}_{-i},\; \sigma = s \given \bar{\Pi}_{-n} = \bar{\pi}_{-n}} 
         = \frac{1}{\abs{a_0}!} \prod_{i = 0}^{n-1}
         \frac{d \Indic{\abs{a_{i+1}} - \abs{a_i} = -1} +
             \Indic{\abs{a_{i+1}}-\abs{a_i} = 1}}{d +
         \abs{a_{i+1}}(\abs{a_{i+1}}-1)/2}.
    \end{align*}
    Thus the probability of the event $\Set{\Ai_0 = a_0, \dots, \Ai_n=a_n}$
    is proportional to the number of terms in the sum, and thus 
    to the number of trajectories of $(\bar{\Pi}_{-n}, \dots,
    \bar{\Pi}_0)$ that correspond to this ancestral sequence.
    Hence, Lemma~\ref{lem:couting} shows that 
    \[
        \Prob{\Ai_0 = a_0, \dots, \Ai_n = a_n} \propto a_0(\ell_0) \dots
        a_{n-1}(\ell_{n-1}),
    \]
    where the coefficient only depends on $(\abs{a_0}, \dots,
    \abs{a_n})$. This proves the result.
\end{proof}

Let us end this section by discussing a possible extension to 
$\Lambda$-coalescents. The key point here is that conditionally
on the block counting process, the particles that die or split
are chosen uniformly in the population. This is a consequence of
1) \Cref{lem:couting} and 2) the fact that all trajectories with
a given sequence of number of blocks have the same probability.
The second point is a 
consequence of exchangeability so remains valid for $\Lambda$-coalescents. As for \Cref{lem:couting},
the proof could be easily adapted to $\Lambda$-coalescents
with immigration. (The factor $2^b$ should be replaced by
the product of the number of blocks involved in coalescence
events.)

Thus, the only difference between Kingman's coalescent with 
immigration and more general $\Lambda$-coalescents with immigration
is that the block counting process is no longer reversible.
Hence we cannot obtain a closed form for the transition rates
of the corresponding ancestral processes. Nevertheless, we believe
that in some cases it should be possible to obtain a result similar
to \Cref{prop:sizeErosion} by using the same techniques as
in this paper, if one can derive a good enough approximation for 
the stationary distribution of the number of blocks.

\end{document}